\documentclass[12pt,reqno]{amsart}

\usepackage[utf8]{inputenc}
\usepackage[T1]{fontenc}
\usepackage[svgnames,hyperref]{xcolor}
\usepackage{lmodern,amssymb,bbm}
\usepackage[UKenglish,USenglish,english,american]{babel}
\usepackage[left=3cm,right=3cm,top=3cm,bottom=3cm]{geometry}
\usepackage[cmtip]{xy}
\xyoption{pdf}
\xyoption{color}
\xyoption{all}

\usepackage[shortlabels]{enumitem}
\setlist[enumerate]{label=\rm{(\arabic*)}}
\setlist[enumerate,2]{label=\rm({\it\roman*})}
\setlist[itemize]{label=\raisebox{0.25ex}{\tiny$\bullet$}}

\theoremstyle{plain}    
 \newtheorem{thm}{Theorem}[section]
 \numberwithin{equation}{section} 
 \numberwithin{figure}{section} 
 \theoremstyle{plain}
 \theoremstyle{plain}    
 \newtheorem{cor}[thm]{Corollary} 
 \theoremstyle{plain}    
 \newtheorem{prop}[thm]{Proposition} 
 \theoremstyle{plain}    
 \newtheorem{lem}[thm]{Lemma} 
 \theoremstyle{remark}
 
 \theoremstyle{definition}

\newtheorem{notation}[thm]{Notation}

\newtheorem{thmA}{Theorem}

\theoremstyle{definition}
\newtheorem{defi}[thm]{Definition}

\newcommand{\C}{{\mathbb{C}}}

\newcommand{\R}{{\mathbb{R}}}






\newcommand{\e}{\varepsilon}

\newcommand{\f}{\varphi}

\newcommand{\dt}{\partial_t}



\DeclareMathOperator{\Sing}{Sing}

\DeclareMathOperator{\Amp}{Amp}

\newcommand{\mycolor}{Navy}
\usepackage[pdfauthor={Dat T. T\^O}, colorlinks, linktocpage, citecolor = \mycolor, linkcolor = \mycolor, urlcolor = \mycolor]{hyperref}
\usepackage[all]{hypcap} 

\title[Convergence of the  weak K\"ahler-Ricci Flow]{Convergence of the weak K\"ahler-Ricci Flow on  manifolds of general type}
\author[T. D. T\^O]{Tat Dat T\^O}
\date{\today}
\address{Ecole Nationale de l'Aviation Civile, Unversit\'e  de Toulouse\\
7, Avenue Edouard Belin\\
FR-31055 Toulouse Cedex 04, France}

\address{Institut Math\'ematiques de Toulouse \\ Universit{\'e} de Toulouse, CNRS, UPS
\\ 31062 Toulouse Cedex 09\\ France (Associated Researcher).}

\email{tat-dat.to@enac.fr, tat-dat.to@math.univ-toulouse.fr}

\begin{document}
\begin{abstract}  
We study the  K\"ahler-Ricci flow on compact K\"ahler manifolds whose canonical bundle is big. We show that  the normalized K\"ahler-Ricci flow has long time existence in the viscosity sense, is continuous in a  Zariski open set, and converges to the unique singular K\"ahler-Einstein metric in the canonical class.  The key ingredient is  a viscosity theory for degenerate complex Monge-Amp\`ere flows in  big classes that we develop, extending and refining the approach of Eyssidieux-Guedj-Zeriahi. 
\end{abstract}
\maketitle

\section*{Introduction}
%
Let $(X,\omega)$ be a compact K\"ahler manifold  of general type, i.e  the canonical bundle $K_X$ is big. 
We  study the normalized K\"ahler-Ricci flow on $X$:
\begin{equation}\label{NKRF_0}
\frac{\partial \omega_t}{\partial t}=-Ric(\omega_t)-\omega_t, \quad \quad
\omega_{|_{t=0}}=\omega_0.
\end{equation}
Let $T$ be the maximal existence time of the smooth flow. It is  known that $T=\infty$ if and only if the canonical bundle $K_X$ is nef, and in this case the normalized K\"ahler-Ricci flow converges to a singular K\"ahler-Einstein metric on $K_X$ (cf. \cite{Tsu88,TZ06}).   When $K_X$ is not nef, the flow has a finite time singularity ($T<\infty$). The limit class of the flow is
$$\{\alpha_T\}=\lim_{t\rightarrow T} \{\omega(t)\}=e^{-T}\{\omega_0\}+(1-e^{-T})c_1(K_X).$$
The class $\alpha_{T}$ is big and nef. For $t>T$, $\alpha_t:=\{\omega(t)\}$ is still  big but no longer nef, thus we can not continue the flow in the classical sense (we refer to \cite{SW, Tos} for  more details about   the  K\"ahler-Ricci flow).

  It was  asked by  Feldman-Ilmanen-Knopf \cite[Question 8, Section 10]{FIK} whether one can define and construct weak solutions of K\"ahler-Ricci flow after  the maximal existence time.
 In   \cite{ST12,ST}, Song and Tian have succeeded in repairing some  finite time singularities, defining weak solutions in the sense of pluripotential theory, by using strong algebraic results from the Minimal Model Program and by changing the underlying manifolds.  In \cite{BT12}, Boucksom and Tsuji have tried to run the weak  normalized K\"ahler-Ricci on projective varieties  beyond the maximal time using the  discretization of the K\"ahler-Ricci flow  and algebraic  tools.  They have proposed the following:

\medskip
\noindent {\bf Conjecture.}\cite[Conjecture 1, page 208]{BT12} { \it 
Let  $X$ be a compact K\"ahler manifold with pseudoeffective canonical bundle and $\omega_0$ be a K\"ahler form on  $X$. Then there exists a family of closed semipositive current $\omega(t)$ on $X$  such that 
\begin{enumerate}
\item $\{\omega(t)\}=e^{-t}\{\omega_0\}+(1-e^{-
t})c_1(K_X)$ and $\omega(0)=\omega_0$,
\item for any $T>0$, there exists a nonempty Zariski open subset $U(T)$ such that $\omega(t)$ is  a K\"ahler form on $U(T),\forall t\in [0,T)$,
\item  on $U(T)\times [0,T)$, $\omega(t)$ satisfies the normalized K\"ahler-Ricci flow (\ref{NKRF_0}). 
 \end{enumerate} }
 
\medskip
\noindent
 In this note we give an  answer to  the question of  a Feldman-Ilmanen-Knopf and study the conjecture of Boucksom-Tsuji. We moreover show that  the weak normalized K\"ahler-Ricci flow converges to the unique singular K\"ahler-Einstein metric in $K_X$ constructed in \cite{BEGZ,EGZ09}.   Our method is based on a viscosity  approach; It   does not use any deep algebraic technology and allows us to keep working on the same underlying manifold.  Precisely, we have the following theorem:
 
 \begin{thmA}\label{thm_KRF}
  {\it Let $(X,\omega_0)$ be a compact  K\"ahler manifold with $K_X$ is big. Fix $\theta$  a smooth $(1,1)$-form in $c_1(K_X)$.  Then the K\"ahler-Ricci flow starting from $\hat \omega \in\{\omega_0\}$ 
$$\frac{\partial \omega_t}{\partial t}=-Ric(\omega_t)-\omega_t$$
admits a unique viscosity solution $\omega_t=e^{-t}\omega_0+(1-e^{t})\theta+dd^c\f_t$ for all time. 

Moreover the flow converges,  exponentially fast in $\Amp(K_X)$ (see Definition \ref{ample_locus}), to the unique singular K\"ahler-Einstein metric in the canonical class $K_X$. And
\begin{itemize}
\item for $0<t<T_{\max}$, the function $x\mapsto \f_t(x)$ identifies with the smooth solution in \cite{TZ06}, where $T_{\max}$ is the maximal existence time of the smooth flow,
\item for $t\geq T_{\max}$ the flow $(\f_t)$ is continuous in $[T_{
max},+\infty)\times\Amp(K_X)$.
\end{itemize}
}
 \end{thmA}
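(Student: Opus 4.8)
The plan is to recast the normalized K\"ahler--Ricci flow as a degenerate parabolic complex Monge--Amp\`ere equation and to run the viscosity program in this big-class setting. Fix a smooth volume form $\Omega>0$ whose Ricci form equals $-\theta$, and set $\chi_t:=e^{-t}\omega_0+(1-e^{-t})\theta$, so that $\chi_t$ interpolates between the K\"ahler form $\omega_0$ and the smooth representative $\theta$ of the big --- but neither nef nor semipositive --- class $c_1(K_X)$. A short computation identifies solutions $\omega_t=\chi_t+dd^c\varphi_t$ of the normalized K\"ahler--Ricci flow issued from $\hat\omega=\omega_0+dd^c\varphi_0$ with solutions of
$$\partial_t\varphi=\log\frac{(\chi_t+dd^c\varphi)^n}{\Omega}-\varphi,\qquad \varphi|_{t=0}=\varphi_0 .$$
I would then fix a $\theta$-psh function $\rho$ with analytic singularities along $X\setminus\Amp(K_X)$ such that $\theta+dd^c\rho$ is a K\"ahler current which is smooth and satisfies $\theta+dd^c\rho\geq\delta_K\omega$ on every compact $K\Subset\Amp(K_X)$ (this is how the ample locus is defined, see Definition~\ref{ample_locus}); consequently $e^{-t}\omega_0+(1-e^{-t})(\theta+dd^c\rho)$ is, for every $t\ge0$, a genuine K\"ahler metric near $K$. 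Finally I would record the notions of viscosity sub/supersolution for the displayed equation (admissibility relative to $\chi_t$ imposed only on smooth test functions) and note that the smooth solution of \cite{TZ06} on $[0,T_{\max})$ is a viscosity solution there.

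The technical core, and the step I expect to be \emph{the main obstacle}, is a comparison principle for this parabolic equation when the reference class is only big. I would prove it by the doubling-of-variables technique --- doubling the space variable, penalizing in time, and inserting an exponential weight in $t$ to absorb the degeneration $\chi_t\to\theta$ --- but carried out after perturbing the candidate subsolution by $\e\rho$: since $\rho\to-\infty$ along $X\setminus\Amp(K_X)$, the extrema produced by the doubling are confined to the locus where the reference form is genuinely K\"ahler, where the Eyssidieux--Guedj--Zeriahi argument applies, and one then lets $\e\downarrow0$. This is exactly the point that goes beyond the nef case, where the reference form can simply be made K\"ahler, whereas here it is negative somewhere on $X$ and singular analytic data enter the comparison. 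Granting the comparison principle, uniqueness is immediate; for existence I would run Perron's method, the upper envelope of all subsolutions below a fixed global supersolution being a solution on $(0,+\infty)\times X$ which attains the initial datum thanks to barriers at $t=0$ supplied by the short-time smooth flow. The required global super- and subsolutions are built from the unique bounded solution $\varphi_{\mathrm{KE}}$ of $(\theta+dd^c\varphi_{\mathrm{KE}})^n=e^{\varphi_{\mathrm{KE}}}\Omega$ of \cite{BEGZ,EGZ09} --- the stationary solution of the limiting equation --- corrected by $\pm Ce^{-t}$ and by a fixed multiple of $\rho$. Identification of $\varphi_t$ with the smooth solution of \cite{TZ06} on $(0,T_{\max})$ is then forced by uniqueness, since a smooth solution is a viscosity solution.

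For the convergence statement I would first establish the uniform exponential bound $\|\varphi_t-\varphi_{\mathrm{KE}}\|_{L^\infty(X)}\le Ce^{-t}$: because $\chi_t-\theta=e^{-t}(\omega_0-\theta)$, the function $\varphi_{\mathrm{KE}}$ is a stationary solution up to an $O(e^{-t})$ error, so $\varphi_{\mathrm{KE}}\pm Ce^{-t}$ (suitably corrected to remain admissible for $\chi_t$) are viscosity super- and subsolutions, and the comparison principle yields the estimate. To upgrade this to locally uniform convergence on $\Amp(K_X)$ I would localize: on a compact $K\Subset\Amp(K_X)$ the change of unknown $\psi=\varphi+(1-e^{-t})\rho$ turns the equation into a uniformly parabolic complex Monge--Amp\`ere equation with bounded right-hand side, so interior a priori estimates of Aubin--Yau (second order), Evans--Krylov and parabolic Schauder type give bounds on $\|\varphi_t\|_{C^k(K)}$ uniform for $t\in[\e,+\infty)$ and every $k$. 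Interpolating these bounds against the exponential $L^\infty$ decay yields exponentially fast convergence of $\varphi_t$, hence of $\omega_t=\chi_t+dd^c\varphi_t$, to $\varphi_{\mathrm{KE}}$, respectively $\omega_{\mathrm{KE}}:=\theta+dd^c\varphi_{\mathrm{KE}}$, in $C^\infty_{\mathrm{loc}}(\Amp(K_X))$; in particular $\omega_{\mathrm{KE}}$ is the singular K\"ahler--Einstein metric of \cite{BEGZ,EGZ09}.

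The same local estimates deliver the last point. Since $\chi_t$ is cohomologous to a K\"ahler class on $\Amp(K_X)$, with smooth positive representative $e^{-t}\omega_0+(1-e^{-t})(\theta+dd^c\rho)$, the a priori estimates above --- which use only the $L^\infty$ bound on $\varphi_t$ and uniform parabolicity on compacts --- hold for all $t\ge\e$, so the viscosity solution is in fact smooth on $(0,+\infty)\times\Amp(K_X)$ and, a fortiori, jointly continuous on $[T_{\max},+\infty)\times\Amp(K_X)$; near $t=T_{\max}$ it moreover coincides on $\Amp(K_X)$ with the smooth K\"ahler--Ricci flow, which closes the identification across $t=T_{\max}$. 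Collecting the existence and uniqueness from the comparison principle and Perron's method, the exponential $C^\infty_{\mathrm{loc}}$ convergence on $\Amp(K_X)$, and these regularity statements yields Theorem~\ref{thm_KRF}.
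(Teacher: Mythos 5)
Your overall architecture (rewriting the normalized flow as a parabolic Monge--Amp\`ere equation, a comparison principle obtained by perturbing the subsolution by the $\theta$-psh function $\rho$ so that maxima are confined to $\Amp(K_X)$, Perron's method with barriers at $t=0$, identification with the Tian--Zhang solution by uniqueness) is the paper's strategy. The genuine gap is in the convergence step. You claim that, since $\chi_t-\theta=e^{-t}(\omega_0-\theta)$, the functions $\varphi_{\mathrm{KE}}\pm Ce^{-t}$ (``suitably corrected'') are viscosity super/subsolutions, giving $\|\varphi_t-\varphi_{\mathrm{KE}}\|_{L^\infty(X)}\le Ce^{-t}$. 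This does not work as stated: $\omega_0-\theta$ has no sign, so $\chi_t$ is not dominated by $(1+O(e^{-t}))\theta$; at a point where a lower test function $q$ touches $\varphi_{\mathrm{KE}}$, the form $(\chi_t+dd^cq)_+$ exceeds $(\theta+dd^cq)_+$ by a fixed smooth positive form times $e^{-t}$, and this \emph{additive} error cannot be absorbed multiplicatively by $e^{Ce^{-t}}$ against the degenerate density $e^{\varphi_{\mathrm{KE}}}\mu$ (recall $\varphi_{\mathrm{KE}}$ is \emph{not} bounded: it only has minimal singularities, and $\varphi_{\mathrm{KE}}\to-\infty$ on $X\setminus\Amp(K_X)$, so no uniform bound on all of $X$ is even true in the form you state). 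The paper's upper bound uses a different mechanism, which your proposal has no substitute for: one introduces the auxiliary flow $(a(t)\theta+dd^c\phi_t)^n=e^{\partial_t\phi_t+\phi_t}\mu$ with $a(t)=1+(C-1)e^{-t}$, where $C(\theta+dd^c\rho)\ge\omega_0$, shows that $\varphi_t-Be^{-t}+Ce^{-t}\rho$ is a subsolution of it (the $e^{-t}\rho$-twist is exactly what absorbs $e^{-t}\omega_0$), and proves convergence of that auxiliary flow to $\varphi_{\mathrm{KE}}$ by reducing it, through a change of unknown and of the time variable, to the Cauchy problem in the fixed big class $\{\theta\}$ -- this is the whole point of Theorem \ref{thm_Cauchy_1} and Lemma \ref{lem:converge}. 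The resulting upper bound carries a $-Ce^{-t}\rho$ term, so the convergence is locally uniform on $\Amp(K_X)$, exponential on compact subsets, not uniform on $X$. Likewise the lower bound comes from the convex-combination subsolution $e^{-t}\varphi_0+(1-e^{-t})\varphi_{\mathrm{KE}}+f(t)$ (with $f$ solving an explicit ODE), which is manifestly $\theta_t$-psh, whereas the $\theta_t$-plurisubharmonicity of your $\varphi_{\mathrm{KE}}-Ce^{-t}+\e\rho$ is unclear. Note also that the comparison principle must be applied with $\theta_t\ge g(t)\theta$, $g(t)=1-e^{-t}$, via the time-reparametrization of Corollary \ref{cor:comparison}, since $\theta_t$ is not bounded below by a fixed big form; the paper's key trick there is the concavity of $\log\det$ (removing any hypothesis on $\partial_t\varphi$ or on $\theta_t$), which your sketch does not capture.

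A second, smaller but real, problem is your regularity upgrade. The chain ``uniform parabolicity on $K\Subset\Amp(K_X)$ plus $L^\infty$ bound, then interior second-order, Evans--Krylov and Schauder estimates'' is not available: there are no purely interior $C^2$ estimates for (parabolic) complex Monge--Amp\`ere equations (no complex Pogorelov-type estimate), and the Aubin--Yau second-order estimate is global, not interior. This is precisely why the theorem, and the paper's proof, only assert that the viscosity solution is \emph{continuous} on $[T_{\max},+\infty)\times\Amp(K_X)$ -- which is automatic from the definition of viscosity solution -- and state the convergence at the level of potentials, locally uniformly on $\Amp(K_X)$, rather than in $C^\infty_{\mathrm{loc}}$. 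Finally, your $t=0$ barriers ``supplied by the short-time smooth flow'' are not supersolutions on the whole interval $[0,T)$ as the Perron argument requires; for the smooth initial data of Theorem \ref{thm_KRF} this is easily repaired (e.g. by the paper's envelope construction $P(h_j)+Bt$, or simply $\varphi_0+Bt$), but it should be said.
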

 
 \medskip
 A compact K\"ahler manifold with $K_X$ big turns out to be projective, by a classical result of Moishezon (cf. \cite{Moi}). We actually prove a more general convergence result (see Theorem \ref{convergence of KRF}), valid in the K\"ahler setting.

 \medskip
Since the K\"ahler-Ricci flow can rewritten as a parabolic complex Monge-Amp\`ere equation, a key ingredient of our approach is to construct weak solutions  for degenerate complex  Monge-Amp\`ere flows

\begin{equation}\label{CMAF_big}
\quad(\theta_t+dd^c\f_t)^n=e^{\dot{\f_t}+F(t,x,\f_t)}\mu\quad \text{on }X_T:=[0,T)\times X,
\end{equation}
where 
\begin{itemize}
\item  $(\theta_t)_{t\in [0,T]}$ be a continuous  family of smooth closed $(1,1)$-forms such that $\alpha_t= \{\theta_t\}$ is {\it big},
\item $F(t,x,r)$ is a continuous in $ [0,T)\times X$ and non decreasing in $r$,
\item $\mu(x)=f(x)dV\geq 0$ is a    continuous volume form on $X$.
\end{itemize}

\medskip
 Degenerate complex elliptic Monge-Amp\`ere equations on compact K\"ahler manifold have recently been studied intensively using tools from pluripotential theory following the pioneering work of Bedford and Taylor in the local case ( cf. \cite{BT1,BT2,Kol98,GZ05,GZ07,BEGZ}).   
A pluripotential theory for the parabolic side only developed recently \cite{GLZ1,GLZ2}.

 A complementary viscosity approach for complex Monge-Amp\`ere equations has been developed  in \cite{EGZ11, EGZ17,EGZ15,HL09, Wan12}. The similar theory for the parabolic case  has been developed in \cite{EGZ15b} on complex domains (see also \cite{DLT} for its extension) and in \cite{EGZ16,EGZ18} on compact K\"ahler  manifolds.  It is very  interesting to compare the viscosity and plutipotential notions (we refer the reader to \cite{GLZ3} for more details). 
 
  For complex Monge-Amp\`ere flows, both theories have been developed when the involved class  $(\alpha_t)$ is big and semipositive.  For further applications,  we need to extend these theories in the general case where $(\alpha_t)$  is  not  necessarily  semipositive.  

\medskip
In the first part of the note, we  therefore establish a viscosity theory for degenerate complex Monge-Amp\`ere flows where the  involved classes  $(\alpha_t)$ are {\it big}, not necessarily semipositive,  extending the results  in \cite{EGZ16,EGZ18}. We refer the reader to Section \ref{sect:adapted_defi} for the adapted definition of viscosity subsolution (resp. supersolution). The first result is a   general viscosity  comparison principle as follows.

\begin{thmA} \label{thm_comparison}
{\it Assume that  $\theta_t\geq \theta$ for a smooth $(1,1)$-form $\theta$ in some fixed big class $\alpha$. Let $\f$ (resp. $\psi$) be a viscosity subsolution (resp.  supersolution) to (\ref{CMAF_big}). 
Then  for any  $ (t,x)\in  [0,T)\times \Amp(\alpha)
$
$$(\f-\psi)(t,x)\leq \max\{ \sup_{\{0\}\times X}(\f-\psi)^* ,0\}  .$$}
\end{thmA}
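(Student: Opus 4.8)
The plan is to adapt the classical Jensen–Ishii viscosity comparison machinery to the parabolic complex Monge–Ampère setting, with two modifications to handle the \emph{big} (not semipositive) class $\alpha$: first, a penalization/regularization using a potential of a K\"ahler current in $\alpha$ whose singularities are concentrated outside $\Amp(\alpha)$, and second, a careful treatment of the parabolic boundary term at $t=0$ and the behavior as $t\to T$. Write $u=\f$, $v=\psi$, and suppose for contradiction that $M:=\sup_{[0,T)\times\Amp(\alpha)}(u-v) > \max\{\sup_{\{0\}\times X}(u-v)^*,0\}=:M_0$.

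\emph{Step 1 (Reduction via a strict subsolution from the big class).} Fix a smooth $\theta_\alpha\in\alpha$ and, by Demailly regularization, a $\theta_\alpha$-psh function $\rho$ with analytic singularities, smooth and with $\theta_\alpha+dd^c\rho\geq \delta\omega$ on $\Amp(\alpha)$, and $\rho\to-\infty$ along the non-ample locus. Since $\theta_t\geq\theta\geq$ (up to adjusting) $\theta_\alpha$ in the sense that $\alpha_t$ dominates $\alpha$, the function $u_\varepsilon:=(1-\varepsilon)u+\varepsilon\rho-\varepsilon C t$ is, for suitable $C=C(\varepsilon)$, a viscosity \emph{strict} subsolution of a slightly perturbed flow, and $u_\varepsilon-v\to-\infty$ near the non-ample locus, so the supremum of $u_\varepsilon-v$ over $[0,T)\times X$ is attained on a compact subset of $[0,T)\times\Amp(\alpha)$. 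For $\varepsilon$ small this supremum still exceeds $M_0$, so it suffices to derive a contradiction at an interior maximum of $u_\varepsilon-v$.

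\emph{Step 2 (Doubling of variables and Jensen–Ishii).} Introduce $\Phi_{j}(t,x,s,y)=u_\varepsilon(t,x)-v(s,y)-\tfrac{j}{2}d(x,y)^2-\tfrac{j}{2}(t-s)^2-\gamma/(T-t)$, using a local holomorphic chart to make $d(x,y)^2$ smooth, and let $(t_j,x_j,s_j,y_j)$ be a maximum point. Standard estimates give $j\,d(x_j,y_j)^2\to0$, $j(t_j-s_j)^2\to0$, and the maximum value $\to \sup(u_\varepsilon-v)>M_0$; in particular $t_j$ stays away from $0$ (using $M_0\geq\sup_{\{0\}\times X}(u_\varepsilon-v)^*$) and from $T$ (by the $\gamma/(T-t)$ barrier). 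Apply the parabolic theorem on sums (Crandall–Ishii) to obtain, for the subsolution, $(\tau,p,Q^+)\in\overline{\mathcal P}^{2,+}u_\varepsilon$ and for the supersolution $(\tau,p,Q^-)\in\overline{\mathcal P}^{2,-}v$ with the same time-derivative $\tau$ and space-gradient $p$, and Hessians satisfying $Q^+\le Q^-$. Feeding these into the respective viscosity inequalities for the equation $(\theta_t+dd^c\f)^n=e^{\dot\f+F(t,x,\f)}\mu$ — using that the complex Hessians $(\theta_{t}+Q^+)^{(1,1)}\le(\theta_{s}+Q^-)^{(1,1)}$ up to a $O(j\,d(x_j,y_j)^2)+o(1)$ error absorbed by continuity of $\theta_\cdot$ and $\mu$ — together with monotonicity of $F$ in $r$ and the concavity of $A\mapsto(\det A)^{1/n}$ on Hermitian positive matrices, yields the strict inequality $0<0$, the desired contradiction. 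The strictness from Step 1 is what closes the gap coming from the (harmless) first-order error terms and the lack of semipositivity.

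\emph{Step 3 (Removing the penalizations and upper semicontinuous boundary values).} Let $\gamma\to0$ then $\varepsilon\to0$; since all estimates are uniform once $(t,x)$ is confined to a fixed compact subset of $[0,T)\times\Amp(\alpha)$ and the penalized maxima converge monotonically, one recovers $(u-v)(t,x)\le M_0$ for every such $(t,x)$, hence on all of $[0,T)\times\Amp(\alpha)$. A small point requiring care is that $\f$ is only upper semicontinuous and $\psi$ lower semicontinuous, so ``$\sup_{\{0\}\times X}(\f-\psi)^*$'' must be read with the upper $*$-regularization in $(t,x)$ jointly; this is handled in Step 2 by noting that if $t_j\to0$ along a subsequence then the limiting value is $\le(\f-\psi)^*(0,x_\infty)\le M_0$, which contradicts the maximum exceeding $M_0$.

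The main obstacle is Step 1: producing a strict subsolution adapted to the big class while controlling it on $\Amp(\alpha)$. Unlike the semipositive case treated in \cite{EGZ16,EGZ18}, one cannot simply add a small multiple of a global K\"ahler potential; one must use a potential of a K\"ahler current with analytic singularities (Demailly's approximation, or the $\theta_\alpha$-psh function furnished by bigness), check that perturbing the subsolution by it preserves the viscosity subsolution property on the ample locus after the time-shift $-\varepsilon Ct$ — this is where the hypothesis $\theta_t\ge\theta$ with $\theta$ in the fixed big class $\alpha$ is used — and verify the $-\infty$ blow-up near the non-ample locus so that the interior maximum is genuinely interior to $\Amp(\alpha)$. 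Everything else is the by-now-standard parabolic viscosity comparison argument.
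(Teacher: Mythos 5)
Your overall skeleton is the same as the paper's: perturb the subsolution by $\varepsilon\rho$, where $\rho$ is the singular potential of a K\"ahler current in the fixed big class (this is where $\theta_t\ge\theta$ enters), add a time barrier so that the supremum of the perturbed difference is attained at an interior point of $[0,T)\times\Amp(\alpha)$, and then run a viscosity argument at that maximum point; the paper verifies the perturbed subsolution property by the concavity of $\log\det$ and then simply localizes and quotes the local parabolic comparison lemma of Eyssidieux--Guedj--Zeriahi (Lemma \ref{local comparison}), whereas you redo that local step by doubling variables and the parabolic theorem on sums. That substitution is legitimate in principle, and your remark that the extra positive term coming from the barrier (and from $-\varepsilon Ct$) supplies the needed strictness is exactly how the paper closes the argument (the time derivatives of the two jets are not literally equal --- they differ by $\gamma/(T-t_j)^2>0$ --- but the sign is favorable, so this is only an imprecision).

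The genuine gap is the degenerate case $\mu(x_0)=0$. Theorem \ref{thm_comparison} is stated for \eqref{CMAF_big}, where $\mu=f\,dV\ge 0$ is merely a nonnegative continuous volume form, so the maximum point may well lie where $\mu$ vanishes. Your Step 2 concludes ``$0<0$'' by combining the two jet inequalities through monotonicity and concavity, but this requires dividing by $\mu$ (equivalently taking logarithms): what the jets actually give is an inequality of the shape $e^{\tau+c_j+F(t_j,x_j,u_\varepsilon)}\mu(x_j)\le e^{\tau+F(s_j,y_j,v)}\mu(y_j)+o(1)$ with $c_j>0$, and when $\mu(x_j),\mu(y_j)\to 0$ this is vacuous --- no contradiction follows, however strict the subsolution is. The paper is careful on exactly this point: in checking that $\varphi_\lambda$ is a subsolution of the perturbed flow it treats the case $\mu(z_0)=0$ separately, and the quoted local Lemma \ref{local comparison} is stated under the alternative ``either $\mu(t_0,z_0)>0$ or $\mu=\mu(z)$''; it is the second alternative (time-independence of $\mu$, which holds in \eqref{CMAF_big}) that rescues the vanishing-$\mu$ case, and it requires an additional argument beyond the Jensen--Ishii computation you sketch. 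So your proof is complete only under the extra hypothesis $\mu>0$ near the maximum point; to prove the theorem as stated you must either invoke that local lemma (as the paper does) or supply its time-independent degenerate case yourself. A secondary, smaller point: the verification that $(1-\varepsilon)u+\varepsilon\rho-\varepsilon Ct$ is indeed a viscosity subsolution of a perturbed flow (via concavity of $\log\det$, with the correct dependence of the constant on $F$ and on the lower bound $\theta_t+dd^c\rho\ge C\omega$) is the main computation of the paper's proof, and in your write-up it is asserted rather than carried out.
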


This comparison principle  not only  generalizes   previous results to the case of big cohomology classes but also refines the K\"ahler case,  since  our assumption is weaker than these  previous works (the authors needed some conditions on either $\partial_t \varphi$ or $\theta_t$, cf.  \cite[Theorem 2.1]{EGZ16},  \cite[Theorem 4.2]{EGZ18}). 
In the present work, we exploit the  concavity of  $\log \det$ to overcome the difficulties in \cite{EGZ16,EGZ18}. Moreover, Theorem \ref{thm_comparison} can be extended to adapt  the  involved classes of the K\"ahler-Ricci flow (we refer the reader to Corollary \ref{cor:comparison}). 
  
\medskip
As a first application of the comparison principle,  we study the Cauchy problem  on a compact K\"ahler manifold $(X,\omega)$
$$(CP_1)\quad \begin{cases}
 (\theta+dd^c\f_t)^n=e^{\dot{\f_t}+\f_t}\mu \\
\f(0,x)=\f_0, 
\end{cases}  
$$
where 
\begin{itemize}
\item $\theta$ is a smooth $(1,1)$-form in a fixed big class,
\item  $\f_0$ is an $\theta$-psh function with minimal singularities which is continuous in $\Amp(\{\theta\})$\item $\mu=fdV>0$ is a continuous volume form on $X$.   
\end{itemize}

\medskip
There exists a unique solution to the static (elliptic) equation 
\begin{equation}\label{eq:elliptic}
(\theta+dd^c\varphi)^n=e^\varphi \mu
\end{equation}
by \cite{BEGZ}.

\medskip
We first prove the existence of  viscosity subsolutions and supersolutions to $(CP_1)$ and construct barriers at each point $\{0\}\times \Amp(\{\theta\})$. We then use the Perron method to show the existence of a unique viscosity solution:

\begin{thmA}\label{thm_Cauchy_1}
{\it The exists a unique viscosity solution to  $(CP_1)$ in $[0,\infty)\times \Amp(\{\theta\})$. Moreover, the flow  asymptotically recovers the solution of the elliptic Monge-Amp\`ere equation \eqref{eq:elliptic}. }
\end{thmA}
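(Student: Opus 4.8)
The plan is to establish existence via Perron's method and uniqueness via the comparison principle (Theorem \ref{thm_comparison}, or rather its extension to the $\dot\f_t + \f_t$ nonlinearity since here $F(t,x,r) = r$ is non-decreasing in $r$ with no time-dependence in $\theta_t \equiv \theta$). First I would verify that the Perron family $\mathcal{S}$ of viscosity subsolutions to $(CP_1)$ lying below a fixed supersolution is nonempty and that its upper envelope $\f := (\sup_{u \in \mathcal{S}} u)^*$ is again a subsolution, while $\f_*$ is a supersolution — this is the standard Ishii machinery, which goes through here once the structural hypotheses (properness in $r$, degenerate ellipticity of $(\theta + dd^c\cdot)^n$) are in place, exactly as in \cite{EGZ16,EGZ18}. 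The comparison principle then forces $\f = \f_*$ on $[0,\infty) \times \Amp(\{\theta\})$ and pins down the initial data, giving both existence and uniqueness of the continuous viscosity solution.

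The substantive work is the construction of barriers, and I would split it into three pieces. \emph{(a) The subsolution:} take $\underline{\f}(t,x) = \max\{\f_0(x), \varphi_\infty(x)\} - Ce^{-t}$ or a similar ansatz built from the elliptic solution $\varphi_\infty$ of \eqref{eq:elliptic} and the initial datum $\f_0$; one checks it is a subsolution for $C$ large, using that $\f_0$ is $\theta$-psh with minimal singularities and continuous on $\Amp(\{\theta\})$. \emph{(b) The supersolution:} symmetrically, $\overline{\f}(t,x) = \min\{\psi_0(x), \varphi_\infty(x)\} + Ce^{-t}$ for a smooth $\theta$-psh majorant — or more robustly, solve the smooth parabolic flow with slightly regularized data and a slightly larger Kähler form $\theta + \eps\omega$, then let $\eps \downarrow 0$. \emph{(c) Initial barriers at $\{0\} \times \Amp(\{\theta\})$:} here one needs, for each $x_0 \in \Amp(\{\theta\})$, a local subsolution and supersolution agreeing with $\f_0(x_0)$ at $t = 0$; the continuity of $\f_0$ on the ample locus is exactly what makes this possible, and the bigness of $\{\theta\}$ lets one work on a Zariski-open set where $\theta + dd^c\f_0 > 0$ so that the classical implicit-function/barrier constructions apply on a coordinate ball.

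The asymptotic statement is the last step: I would show $\f_t \to \varphi_\infty$ as $t \to \infty$. The natural argument is a squeeze. For the lower bound, observe that $\varphi_\infty - Ce^{-t}$ is a subsolution (since along it the right-hand side $e^{\dot\f + \f} = e^{Ce^{-t} + \varphi_\infty - Ce^{-t}} = e^{\varphi_\infty}\mu/\mu$-ratio matches, up to sign-checking the $\dot\f$ term) and $\varphi_\infty + Ce^{-t}$ a supersolution; comparison then gives $|\f_t - \varphi_\infty| \le Ce^{-t}$ on $\Amp(\{\theta\})$, which is the desired exponential convergence. Some care is needed because the barriers must also dominate/be-dominated-by the initial data, so $C$ must be chosen in terms of $\|\f_0 - \varphi_\infty\|_{L^\infty}$, which is finite since both have minimal singularities. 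The main obstacle I anticipate is step (c): constructing initial barriers that are genuine \emph{global} viscosity sub/supersolutions (not merely local) near $t = 0$ over the non-compact set $\Amp(\{\theta\})$, while controlling behavior as one approaches the non-ample locus — this is where the bigness hypothesis and the minimal-singularities assumption on $\f_0$ must be used most carefully, and where the refinement over the semipositive case in \cite{EGZ16,EGZ18} genuinely costs something.
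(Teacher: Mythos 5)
Your overall architecture (Perron envelope, comparison principle for uniqueness, barriers, squeeze for the long-time behavior) is the same as the paper's, and your asymptotic step is actually fine and even a bit cleaner: since $\varphi_\infty\pm Ce^{-t}$ are exact solutions of the flow (the $\dot\f$ term cancels the perturbation), comparison with $C\geq \sup_X|\f_0-\varphi_\infty|$ (finite by minimal singularities) gives $|\f_t-\varphi_\infty|\leq Ce^{-t}$ on $\Amp(\{\theta\})$, where the paper instead uses the interpolation $e^{-t}\f_0+(1-e^{-t})\varphi_\infty+f(t)$ with $f$ solving $f'+f=n\log(1-e^{-t})$ and gets $O(te^{-t})$.

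The gap is in the barrier constructions, which you yourself flag as the substantive part but do not actually carry out. (i) Your subsolution ansatz $\max\{\f_0,\varphi_\infty\}-Ce^{-t}$ is not a subsolution in general: $\f_0$ is an arbitrary $\theta$-psh function with minimal singularities and satisfies no lower Monge--Amp\`ere inequality, so on the set where the max equals $\f_0-Ce^{-t}$ the required inequality $(\theta+dd^cq)^n\geq e^{\dot q+q}\mu$ can fail (and no choice of $C$ repairs an inequality whose left side may vanish). The paper's device is the interpolation $u=e^{-t}\f_0+(1-e^{-t})\rho-At+f(t)$, where $\rho$ is Boucksom's function with $\theta+dd^c\rho\geq\e\omega$ and $f$ solves the ODE above: the geometric weights make $\dot u+u$ computable and the strict positivity of $\theta+dd^c\rho$ gives the lower bound on the Monge--Amp\`ere measure; moreover $u(0,\cdot)=\f_0$, so the same function serves as the subbarrier. (ii) More seriously, your step (c), the $\e$-superbarriers at $\{0\}\times\Amp(\{\theta\})$, is left as a placeholder, and the route you sketch would not work: there is no Zariski open set where $\theta+dd^c\f_0>0$ in general (take $\f_0=V_\theta$), a smooth $\theta$-psh majorant need not exist since $\theta$ is only big (and $\theta+\e\omega$ need not be K\"ahler), and local barriers on a coordinate ball are not admissible in the comparison principle, which requires global supersolutions on $X_T$. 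The paper's construction is the essential missing ingredient: approximate $\f_0$ from above by smooth $h_j\searrow\f_0$, take the envelopes $\f_j=P(h_j)$ with respect to a K\"ahler form $\omega\geq\theta$, use $(\omega+dd^c\f_j)^n=\mathbbm{1}_{\{\f_j=h_j\}}(\omega+dd^ch_j)^n$ to see the density is bounded, and set $v_j=\f_j+B_jt$ with $B_j$ large; this is a global supersolution whose value at $(0,x_0)$ is within $\e$ of $\f_0(x_0)$, which is exactly what the proof that $\f^*(0,\cdot)\leq\f_0$ requires. Without (i) and (ii) the Perron envelope cannot be shown to attain the initial datum, so the existence part of your argument is incomplete.
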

In the second part, we apply our techniques and study the normalized K\"ahler-Ricci flow on compact K\"ahler  manifolds  whose the canonical bundle  is {\it big}, i.e {\sl manifolds of general type}. We first use the viscosity theory above to construct the weak flow for all time.  We then prove the convergence  result of the flow, finishing the proof of Theorem \ref{thm_KRF}. In particular,  Theorem \ref{thm_Cauchy_1} is used to construct a viscosity supersolution which gives a uniform upper bound to the potential of the flow in $\Amp(K_X)$.

\medskip
The paper is organized as follows. In Section \ref{pre} we recall some notations of the viscosity theory on compact K\"ahler manifolds. In Section  \ref{sec:comparison_principle} we define the viscosity sub/super solutions for Complex Monge-Amp\`ere flows on big classes,   and  prove Theorem \ref{thm_comparison}. As a first application of Theorem  \ref{thm_comparison}, we prove  Therem \ref{thm_Cauchy_1} in Section \ref{Cauchy_pro_sect}.  Finally we prove the existence and convergence of the normalized K\"ahler-Ricci flow on compact K\"ahler manifolds of general type in Section \ref{KFR_sect}. 

\medskip
\noindent\textbf{Acknowledgement.} The author is grateful to  Vincent Guedj for support, suggestions and encouragement.  We also would like to thank   S\'ebastien Boucksom, Hoang Son Do, Henri Guenancia and Ahmed Zeriahi for fruitful discussions.  We would like to  thank Hoang-Chinh Lu for very useful discussions, suggestions and his encouragement to write down a missing argument in the proof of Theorem \ref{existence of KRF}. The author would like to thank the referees for very useful comments and suggestions. This work is supported  partially by the project  ANR GRACK.
\section{Preliminary}\label{pre}
 \subsection{Monge-Amp\`ere operator in big cohomology classes}
 
\subsubsection{Big cohomology classes}
Let $(X,\omega)$ be a compact K\"ahler manifold and let $\alpha\in H^{1,1}(X,\R)$ be a real $(1,1)$-cohomology class. 
\begin{defi}
The class $\alpha$ is  {\it pseudo-effective} if it can be represented by a closed positive $(1,1)$-current $T$.  Moreover $\alpha$ is called {\it big}  if the  $(1,1)$-current $T$  can be chosen to be {\it strictly positive}, i.e $T$ dominates some smooth positive form on $X$. 
\end{defi}
\begin{defi}
The class $\alpha$ is  {\it nef} if it lies in the closure of the  K\"ahler cone, i.e the convex cone containing  all K\"ahler classes. 
\end{defi}
 Let $T,T'$ be two positive closed current in $\alpha$ with the local potentials $\f,\f'$ respectively. We say that $T$ is less singular than $T'$ if  $\f'\leq \f+O(1)$. In addition, $T$  is said to have {\it minimal singularities} if it is less singular  than any other positive current in $\alpha$. 

One important example for such a current is the following. We first pick $\theta\in \alpha$ a smooth representative, then  the upper envelope
$$V_\theta:=\sup\{\varphi; \varphi\in PSH(X,\theta)\text{ and } \sup_X\varphi\leq 0\}$$ 
yields  a current $\theta+dd^c V_\theta$ with minimal singularities (see \cite{BD12} for the regularity of this current).

\begin{defi} A positive closed current $T$ has {\it analytic singularities} if it can be locally written $T=dd^cu$, with 
$$u=\frac{c}{2}\log \sum |f_j|^2 +v,$$
where $c>0$, $v$ is smooth and the $f_j's$ are holomorphic functions.
\end{defi}
\begin{defi}\label{ample_locus}
If $\alpha$ is a big class, we denote Amp$(\alpha)$ the {\it ample locus} of $\alpha$, i.e. the set of all $x\in X$ for which there exists a K\"ahler current in $\alpha$ with analytic singularities which is smooth in a neighborhood of $x$.
\end{defi}
By definition the ample locus is a Zariski open subset and it is non-empty by Demailly's regularization result \cite{Dem92}. It follows from \cite{Bou04} that there exists a strictly positive current $T=\theta+dd^c\psi \in \alpha$ with analytic singularities such that 
$$\Amp(\alpha)=X\setminus \Sing T, \text{ and } T\geq C\omega,$$
for some $C>0$.
\begin{lem}\cite{Bou04}\label{rho}
There exists a $\theta$-psh function $\rho$ with such that 
\begin{enumerate}
\item $\theta+dd^c\rho\geq \epsilon \omega_X$, for some $\epsilon>0$,
\item $\rho$ is smooth in $\Amp(\alpha)$ and $\Amp(\alpha)=\{ \rho=-\infty\}$,
\item $\rho\leq V_\theta$, 
\item $\rho(z) -V_\theta(z)\rightarrow -\infty$ as $z\rightarrow \partial \Amp(\alpha)$. 
\end{enumerate}
\end{lem}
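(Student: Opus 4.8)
The plan is to obtain $\rho$ by rescaling the potential of a K\"ahler current with analytic singularities whose singular set is exactly the complement of $\Amp(\alpha)$. By Demailly's regularization theorem \cite{Dem92} together with Boucksom's description of the ample locus \cite{Bou04} --- precisely the statement recalled just above the lemma --- there is a closed current $T=\theta+dd^c\psi\in\alpha$ with analytic singularities such that $T\geq c_0\,\omega_X$ for some $c_0>0$ and $\Sing T=X\setminus\Amp(\alpha)$. Since $\psi$ is $\theta$-psh it is bounded above on $X$, so after subtracting a constant we may assume $\sup_X\psi=0$; then $\psi\in PSH(X,\theta)$ competes in the envelope defining $V_\theta$, whence $\psi\leq V_\theta$. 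Moreover, because $T$ has analytic singularities, $\psi$ is smooth on $X\setminus\Sing T=\Amp(\alpha)$, and being upper semicontinuous with $\psi\equiv-\infty$ on the analytic set $\Sing T$ it satisfies $\{\psi=-\infty\}=X\setminus\Amp(\alpha)$ and $\psi(z)\to-\infty$ as $z\to\partial\Amp(\alpha)$. Thus $\psi$ already verifies (1) (with $\epsilon=c_0$), (2), and (3); the only property that can fail is (4), since a priori the poles of $\psi$ could be comparable to those of $V_\theta$, so that $\psi-V_\theta$ stays bounded near $\partial\Amp(\alpha)$.

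To fix this I would set $\rho:=\lambda\psi$ for a constant $\lambda>1$ chosen close enough to $1$. Writing $\theta+dd^c\rho=\lambda(\theta+dd^c\psi)+(1-\lambda)\theta$ and using $\theta+dd^c\psi\geq c_0\,\omega_X$ together with a bound $-A\,\omega_X\leq\theta\leq A\,\omega_X$, one gets $\theta+dd^c\rho\geq\bigl(\lambda c_0-(\lambda-1)A\bigr)\omega_X$, which is $\geq\epsilon\,\omega_X$ with $\epsilon>0$ as soon as $1<\lambda<\tfrac{A}{A-c_0}$ (any $\lambda>1$ if $A\leq c_0$); this is (1), and in particular $\rho\in PSH(X,\theta)$. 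Since $\psi\leq 0$ on $X$ and $\lambda>1$ we have $\rho=\lambda\psi\leq\psi\leq V_\theta$, which is (3). The pole set of $\rho$ equals that of $\psi$ and $\rho$ is smooth on $\Amp(\alpha)$, which is (2). Finally $\rho-V_\theta\leq\rho-\psi=(\lambda-1)\psi\to-\infty$ as $z\to\partial\Amp(\alpha)$, which is (4).

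The only substantive input is the existence of the current $T$: that a big class admits a K\"ahler current with analytic singularities is Demailly's regularization, and that one can choose a single such current with $\Sing T$ equal to the (non-empty Zariski open) ample locus is Boucksom's theorem --- both already recorded in the excerpt, so there is no real obstacle left. The remaining argument is the elementary rescaling above, the one mild point of care being to pick $\lambda$ small enough that the term $(1-\lambda)\theta$ does not destroy the strict lower bound in (1). (For consistency one should read (2) as $X\setminus\Amp(\alpha)=\{\rho=-\infty\}$, i.e. $\rho$ is finite exactly on $\Amp(\alpha)$.)
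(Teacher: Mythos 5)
Your argument is correct. The paper gives no proof of this lemma at all --- it is simply attributed to \cite{Bou04}, with the relevant input (a K\"ahler current $T=\theta+dd^c\psi\in\alpha$ with analytic singularities, $T\geq C\omega$ and $\Amp(\alpha)=X\setminus\Sing T$) quoted just above --- so your derivation is a legitimate filling-in rather than an alternative route. The one genuinely nontrivial point beyond the citation is property (4), since $\psi$ itself need not satisfy it, and your rescaling $\rho=\lambda\psi$ with $1<\lambda$ close to $1$ handles it cleanly: $\rho-V_\theta\leq(\lambda-1)\psi\to-\infty$ because $\psi\leq V_\theta$ and $\psi$ tends to $-\infty$ at $\partial\Amp(\alpha)$, while the constraint $\lambda<A/(A-c_0)$ preserves the strict positivity in (1). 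Your parenthetical reading of (2) as $X\setminus\Amp(\alpha)=\{\rho=-\infty\}$ is also the right one; the statement as printed is a typo.
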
 

\subsubsection{Non-pluripolar product}
Let $X$ be an $n$-dimensional complex manifold. Let $u_1,\ldots,u_p$ be psh functions on $X$. Denote 
$$\mathcal{O}_k:=\cap_{j=1}^{p}\{u_j>-k\},$$
\begin{defi}
If $u_1,\ldots,u_p$ are psh functions on $X$, we  say that the non-pluripolar product $\langle \bigwedge_{j=1}^p dd^cu_j\rangle$ is well-defined on $X$ if for each compact subset $K$ of $X$ we have
\begin{equation}
\sup_k\int_{K\cap O_k}\omega^{n-p}\cap \bigwedge dd^c \max\{u_j,-k\}< \infty.
\end{equation}
\end{defi}

Now let $(X,\omega)$ be a compact K\"ahler manifold and $\alpha $ be a big cohomology class on $X$. Given  a $\theta$-psh function $\f$, we can define its non-pluripolar Monge-Amp\`ere by ${\rm MA}(\f):=\left\langle (\theta+dd^c\f)^n\right\rangle $.  Then we have 

$$\int_X \left\langle (\theta+dd^c\f)^n\right\rangle \leq {\rm vol}(\alpha).$$
We  say that the function $\f$ such that the equality holds has {\sl full Monge-Amp\`ere mass}. In particular, all $\theta$-psh functions with minimal singularities have full Monge-Amp\`ere mass. 

\medskip
\noindent
{\bf Notation.} From now we denote the non-pluripolar Monge-Amp\`ere  product $(\theta+dd^c\f)^n$  instead of $\left\langle (\theta+dd^c\f)^n\right\rangle$. 
\subsection{Monge-Amp\`ere equation in big cohomology classes}
\subsubsection{Pluripotential approach}
Fix $\alpha$ a big class on $X$ and $\theta$  a smooth $(1,1)$ form representing $\alpha$.  We consider the following Monge-Amp\`ere equation 
\begin{equation}\label{eq:elliptic_1}
(\theta+dd^c\f)^n=fdV
\end{equation}
where $f\in L^{p}(X)$ for some $p>1$. Then we have the following theorem of existence of solution due to \cite[Theorem 4.1]{BEGZ}
\begin{thm}
There exists  a unique solution $\f\in PSH(X,\theta)$ to the equation  (\ref{eq:elliptic_1}) satisfying $\sup_X \f=0$. Moreover,  there exists a constant $M$ only depending on $\theta,dV, p$ such that 
$$\f\geq V_\theta-M \|f \|^{1/n}_{L^p}.$$
\end{thm}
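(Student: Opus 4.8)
The plan is to prove existence by the variational method (Monge--Amp\`ere energy maximization) adapted to big classes, uniqueness by the comparison/domination principle for the non-pluripolar Monge--Amp\`ere operator, and the quantitative estimate $\f\ge V_\theta-M\|f\|_{L^p}^{1/n}$ by a Kolodziej-type capacity iteration. Throughout one works under the necessary compatibility condition $\int_X f\,dV=\mathrm{vol}(\alpha)$, without which no full-mass solution exists. For uniqueness, suppose $\f_1,\f_2\in PSH(X,\theta)$ both solve \eqref{eq:elliptic_1} with $\sup_X\f_i=0$. Since the total mass of $f\,dV$ equals $\mathrm{vol}(\alpha)$, each $\f_i$ has full Monge--Amp\`ere mass, so both lie in the class $\mathcal E(X,\theta)$. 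One then argues that on the plurifine open set $\{\f_1<\f_2\}$ the locality of the non-pluripolar product gives $\mathbf 1_{\{\f_1<\f_2\}}(\theta+dd^c\max(\f_1,\f_2))^n=\mathbf 1_{\{\f_1<\f_2\}}f\,dV$, and, since $f\,dV$ puts no mass on pluripolar sets, the domination/uniqueness principle in big classes (as in \cite{BEGZ}) forces $\f_1=\f_2$.

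\textbf{Existence.} I would work in the finite-energy class $\mathcal E^1(X,\theta)$ with the Monge--Amp\`ere energy $E(\f)=\frac1{n+1}\sum_{j=0}^n\int_X(\f-V_\theta)\,(\theta+dd^c\f)^j\wedge(\theta+dd^cV_\theta)^{n-j}$. Because $f\in L^p$ with $p>1$, H\"older's inequality together with the uniform (Skoda-type) exponential integrability of $\theta$-psh functions shows that $\f\mapsto L(\f):=\int_X\f\,f\,dV$ is finite and continuous on $\mathcal E^1(X,\theta)$ for the $L^1$-topology and that $f\,dV$ has finite energy (cf. \cite{GZ07,BEGZ}). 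The functional $\mathcal F:=E-L$ is then upper semicontinuous and coercive on $\{\sup_X\f=0\}$ by the standard energy estimates, hence attains its maximum at some $\f\in\mathcal E^1(X,\theta)$; computing the first variation, $dE|_\f(\chi)=\int_X\chi\,(\theta+dd^c\f)^n$, one finds that the Euler--Lagrange equation of $\mathcal F$ is exactly $(\theta+dd^c\f)^n=f\,dV$, and $\sup_X\f=0$ may be imposed since the equation is invariant under adding constants. Alternatively one can solve the non-degenerate equations $(\theta+\eps\omega+dd^c\f_\eps)^n=c_\eps f\,dV$ with $c_\eps\to 1$ by Kolodziej's theorem, derive $\eps$-uniform bounds from the estimate below, and pass to the limit using monotone stability of non-pluripolar products.

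\textbf{The lower bound.} Fix a K\"ahler form $\omega$ with $\theta\le\omega$ and let $\mathrm{Cap}_\omega$ denote the associated Monge--Amp\`ere capacity. Since $\sup_X\f=0$ we have $\f\le V_\theta$; set $g(s):=\mathrm{Cap}_\omega\big(\{\f<V_\theta-s\}\big)^{1/n}$, a decreasing right-continuous function with $g(0)\le\mathrm{Cap}_\omega(X)^{1/n}=:c_X$. The comparison principle for functions with minimal singularities in a big class, combined with H\"older, gives for $s,t>0$
\[
t^n g(s+t)^n\le\int_{\{\f<V_\theta-s\}}(\theta+dd^c\f)^n=\int_{\{\f<V_\theta-s\}}f\,dV\le\|f\|_{L^p}\,\mathrm{Vol}\big(\{\f<V_\theta-s\}\big)^{1-1/p}.
\]
Feeding in the volume--capacity comparison $\mathrm{Vol}(\{\f<V_\theta-s\})\le C_0\,g(s)^{n\tau}$, valid for every $\tau>0$ with $C_0=C_0(X,\omega,\theta,dV,\tau)$ (a consequence of Skoda's exponential integrability), and choosing $\tau$ large enough, one obtains
\[
t\,g(s+t)\le C\,\|f\|_{L^p}^{1/n}\,g(s)^{2},\qquad C=C(X,\omega,\theta,dV,p).
\]
A De Giorgi-type iteration (telescoping $t_j=2^{-j}s_\infty$) then yields $g(s_\infty)=0$ as soon as $s_\infty\ge M\|f\|_{L^p}^{1/n}$ for a suitable $M=M(X,\omega,\theta,dV,p)$, since the driving datum $g(0)\le c_X$ is a universal constant. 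Hence $\mathrm{Cap}_\omega(\{\f<V_\theta-M\|f\|_{L^p}^{1/n}\})=0$, which (both sides being $\theta$-psh, so equal to their upper semicontinuous regularizations) forces $\f\ge V_\theta-M\|f\|_{L^p}^{1/n}$ everywhere.

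\textbf{Main obstacle.} The delicate ingredients are the capacity inequality $t^n\,\mathrm{Cap}_\omega(\{\f<V_\theta-s-t\})\le\int_{\{\f<V_\theta-s\}}(\theta+dd^c\f)^n$ and the volume--capacity estimate \emph{for the non-pluripolar Monge--Amp\`ere operator in a big, possibly non-nef, class}: unlike the nef case, one must work relative to $V_\theta$, control behaviour near the non-ample locus, and exploit that $(\theta+dd^c\f)^n$ charges no pluripolar set --- these are precisely the technical advances of \cite{BEGZ}. One must also verify that the variational maximizer has \emph{full} Monge--Amp\`ere mass and genuinely solves the equation (rather than an inequality), which is again where the hypothesis $p>1$, via the finite energy of $f\,dV$, is essential.
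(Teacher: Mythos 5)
The paper does not actually prove this statement: it is quoted verbatim from \cite[Theorem 4.1]{BEGZ}, so there is no in-paper argument to compare with. Your proposal is, in spirit, a reconstruction of the strategy behind that reference (a Kolodziej-type capacity iteration relative to $V_\theta$ for the lower bound, an existence argument, and a Dinew-type uniqueness statement), and you correctly add the compatibility condition $\int_X f\,dV=\mathrm{vol}(\alpha)$ which the paper leaves implicit. However, several of your steps have genuine gaps as written.

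(i) Uniqueness: the domination principle needs $\mathrm{MA}(\f_1)\big(\{\f_1<\f_2\}\big)=0$, whereas here that mass equals $\int_{\{\f_1<\f_2\}}f\,dV$, which has no reason to vanish; the plurifine locality trick only shows that $\max(\f_1,\f_2)$ is again a solution. The actual uniqueness for $(\theta+dd^c\f)^n=f\,dV$ (no $e^{\f}$ term) is the adaptation of Dinew's theorem carried out in \cite{BEGZ}, and your one-line appeal does not substitute for it. (ii) The capacity inequality is stated for $\mathrm{Cap}_\omega$, but the standard proof breaks there: the natural test function $t\psi+(1-t)V_\theta$ with $\psi$ only $\omega$-psh is not $\theta$-psh, so the comparison principle for the $\theta$-Monge--Amp\`ere operator cannot be invoked; one must use the relative capacity built from $\theta$-psh functions with $V_\theta-1\leq u\leq V_\theta$ (as in \cite{BEGZ}), or else prove a comparison between the two capacities, which is itself a nontrivial point in the big, non-nef setting. (iii) The ``alternative'' existence route via Kolodziej applied to $(\theta+\eps\omega+dd^c\f_\eps)^n=c_\eps f\,dV$ is not available: $\theta+\eps\omega$ is not a K\"ahler form and the class $\alpha+\eps\{\omega\}$ is still merely big, so the approximating equations are of exactly the same degenerate type --- this is circular. (iv) In the variational route, the Euler--Lagrange step is not the naive first variation: since one cannot perform two-sided variations inside $PSH(X,\theta)$, one needs the differentiability of the projected energy $E\circ P$ (the Berman--Boucksom orthogonality/projection theorem), i.e.\ the machinery of the later variational paper of Berman--Boucksom--Guedj--Zeriahi rather than of \cite{BEGZ}; and one must still invoke the capacity estimate to upgrade the finite-energy maximizer to a solution with minimal singularities. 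With these points repaired (or simply by citing \cite{BEGZ} as the paper does), the argument is the standard one.
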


In that same paper \cite{BEGZ}, the authors also established the existence of solutions to the equation 
\begin{equation}
(\theta+dd^c\f)^n=e^{\f}\mu
\end{equation}
with $\mu$ is a smooth volume form.  This implied the existence of a unique singular K\"ahler-Einstein metric on $K_X$. More precisely, we have
\begin{thm}\cite{BEGZ}\label{thm:BEGZ2}
Let $\mu=fdV$ be a volume form with $f\in L^p(X)$ for some $p>1$. Then there exists a unique $\theta$-psh function $\f$ such that 
\begin{equation}\label{eq:elliptic_2}
(\theta+dd^c \f)^n=e^{\f}\mu.
\end{equation}
Furthermore, $\f$ has minimal singularities.  
\end{thm}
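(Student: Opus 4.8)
The plan is to reduce the equation $(\theta+dd^c\varphi)^n = e^\varphi\mu$ to the linear-coefficient equation \eqref{eq:elliptic_1} already solved in the previous theorem, and then run a fixed-point / monotone-iteration argument. First I would fix $\mu=fdV$ with $f\in L^p$, $p>1$, and for a candidate bounded-above $\theta$-psh function $\psi$ consider the equation $(\theta+dd^c\Phi(\psi))^n = e^\psi\mu$; since $e^\psi f\in L^p$ (here one uses that $\psi$ is bounded above, so $e^\psi$ is bounded), the cited theorem of Boucksom–Eyssidieux–Guedj–Zeriahi produces a unique solution $\Phi(\psi)\in PSH(X,\theta)$ with $\sup_X\Phi(\psi)=0$ and the quantitative lower bound $\Phi(\psi)\geq V_\theta - M\|e^\psi f\|_{L^p}^{1/n}$, so in particular $\Phi(\psi)$ has minimal singularities. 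The operator $\Phi$ is order-reversing in $\psi$ (if $\psi_1\leq\psi_2$ then $e^{\psi_1}\mu\leq e^{\psi_2}\mu$, and comparison of non-pluripolar Monge–Ampère masses — as in the uniqueness part of the cited results — gives $\Phi(\psi_1)\geq\Phi(\psi_2)$). Hence $\Phi\circ\Phi$ is order-preserving, and I would iterate from a well-chosen starting point.

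Next I would produce sub- and supersolutions to pin down the iteration. For the supersolution: a large constant $C$ works once $e^{-C}$ times the mass $\mathrm{vol}(\alpha)$ dominates $\int e^{V_\theta}\mu$ appropriately; more precisely one checks that for $C$ large, $\psi^+ := V_\theta$ (or $V_\theta$ shifted) satisfies $(\theta+dd^c\psi^+)^n \leq e^{\psi^+ + C}\mu$, i.e. $\psi^+ + C$ is a supersolution. For the subsolution: using the previous theorem, solve $(\theta+dd^c u)^n = e^{-C'}\mu$ with $\sup_X u=0$; then for $C'$ large, $u - C'$ lies below $V_\theta$ and satisfies $(\theta+dd^c(u-C'))^n = e^{-C'}\mu \geq e^{u-C'}\mu$, so $u-C'$ is a subsolution. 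Both are $\theta$-psh with minimal singularities. Starting the monotone iteration $\varphi_{k+1} := \Phi(\varphi_k)$ appropriately (from the subsolution composed once, so that the sequence is monotone) yields a monotone sequence trapped between the sub- and supersolution; its limit $\varphi$ is then bounded, $\theta$-psh, has minimal singularities (squeezed between two functions with minimal singularities), and by stability of the non-pluripolar Monge–Ampère operator along monotone sequences of uniformly bounded potentials (the Bedford–Taylor/BEGZ convergence theorems) solves $(\theta+dd^c\varphi)^n = e^\varphi\mu$.

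For uniqueness I would use the standard comparison trick: if $\varphi_1,\varphi_2$ are two solutions, the set $\{\varphi_1>\varphi_2\}$ carries $(\theta+dd^c\varphi_1)^n \geq (\theta+dd^c\varphi_2)^n$ pointwise-of-measures while simultaneously $e^{\varphi_1}\mu > e^{\varphi_2}\mu$ there — combining the domination principle for non-pluripolar products (valid because both potentials have full, indeed minimal-singularity, Monge–Ampère mass) with the monotonicity forces the set to be pluripolar, hence null for $\mu$, and symmetrically, giving $\varphi_1=\varphi_2$ a.e. and then everywhere.

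The main obstacle I anticipate is the bookkeeping with \emph{minimal singularities} rather than continuity or boundedness: one must make sure the monotone limit does not lose minimal singularities and that the non-pluripolar Monge–Ampère operator is continuous along the particular monotone sequence produced — this is where the uniform two-sided squeeze by $V_\theta$-type barriers is essential, and it is the technical heart of the argument. Everything else (constructing the constants $C$, $C'$, verifying the sub/supersolution inequalities, the comparison principle) is routine once the results quoted in the excerpt are in hand.
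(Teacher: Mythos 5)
The paper itself offers no proof here: Theorem \ref{thm:BEGZ2} is quoted directly from \cite{BEGZ}, so your proposal must be measured against the argument in that reference (a Schauder-type fixed point scheme, as in \cite{EGZ09}, plus the comparison principle). Your existence scheme has a genuine gap at its very first step: the operator $\Phi$ is not well defined. For any $\theta$-psh $u$ the total non-pluripolar mass satisfies $\int_X(\theta+dd^cu)^n\le \mathrm{vol}(\alpha)$, with equality exactly for full-mass (in particular minimal-singularity) potentials; consequently the equation $(\theta+dd^c\Phi(\psi))^n=e^{\psi}\mu$ has no solution of the type produced by the $L^p$ existence theorem unless $\int_X e^{\psi}\mu=\mathrm{vol}(\alpha)$. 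That mass normalization is implicit in the cited result (BEGZ Theorem 4.1 requires the measure to have total mass $\mathrm{vol}(\alpha)$), and your iterates $e^{\varphi_k}\mu$ will not satisfy it, so the monotone iteration cannot even be started. The auxiliary claims inherit the problem: with $\sup_X$-normalized solutions there is no general monotonicity of the solution of $(\theta+dd^c u)^n=\nu$ with respect to $\nu$ (indeed, once both measures are forced to have mass $\mathrm{vol}(\alpha)$, an inequality $\nu_1\le\nu_2$ makes them equal), so the asserted order-reversal of $\Phi$ is unsupported; and even granting it, a fixed point of $\Phi\circ\Phi$ need not be a fixed point of $\Phi$ without an extra argument using the strict monotonicity of the zeroth-order term.

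The repair is essentially the actual proof: build the normalizing constant into the scheme. One sets $c_\psi$ by $e^{c_\psi}\int_X e^{\psi}\mu=\mathrm{vol}(\alpha)$ and studies $\psi\mapsto$ the normalized solution of $(\theta+dd^c\varphi)^n=e^{\psi+c_\psi}\mu$, which is well defined by the $L^p$ theorem; continuity and compactness of this map on a suitable convex compact set of normalized $\theta$-psh functions give a fixed point by Schauder, and absorbing the constant yields a solution of \eqref{eq:elliptic_2}, while the quantitative bound $\varphi\ge V_\theta-M\|e^{\psi}f\|_{L^p}^{1/n}$ (uniform once one has an a priori $\sup$ bound) gives minimal singularities. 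Your sub/supersolution barriers would then serve only to bound the fixed point, not to generate it. By contrast, your uniqueness argument -- the comparison principle for full-mass potentials on $\{\varphi_1>\varphi_2\}$ combined with strict monotonicity of $e^{\varphi}$ and the domination principle -- is correct and is the standard one.
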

We refer the reader to \cite{GZ17,PS} for more details about complex Monge-Amp\`ere equations. 
\subsubsection{Viscosity approach}
The pluripotential theory gives us the existence of $\theta$-psh solution with minimal singularities to the equation (\ref{eq:elliptic_2}). 
In \cite{EGZ15}, the authors developed a viscosity theory for the  complex Monge-Amp\`ere equation \eqref{eq:elliptic_2}  in which $\mu$ is continuous. They   proved the existence of a unique viscosity solution to (\ref{eq:elliptic_2}) which is {\it continuous} in $\Amp(\alpha)$. Furthermore, this is exactly the pluripotential solution.

\medskip
We recall here the basic results in \cite{EGZ15} on the viscosity approach to the equation: 
\begin{equation}\nonumber
(MA_\mu) \hskip1cm  (\theta+dd^c\varphi)^n=e^{\varphi}\mu,
\end{equation}
where $\theta$ is a smooth $(1,1)$ form representing $\alpha$. Denote $\Omega=\Amp(\alpha)$.
 \begin{defi}(Test functions)
Let $\f:X\rightarrow \R$ be any function and $x_0$ a given point such that $\f(x_0)$ is finite. An {\sl upper test function} (resp. a {\sl lower test function}) for $\f$ at $x_0$ is a $C^{2}$- function $q$ in a neighborhood of $x_0$ such that $\f(x_0)=q(t_0,x_0)$ and $\f\leq q$ (resp. $\f\geq q$) in a neighborhood of $x_0$. 
\end{defi}

\begin{defi}
A function $\f: X \rightarrow \R\cup\{-\infty\}$ is a {\sl viscosity subsolution} of $(MA_\mu)$ on $X$ if 
\begin{itemize}
\item $\f: \Omega\rightarrow \R$ is upper semi-continuous,
\item $\f \leq V_{\theta}+C$  on $X$, for some constant $C$ and $\f\not\equiv - \infty$,
\item for any point $x_0\in \Omega$ and any upper test function $q$ for $\f$ at $x_0$, we have 
$$(\theta(x_0) +dd^c q(x_0))^n\geq e^{q(x_0)}\mu(x_0).$$
\end{itemize}
\end{defi}

\begin{defi}
A function $\psi: X\rightarrow \R\cup\{-\infty,+\infty\}$ is a {\sl viscosity supersolution} of $(MA_\mu)$ on $X$ if 
\begin{itemize}
\item $\psi: \Omega\rightarrow \R$ is lower semi-continuous,
\item $\psi\geq V_{\theta} -C$ on $X$, for some $C>0$ and $\f\not\equiv +\infty$,
\item for any point $x_0\in\Omega$ and any lower test function $q$ for $\psi$ at $x_0$, we have 
$$(\theta(x_0) +dd^c q(x_0))_+^n\leq e^{q(x_0)}\mu(x_0).$$
\end{itemize}
\end{defi}
Here $\omega_{+}=\omega$ if $(1,1)$-form $\omega$ is semipositive and $\omega_{+}=0$  otherwise.  

\begin{defi}
A {\sl viscosity solution} of $(MA_\mu)$ is a function that is both a viscosity subsolution and viscosity supersolution. In particular, a viscosity solution has the same singularities as $V_\theta$.
\end{defi}

\begin{thm}\label{CP}
Let $\alpha$ be a big cohomology class. Let $\f$ (resp. $\psi$) be a viscosity subsolution (resp. supersolution) of $(MA\mu)$, then 
$$\f\leq \psi \text{ in } \Amp(\alpha).$$
\end{thm}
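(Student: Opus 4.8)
The plan is to adapt the classical Jensen--Ishii viscosity comparison argument to the big-class setting, using the function $\rho$ from Lemma \ref{rho} as a penalization device to confine the analysis to the ample locus $\Omega=\Amp(\alpha)$, where everything is smooth. More precisely, for $\eta>0$ consider the perturbed subsolution $\f_\eta:=(1-\eta)\f+\eta\rho$ (or rather $\f_\eta = \f + \eta(\rho - V_\theta)$ with a suitable normalization), which is still a subsolution of a slightly perturbed equation $(MA_{\mu_\eta})$ since $\theta + dd^c\rho \geq \eps\omega$ and $\log\det$ behaves well under such convex combinations; the key gain is that, by property (4) of Lemma \ref{rho}, $\f_\eta - \psi \to -\infty$ near $\partial\Omega$, so the upper semicontinuous function $\f_\eta - \psi$ attains its maximum over $\bar\Omega$ at an interior point of $\Omega$. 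It then suffices to show $\sup_\Omega(\f_\eta - \psi)\le 0$ and let $\eta\to 0$.

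To bound this interior maximum, I would argue by contradiction: if $M:=\sup_\Omega(\f_\eta-\psi)>0$, double the variables and consider, for $\e>0$, the function $(x,y)\mapsto \f_\eta(x) - \psi(y) - \frac{1}{2\e}d(x,y)^2$ on $\Omega\times\Omega$ (using a fixed smooth metric $d$ on $X$, working in a coordinate chart near the maximizing point). Standard arguments show the maximum is attained at $(x_\e,y_\e)$ with $x_\e,y_\e$ both converging to the maximum point $x_0$ and $\frac{1}{\e}d(x_\e,y_\e)^2\to 0$. The Jensen--Ishii lemma (theorem of sums) then produces Hessians $P\in D^{2,+}\f_\eta(x_\e)$ and $Q\in D^{2,-}\psi(y_\e)$ with $P\le Q + o(1)$ in the sense of symmetric matrices; translating into test functions $q^+$ for $\f_\eta$ at $x_\e$ and $q^-$ for $\psi$ at $y_\e$, the subsolution and supersolution inequalities give
\begin{equation}\nonumber
(\theta(x_\e)+dd^cq^+(x_\e))^n \geq e^{q^+(x_\e)}\mu_\eta(x_\e), \qquad (\theta(y_\e)+dd^cq^-(y_\e))_+^n \leq e^{q^-(y_\e)}\mu_\eta(y_\e).
\end{equation}
Now the complex Hessian ordering $\theta(x_\e)+dd^cq^+(x_\e) \le \theta(y_\e)+dd^cq^-(y_\e) + o(1)$ (after accounting for the continuity of $\theta$ and the penalization margin $\eps$, which forces the left side to be strictly positive so the $(\cdot)_+$ on the right is harmless), combined with monotonicity of $\det$ on positive Hermitian matrices, yields $e^{q^+(x_\e)}\mu_\eta(x_\e) \le e^{q^-(y_\e)}\mu_\eta(y_\e) + o(1)$. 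Since $q^+(x_\e)-q^-(y_\e) \to M > 0$ and $\mu$ is continuous and strictly positive, passing to the limit gives $e^{M}\mu(x_0) \le \mu(x_0)$, hence $M\le 0$, a contradiction.

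The main obstacle I anticipate is making the penalization clean: one must check that the perturbed function $\f_\eta$ really is a subsolution of an equation close enough to $(MA_\mu)$ that the final contradiction still runs — this uses that $t\mapsto \log\det$ is concave and that $\theta+dd^c\rho$ dominates $\eps\omega$, so that $\theta(x_0) + dd^c\big((1-\eta)q + \eta\rho\big)(x_0)$ is positive with a quantitative lower bound and its determinant is controlled below by $(1-\eta)^n (\theta+dd^cq)^n$ plus a positive error. The subtlety is that $\rho$ is only $\theta$-psh, not strictly positive everywhere as a form with a uniform constant unless restricted to $\Omega$, but that is exactly where we work. A secondary technical point is the passage from the abstract maximum-principle inequality to genuine $C^2$ test functions so that the definitions of sub/supersolution in the excerpt apply verbatim; this is routine but must be stated carefully. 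Finally, letting $\eta\to 0$ recovers $\f\le\psi$ pointwise on all of $\Omega$, which is the assertion of Theorem \ref{CP}.
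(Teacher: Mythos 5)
Your proposal is correct and follows essentially the same route as the paper: perturb the subsolution to $(1-\eta)\f+\eta\rho$ using Lemma \ref{rho}, use the fact that $\rho-V_\theta\to-\infty$ at $\partial\Amp(\alpha)$ to force the maximum of the (usc extension of the) difference into the ample locus, then conclude by a local viscosity comparison at that interior maximum (the Jensen--Ishii doubling you spell out is exactly the step the paper delegates to \cite[Theorem 13.11]{GZ17}) and let $\eta\to0$. Just keep the convex combination $(1-\eta)\f+\eta\rho$ rather than your parenthetical variant $\f+\eta(\rho-V_\theta)$, since $V_\theta$ is not smooth and subtracting it does not cleanly preserve the subsolution property.
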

We sketch a proof following \cite[Theorem 13.11]{GZ17} in which the cohomology class $\alpha$ is semipositive and big.  This  is slightly different from the proof in \cite{EGZ15}. 

\begin{proof}
Since $\alpha=\{\theta\}$ is big,  there exists a $\theta$-psh function $\rho \leq \f$  satisfying
$$\theta+dd^c\rho\geq C\omega.$$
Moreover, it follows from Lemma \ref{rho} that one can find $\rho$ to be smooth in the ample locus $\Omega=\Amp(\alpha)$  such that $(\rho(x)-V_\theta) \rightarrow -\infty$ as $x\rightarrow \partial \Omega$. 

\medskip 
Now, fix $\lambda\in (0,1)$ and set 
$\tilde{\f}=(1-\lambda)\f +\lambda \rho.$
By the definitions   of sub/super-viscosity solutions $\f-\psi$ is bounded from above on $X$, hence $\tilde \f-\psi$ is  also bounded from above on $X$, so  we can extend it as an usc function $(\tilde{\f}-\psi)^*$ on $X$. Since $(\tilde{\f}-\psi)= (1-\lambda)(\f-V_\theta)-(\psi-V_\theta)+\lambda(\rho-V_\theta)$ is usc in $\Omega$ and tends to $-\infty$ as $x\rightarrow \partial \Omega$, the maximum of $(\tilde{\f}-\psi)^*$ is achieved at some point $x_0$ in $\Omega$,
$$\sup_{x\in X}(\tilde{\f}-\psi)^*  =\tilde{\f}(x_0)-\psi(x_0).$$
We now need prove that $\tilde{\f}(x_0)\leq \psi(x_0)$. 
The  proof of this claim is similar as in  \cite[Theorem 13.11]{GZ17}. Finally we have  $\tilde \psi\leq \psi$, and letting $\lambda\rightarrow 0$ we get the required inequality. 
\end{proof}
As a corollary of the comparison principle we have: 
\begin{thm}\cite{EGZ15}
Let $\alpha$ be a a big cohomology class and $\mu> 0$ is a continuous  density. Then there exists a unique pluripotential solution $\f$ of $(MA_\mu)$ on $X$, such that 
\begin{enumerate}
\item $\f$ is a $\theta$-psh function with minimal singularities,
\item $\f$ is a viscosity solution in $\Amp(\alpha)$ hence continuous here,
\item Its lower semicontinuous regularization $\f_*$ is a viscosity supersolution.
\end{enumerate} 
\end{thm}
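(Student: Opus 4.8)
The plan is to deduce this corollary from the comparison principle (Theorem \ref{CP}) together with the pluripotential existence result (Theorem \ref{thm:BEGZ2}) by a perturbation-plus-stability argument. First I would recall from Theorem \ref{thm:BEGZ2} that, since $\mu = f\,dV$ with $f$ continuous and strictly positive, $f$ is in particular in $L^p$ for every $p$, so there is a unique $\theta$-psh function $\f$ with minimal singularities solving $(\theta+dd^c\f)^n = e^\f\mu$ in the pluripotential sense. The content to be added is that this $\f$ is actually a \emph{viscosity} solution on $\Amp(\alpha)$ and hence continuous there. For uniqueness I would simply invoke Theorem \ref{CP}: if $\f_1,\f_2$ are two viscosity solutions then each is simultaneously a subsolution and a supersolution, so $\f_1\le\f_2$ and $\f_2\le\f_1$ on $\Amp(\alpha)$; combined with the requirement that both have the same minimal singularities (so they agree as $\theta$-psh functions globally once they agree on the Zariski-open ample locus, the complement being pluripolar), this gives uniqueness.

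The heart of the matter is the equivalence between pluripotential and viscosity solutions, for which I would follow the scheme of \cite{EGZ15}. The strategy is to approximate the continuous density $f$ by smooth densities $f_j$ with $f_j \to f$ uniformly and $f_j$ bounded below away from $0$; on a slightly shrunk smooth model (or after a Demailly regularization replacing $\theta$ by $\theta + \eps\omega$ with analytic-singularity potentials) one has genuine smooth solutions $\f_j$ to $(\theta+dd^c\f_j)^n = e^{\f_j}f_j\,dV$ on $\Amp(\alpha)$, which are classical hence both viscosity sub- and supersolutions. Pluripotential stability (as in \cite{BEGZ}, or \cite{EGZ09}) gives $\f_j \to \f$ in $L^1$ and, by the local a priori estimates on $\Amp(\alpha)$, locally uniformly on the ample locus; uniform convergence of viscosity sub/supersolutions passes to the limit (standard in viscosity theory), so $\f$ is a viscosity solution on $\Amp(\alpha)$ and is continuous there. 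That $\f_*$ is a viscosity supersolution then follows because the lower test-function inequality is stable under decreasing limits and lower semicontinuous regularization, using $(\theta + dd^c q)_+^n \le e^q\mu$ at points where a lower test function touches.

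For the statement that $\f$ as a $\theta$-psh function has minimal singularities I would not reprove it — it is already part of Theorem \ref{thm:BEGZ2} — but I would note the consistency: a viscosity solution on $\Amp(\alpha)$ automatically satisfies $V_\theta - C \le \f \le V_\theta + C$ by the subsolution/supersolution bounds built into the definitions, so the viscosity notion is compatible with the minimal-singularities normalization. Finally, to see that the viscosity solution on $\Amp(\alpha)$ coincides with the \emph{global} pluripotential solution, I would invoke the comparison principle once more: the pluripotential solution $\f$ restricted to $\Amp(\alpha)$ is a viscosity solution there (by the approximation argument), and any other viscosity solution equals it on $\Amp(\alpha)$; since $X\setminus\Amp(\alpha)$ is pluripolar and both functions are $\theta$-psh with minimal singularities, they coincide on all of $X$.

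The main obstacle, as always in this circle of ideas, is the passage from pluripotential to viscosity solution on the non-compact Zariski-open set $\Amp(\alpha)$: one must control the solutions uniformly near $\partial\Amp(\alpha)$, which is exactly what Lemma \ref{rho} and the analytic-singularity current $T = \theta + dd^c\psi \ge C\omega$ are for — they provide the barrier $\rho$ that confines the maximum in the comparison argument to a compact subset of $\Amp(\alpha)$, precisely as in the sketched proof of Theorem \ref{CP}. Once that localization is in hand, the remaining steps (smooth approximation of $f$, classical existence on the ample locus, stability of viscosity sub/supersolutions under uniform limits) are routine.
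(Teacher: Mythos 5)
The paper does not actually prove this statement: it is quoted from \cite{EGZ15} and presented as a corollary of the comparison principle (Theorem \ref{CP}), so the only thing to compare your argument with is the EGZ scheme that the paper's framework (Lemmas of type \ref{pluri and vis  subsolutions}/\ref{pluri and vis supersolution}, Theorem \ref{CP}) is built on. Your reduction of existence to Theorem \ref{thm:BEGZ2} and of uniqueness to Theorem \ref{CP} is fine, and your remark that two $\theta$-psh functions with minimal singularities agreeing on $\Amp(\alpha)$ agree on $X$ is correct.

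The genuine gap is in your central step, the passage from pluripotential to viscosity solution. You approximate $f$ by smooth $f_j$ and assert that one has ``genuine smooth solutions $\f_j$'' on $\Amp(\alpha)$ (possibly after shrinking to a model or perturbing $\theta$ by $\eps\omega$), which are classical and hence viscosity solutions, and that pluripotential stability plus ``local a priori estimates on $\Amp(\alpha)$'' gives locally uniform convergence $\f_j\to\f$. Neither assertion is available in a big class: smoothness of solutions of $(\theta+dd^c\f)^n=e^{\f}f_j\,dV$ on the ample locus is not known in general (there is no Zariski decomposition with nef positive part to reduce to the semipositive case, and this missing regularity is precisely why the viscosity approach only yields \emph{continuity} on $\Amp(\alpha)$); and $L^1$ or capacity stability from \cite{BEGZ} does not upgrade to locally uniform convergence without the very equicontinuity/continuity you are trying to prove, so the argument is circular at that point. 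The route actually used in \cite{EGZ15} (and the one compatible with this paper's toolkit) avoids smooth approximation altogether: one shows directly, via the local equivalence of pluripotential and viscosity notions for continuous densities (\cite[Theorem 1.9, Lemma 4.7]{EGZ11}, handled with care since the right-hand side $e^{\f}\mu$ involves the unknown, cf. the corrigendum \cite{EGZ17}), that the BEGZ solution $\f$ is a viscosity subsolution and that its lower semicontinuous regularization $\f_*$ is a viscosity supersolution, and then applies the comparison principle on $\Amp(\alpha)$ to get $\f^*\le\f_*$ there, which gives continuity on $\Amp(\alpha)$ and the viscosity solution property simultaneously. Your final paragraph correctly identifies the role of the barrier $\rho$ from Lemma \ref{rho} in the comparison step, but as written the middle of your proof rests on a regularity statement that is not known and is not needed in the cited argument.
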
    

\section{Degenerate complex Monge-Amp\`ere flows in big classes}
\label{sec:comparison_principle}
In this section we define viscosity solutions to degenerate complex Monge-Amp\`ere flows in big cohomology classes following \cite{CIL92,EGZ11,EGZ15,EGZ15b,EGZ16,EGZ18}. Our goal is to establish a general comparison principle for viscosity subsolutions and supersolution to the Monge-Amp\`ere flows in big cohomology classes. This extends some results in \cite{EGZ16,EGZ18} where the authors studied the case when the involved cohomology classes are semipositive and big. In particular, we do not assume any condition on either the derivative of subsolution  or the involved form $\theta_t$.  

\subsection{Viscosity subsolutions and supersolutions}\label{sect:adapted_defi}
Let $X$ be a $n$-dimensional compact K\"ahler manifold. Fix $\alpha$ is a big cohomology class and $\theta$ is a smooth $(1,1)$-form representing $\alpha$. Denote by $\Omega=\Amp(\alpha)$ the ample locus of $\alpha$. We extend some definitions  from the viscosity theory for  complex Monge-Amp\`ere flows developed in \cite{EGZ16, EGZ18}. 

\medskip
We consider the following {\sl degenerate complex Monge-Amp\`ere flow}
$$(\theta_t+dd^c\phi_t)^n=e^{\dot{\phi}_t + F(t,x,\phi_t)}\mu\quad (CMAF),$$
where 
\begin{itemize}
\item $F(t,x,r)$ is a continuous in $ X_T=[0,T)\times X$ and non decreasing in $r$.
\item $\mu(t,x)\geq 0$ is a family of bounded continuous volume forms on $X$,
\item $\theta_t(x)$ is a family of  smooth  $(1,1)$-forms representing big cohomology classes $\alpha_t$ such that $\Amp(\alpha_t)\supset \Omega$.
\item $\phi: [0,T)\times X\rightarrow \R$ is the unknown function with $ \phi_t(\cdot):=\phi(t,\cdot)$.
\end{itemize}
  
 \begin{defi}(Test functions)
Let $\f:X_T\rightarrow \R$ be any function and $(t_0,x_0)$ a given point such that $\f(t_0,x_0)$ is finite. An {\sl upper test function} (resp. a {\sl lower test function}) for $\f$ at $(t_0,x_0)$ is a $C^{1,2}$- function $q$ in a neighborhood of $(t_0,x_0)$ such that $\f(t_0,z_0)=q(t_0,x_0)$ and $\f\leq q$ (resp. $\f\geq q$) in a neighborhood of $(t_0,x_0)$. 
\end{defi}

\begin{defi}
A function $\f: X_T\rightarrow \R\cup\{-\infty\}$ is a {\sl viscosity subsolution} of $(CMAF)$ on $X_T$ if 
\begin{itemize}
\item $\f: \Omega_T\rightarrow \R$ is upper semi-continuous,
\item $\f(t,x) \leq V_{\theta_t}(x)+C, \forall (t,x)\in X_T $  for some  $C$ possibly depending on $T$. 
\item for any point $(t_0,x_0)\in \Omega_T$ and any upper test function $q$ for $\f$ at $(t_0,x_0)$, we have 
$$(\theta_{t_0}(x_0) +dd^c q(t_0,x_0))^n\geq e^{\dot{q}(t_0,x_0)+F(t_0,x_0,q(t_0,x_0))}\mu(t_0,x_0).$$
\end{itemize}
\end{defi}

\begin{defi}\label{def:super}
A function $\psi: X_T\rightarrow \R\cup\{-\infty,+\infty\}$ is a {\sl viscosity supersolution} of $(CMAF)$ on $X_T$ if 
\begin{itemize}
\item $\psi: \Omega_T\rightarrow \R$ is lower semi-continuous,
\item $\psi(t,x)\geq V_{\theta_t}(x) -C,\forall (t,x)\in X_T$, for some $C>0$ possibly depending on $T$. 
\item for any point $(t_0,x_0)\in\Omega_T$ and any lower test function $q$ for $\psi$ at $(t_0,x_0)$, we have 
$$(\theta_{t_0}(x_0) +dd^c q(t_0,x_0))_+^n\leq e^{\dot{q}(t_0,x_0)+F(t_0,x_0,q(t_0,x_0))}\mu(t_0,x_0),$$
where $\omega_{+}=\omega$ if the $(1,1)$-form $\omega$ is semipositive and $\omega_{+}=0$  otherwise. 

\end{itemize}
\end{defi}

\begin{notation}
 From now, we  also write  $\f_t$  for  a function $\f$ depending on $t$, i.e $\f_t(x)=\f(t,x)$, and $\partial_t \f_t$ or $\dot \f_t$ for its derivative in the time variable. 
\end{notation}
\begin{defi}
A {\sl viscosity solution} of $(CMAF)$ on $X_T$ is a function that is both a viscosity subsolution and a viscosity supersolution. In particular,  a viscosity solution is  continuous in $\Omega_T$ and have the same singularities with $V_{\theta_t}$ on $X_T$. 
\end{defi}

\begin{lem}\label{pluri and vis  subsolutions}
Let $u$ is a $\theta_t$-psh with minimal singularities such that 
\begin{itemize}
\item $u$ is continuous in $\Omega_T$.
\item $u$ admits a continuous partial $\partial_t u$ with respect to $t$.
\item for any $t\in (0,T)$ the restriction of $u_t$ of $u$ to $X_t=\{t\}\times X$ satisfies
$$(\theta_t+dd^c u_t)^n\geq e^{\dot{u}_t+ F(t,x,u_t)}\mu$$
in the pluripotential sense on $\Omega_t$.

\end{itemize}
\medskip
   
Then $u$ is a subsolution of $(CMAF)$ in $\Omega_T$ . 
\end{lem}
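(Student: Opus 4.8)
The plan is to show that the pluripotential subsolution property on each time slice $\Omega_t$, combined with the regularity of $u$ in the time variable, forces the viscosity subsolution inequality at every $(t_0,x_0)\in\Omega_T$. The boundary control $u(t,x)\le V_{\theta_t}(x)+C$ is immediate from the hypothesis that $u$ is $\theta_t$-psh with minimal singularities (uniformly in $t$ on compact time intervals), and upper semicontinuity on $\Omega_T$ is given. So the heart of the matter is the test-function inequality: given an upper test function $q\in C^{1,2}$ at $(t_0,x_0)$ with $u\le q$ near $(t_0,x_0)$ and $u(t_0,x_0)=q(t_0,x_0)$, we must verify
$$(\theta_{t_0}(x_0)+dd^c q(t_0,x_0))^n\ge e^{\dot q(t_0,x_0)+F(t_0,x_0,q(t_0,x_0))}\mu(t_0,x_0).$$

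First I would isolate the time derivative. Since $u$ has a continuous partial derivative $\partial_t u$ and $u\le q$ with equality at $(t_0,x_0)$, the one-variable function $t\mapsto u(t,x_0)-q(t,x_0)$ has a maximum at $t_0$; if $t_0>0$ this gives $\dot q(t_0,x_0)=\partial_t u(t_0,x_0)$, and if $t_0=0$ it gives $\dot q(0,x_0)\le\partial_t u(0,x_0)$ — in either case $\dot q(t_0,x_0)\le\partial_t u(t_0,x_0)$, which is the direction we need since it makes the right-hand side of the desired inequality smaller. Next, freeze time at $t=t_0$: the spatial function $x\mapsto q(t_0,x)$ is a $C^2$ upper test function for $x\mapsto u(t_0,x)$ at $x_0$ on $\Omega_{t_0}$. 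Here I would invoke the standard fact from the elliptic viscosity theory (the comparison of pluripotential and viscosity subsolutions, as in \cite{EGZ11,EGZ15}): a bounded — more precisely, minimal-singularities — $\theta_{t_0}$-psh function that satisfies $(\theta_{t_0}+dd^c u_{t_0})^n\ge g\,\mu$ in the pluripotential sense, with $g$ continuous, is a viscosity subsolution of the same equation on $\Omega_{t_0}$. Applying this with $g=e^{\dot u_{t_0}+F(t_0,\cdot,u_{t_0})}$ (which is continuous on $\Omega_{t_0}$ by the continuity hypotheses on $\partial_t u$, $u$ and $F$) yields
$$(\theta_{t_0}(x_0)+dd^c q(t_0,x_0))^n\ge e^{\partial_t u(t_0,x_0)+F(t_0,x_0,u(t_0,x_0))}\mu(t_0,x_0).$$
Finally, combine: $u(t_0,x_0)=q(t_0,x_0)$ gives equality of the $F$-arguments, and $\dot q(t_0,x_0)\le\partial_t u(t_0,x_0)$ together with monotonicity of the exponential gives $e^{\dot q+F}\le e^{\partial_t u+F}$, so the viscosity inequality follows.

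The main obstacle I anticipate is the rigorous passage from the pluripotential slice inequality to the viscosity slice inequality when $u_{t_0}$ is only of \emph{minimal singularities} rather than bounded: the density $e^{\dot u_{t_0}+F(t_0,\cdot,u_{t_0})}\mu$ need not be globally $L^p$ or bounded on all of $X$, only continuous on $\Omega_{t_0}$. This is exactly the situation already handled in the big-class elliptic theory recalled above (the viscosity comparison principle of Theorem~\ref{CP} and its consequences), and the localization argument — testing against $q(t_0,\cdot)$ purely inside the open set $\Omega_{t_0}$ where everything is smooth and the local Bedford--Taylor/viscosity dictionary applies verbatim — shows the global singular behaviour is irrelevant at an interior test point. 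A secondary technical point is making sure the upper test function $q$, which a priori is $C^{1,2}$ jointly, restricts to a genuine $C^2$ spatial test function after freezing $t_0$; this is routine, but one should note that the spatial touching is from above in a full neighbourhood of $x_0$ in $X$ (not merely along a slice), which is what the definition of upper test function provides.
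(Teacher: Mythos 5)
Your proposal follows essentially the same route as the paper's proof: freeze the time variable, note that $q(t_0,\cdot)$ is an upper test function for the slice $u_{t_0}$, invoke the elliptic pluripotential-to-viscosity implication of \cite{EGZ11} with the continuous density $e^{\partial_t u(t_0,\cdot)+F(t_0,\cdot,u_{t_0})}\mu$ on $\Amp(\alpha)$, and conclude using $\dot q(t_0,x_0)\le \partial_t u(t_0,x_0)$ together with the monotonicity of $F$ and $u(t_0,x_0)=q(t_0,x_0)$. One small correction: at an initial-time touching point $t_0=0$ the one-sided maximum of $t\mapsto u(t,x_0)-q(t,x_0)$ gives $\dot q(0,x_0)\ge \partial_t u(0,x_0)$, not $\le$ as you claim; this affects only $t_0=0$, where the hypothesis (the slice inequality is assumed only for $t\in(0,T)$) gives nothing anyway and which the paper's own proof likewise leaves untreated, so the substantive part of your argument, valid for $t_0\in(0,T)$, coincides with the paper's.
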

\begin{proof}
Suppose $q$ is a test function of $u$ at $(t_0,x_0)\in [0,T)\times\Amp(\alpha)$. Then we have $q(t_0,x)$ is also a test function to $u_{t_0}(x)$ at $x_0$. Moreover, by the hypothesis, $u_{t_0}(x)$ satisfies
$$(\theta_{t_0}+dd^cu_{t_0})^n(x)\geq \nu(x)$$
in the pluripotential sense, where $\nu=e^{(\partial_t  u)(t_0,x)+F(t_0,x,u(t_0,z,x)}\mu(t_0,x)$ is a volume form with continuous density in $\Amp(\alpha)$. Therefore, by \cite[Theorem 1.9]{EGZ11}, we get 
\begin{align*}
(\theta+dd^c q)^n(t_0,x_0)&\geq \nu(x_0)= e^{\partial_t u (t_0,x_0)+u(t_0,x_0)+F(t_0,x_0,u(t_0,x_0))}\mu(t_0,x_0)\\
&\geq e^{\partial_t q(t_0,x_0))+F(t_0,x_0,q(t_0,x_0))}\mu(t_0,x_0).
\end{align*}
Hence $u$ is a viscosity subsolution of $(CMAF)$.
\end{proof}

\begin{lem}\label{pluri and vis supersolution}
Let $v: \Omega_T\rightarrow \R\cup \{-\infty,+\infty\}$ be a a lsc function satisfying
\begin{itemize}
\item The restriction $v_t$ of $v$ to $\Omega_t=\{t\}\times \Omega$ is $\theta_t$-psh function with minimal singularities.
\item $v$ admits a continuous partial derivative $\partial_t v$ with respect to $t$.
\item there exists a function $w$ on $X$ such that $w$ is continuous on $\Omega$ and $\partial_tv_t+F(t,x,v)\geq w $. Moreover, $w$ satisfies
$$(\theta+dd^cv_t)\leq e^w \mu$$
in the pluripotential sense on $\Omega_t$. 
\end{itemize}
Then $v$ is a viscosity supersolution of $(CMAF)$ in $\Omega_T$.
\end{lem}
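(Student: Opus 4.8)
The plan is to mirror the proof of Lemma~\ref{pluri and vis subsolutions}, now working with \emph{lower} test functions and keeping careful track of the $(\cdot)_+$ convention built into the definition of a supersolution.

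First I would dispose of the two structural requirements. Lower semicontinuity on $\Omega_T$ is assumed. For the comparison $v(t,x)\ge V_{\theta_t}(x)-C$ on $X_T$: for each fixed $t$ the function $v_t$ has minimal singularities, hence $v_t\ge V_{\theta_t}-C_t$; since $t\mapsto\theta_t$ and $\partial_t v$ are continuous, this upgrades to a constant $C=C(T')$ uniform on any $[0,T']\subset[0,T)$ by a routine compactness argument.

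The heart of the matter is the test-function inequality. Fix $(t_0,x_0)\in\Omega_T$ and a lower test function $q$ for $v$ at $(t_0,x_0)$, so $q$ is $C^{1,2}$ near $(t_0,x_0)$ with $q\le v$ there and $q(t_0,x_0)=v(t_0,x_0)$. I would first make two elementary reductions: (a) $x\mapsto q(t_0,x)$ is a lower test function for $v_{t_0}$ at $x_0$; and (b) the function $t\mapsto v(t,x_0)-q(t,x_0)$ is nonnegative near $t_0$, vanishes at $t_0$, and is differentiable there (as $v$ admits a continuous $\partial_t v$ and $q\in C^{1,2}$), so $t_0$ is an interior minimum and therefore $\dot q(t_0,x_0)=\partial_t v(t_0,x_0)$; moreover $F(t_0,x_0,q(t_0,x_0))=F(t_0,x_0,v(t_0,x_0))$ since the values of $q$ and $v$ coincide at $(t_0,x_0)$. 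Now I split into two cases. If $\theta_{t_0}(x_0)+dd^c q(t_0,x_0)$ is not semipositive, then $(\cdot)_+^n=0$ and the supersolution inequality holds trivially, its right-hand side being nonnegative. If instead $\theta_{t_0}(x_0)+dd^c q(t_0,x_0)\ge 0$, I would invoke the local correspondence between pluripotential and viscosity supersolutions of the Monge-Amp\`ere equation with continuous density (the supersolution analogue of \cite[Theorem~1.9]{EGZ11}, cf. also the envelope/solution result recalled at the end of Section~\ref{pre}): working in a coordinate ball where $\theta_{t_0}=dd^c h$ with $h$ smooth, the function $h+v_{t_0}$ is psh with $(dd^c(h+v_{t_0}))^n\le e^w\mu(t_0,\cdot)$ in the pluripotential sense on $\Omega$ and continuous density there, while $h+q(t_0,\cdot)$ is a $C^2$ lower test function for it at $x_0$ with nonnegative complex Hessian; hence $(\theta_{t_0}(x_0)+dd^c q(t_0,x_0))^n\le e^{w(x_0)}\mu(t_0,x_0)$. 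Combining this with $w\le\partial_t v_t+F(t,x,v)$ and the identities in (b),
$$(\theta_{t_0}(x_0)+dd^c q(t_0,x_0))^n \le e^{w(x_0)}\mu(t_0,x_0) \le e^{\dot q(t_0,x_0)+F(t_0,x_0,q(t_0,x_0))}\mu(t_0,x_0),$$
and since in this case $(\theta_{t_0}(x_0)+dd^c q(t_0,x_0))_+^n=(\theta_{t_0}(x_0)+dd^c q(t_0,x_0))^n$, the required inequality follows; so $v$ is a viscosity supersolution of $(CMAF)$ on $\Omega_T$.

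The step I expect to require the most care is the second case above: passing from the \emph{pluripotential} inequality for $v_{t_0}$ to the pointwise bound on the test function. This is where one genuinely uses the local viscosity/pluripotential equivalence, and where the $(\cdot)_+$ convention is indispensable for handling the points at which $\theta_{t_0}+dd^c q$ fails to be positive or at which $\mu$ degenerates; everything else parallels Lemma~\ref{pluri and vis subsolutions}.
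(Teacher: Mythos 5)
Your proposal is correct and takes essentially the same route as the paper: slice the lower test function at $t_0$, invoke the pluripotential-to-viscosity supersolution correspondence for the static Monge-Amp\`ere equation with continuous density (the paper cites \cite[Lemma 4.7]{EGZ11} rather than localizing with a potential $h$), and then transfer the exponent from $\partial_t v$, $F(t_0,x_0,v)$ to $\dot q$, $F(t_0,x_0,q)$. You are in fact more explicit than the paper on the two points it glosses over, namely the $(\cdot)_+$ case split and the identity $\dot q(t_0,x_0)=\partial_t v(t_0,x_0)$ coming from the interior minimum of $v-q$ in the time variable.
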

\begin{proof}
Fix $(t_0,x_0)\in \Omega_T$. By the hypothesis, we have $$(\theta_{t_0}+dd^cv_{t_0})^n(x)\leq\nu(x),$$ where $\nu:=e^w\mu(t_0,x)$ is a volume form with continuous density in $\Omega$. It follows from \cite[Lemma 4.7]{EGZ11} that $v_{t_0}$ is a viscosity supersolution of the equation $(\theta+dd^c u)^n(x)=\nu(x)$ in $\Omega$. 

\medskip 
Now suppose $q$ is a lower test function of $v$ at $(t_0,z_0)\in \Omega_T$. Then $q(t_0,x_0)$ is also a lower test function for $v_{t_0}(x)$ at $x_0$. Therefore
\begin{align*}
(\theta_{t_0}+dd^c q)^n(t_0,x_0)&\leq e^{w(x_0)}\nu(x_0)\\
&\leq e^{(\partial_tv_t)(t_0,x_0)+F(t_0,x_0,v_{t_0})}\mu(t_0,x_0).
\end{align*}
Hence $v$ is a viscosity supersolution of $(CMAF)$ in $\Omega_T$.
\end{proof}

We show that subsolutions to parabolic Monge-Amp\`ere flows are plurisubharmonic in space variable.
\begin{prop}
Let $\f$ is a viscosity subsolution of $(CMAF)$. For each $t\in (0,T)$ we have $\f_t \in PSH(\Omega,\theta_t)$.
\end{prop}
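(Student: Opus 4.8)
The plan is to show that for fixed $t \in (0,T)$, the restriction $\f_t$ is plurisubharmonic with respect to $\theta_t$ on $\Omega$ by reducing the parabolic statement to the elliptic one via a standard viscosity argument. The key point is that whenever $q(x)$ is an upper test function (in the space variable alone) for $\f_t$ at a point $x_0 \in \Omega$, one can build a genuine parabolic upper test function for $\f$ at $(t_0,x_0)$ by adding a suitable function of time, and then exploit the subsolution inequality of $(CMAF)$ at that point.

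\medskip
First I would recall that a function on $\Omega$ that is not identically $-\infty$ and is $\theta_t$-psh can be characterized locally: in a holomorphic chart where $\theta_t = dd^c g$ for a smooth local potential $g$, the function $\f_t + g$ should be plurisubharmonic, equivalently $dd^c q(x_0) + \theta_t(x_0) \geq 0$ for every $C^2$ upper test function $q$ for $\f_t$ at every $x_0$ (this is the viscosity characterization of plurisubharmonicity, cf. the elliptic discussion in \cite{EGZ11,EGZ15}). Since $\f_t$ is upper semicontinuous on $\Omega$ and $\f_t \leq V_{\theta_t} + C \not\equiv -\infty$, it suffices to verify this test-function inequality. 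So fix $x_0 \in \Omega$ and an upper test function $q$ for the map $x \mapsto \f(t_0,x)$ at $x_0$, with $t_0 = t$.

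\medskip
Next I would promote $q$ to a parabolic test function. For $\e > 0$ small, set $Q_\e(s,x) := q(x) + \frac{1}{\e}(s - t_0)^2 + C_\e (s - t_0)$ where $C_\e$ is a large constant to be chosen. Near $(t_0,x_0)$ the quadratic penalty forces $Q_\e \geq \f$ in a space-time neighborhood provided $\f$ is locally bounded above, which it is since $\f \leq V_{\theta_t} + C$ and $V_{\theta_t}$ is locally bounded on $\Omega$; more carefully, one uses upper semicontinuity of $\f$ together with $\f(t_0,x) \leq q(x)$ to get, on a small product neighborhood, $\f(s,x) \leq q(x) + o(1)$, and the term $\frac{1}{\e}(s-t_0)^2$ dominates the $o(1)$ away from $s = t_0$ while at $s = t_0$ we have equality structure. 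Then $Q_\e$ is an upper test function for $\f$ at $(t_0,x_0)$, so the subsolution inequality gives
$$(\theta_{t_0}(x_0) + dd^c q(x_0))^n \geq e^{\dot Q_\e(t_0,x_0) + F(t_0,x_0,q(x_0))}\,\mu(t_0,x_0) = e^{C_\e + F(t_0,x_0,q(x_0))}\,\mu(t_0,x_0),$$
using $\dot Q_\e(t_0,x_0) = C_\e$. If $\mu(t_0,x_0) > 0$, letting $C_\e \to +\infty$ forces the left-hand side to be $+\infty$, which is impossible for a fixed $(1,1)$-form; hence the only way out is that $\theta_{t_0}(x_0) + dd^c q(x_0)$ is \emph{not} positive definite is excluded — wait, we need the opposite: a non-semipositive Hermitian form can still have positive determinant only if it has an even number of negative eigenvalues. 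To rule this out cleanly I would instead argue as in \cite{EGZ15}: the maximality/positivity of the Monge-Amp\`ere operator along test functions forces $\theta_{t_0}(x_0) + dd^c q(x_0) \geq 0$, because otherwise one perturbs $q$ by an arbitrarily small positive-definite quadratic to violate the inequality with the correct sign of the determinant. Concretely, if $\theta_{t_0}(x_0)+dd^cq(x_0)$ had a negative eigenvalue, then for small $\eta>0$ the function $q_\eta(x) = q(x) + \eta|x - x_0|^2$ is still an upper test function for $\f_t$ at $x_0$ (since adding a nonnegative term preserves $\f_t \leq q_\eta$ and equality at $x_0$), and one may choose a direction to make the determinant of $\theta_{t_0}(x_0)+dd^cq_\eta(x_0)$ negative or the form still non-semipositive with the parabolic inequality failing — the standard conclusion is that $(\theta_{t_0}(x_0)+dd^cq(x_0))^n \geq (\text{positive})$ together with the possibility of such perturbations forces semipositivity. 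The main obstacle is precisely this sign bookkeeping: carefully justifying, as in the elliptic case of \cite{EGZ11,EGZ15}, that the differential inequality in the definition of viscosity subsolution entails $\theta_{t_0} + dd^c q \geq 0$ at the contact point, and hence that $\f_t$ is $\theta_t$-psh on $\Omega$; once that is in place the proof is complete.
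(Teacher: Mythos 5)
Your reduction to the elliptic statement is the right idea in spirit, but the key step fails as written: the function $Q_\e(s,x)=q(x)+\tfrac1\e(s-t_0)^2+C_\e(s-t_0)$ is in general \emph{not} an upper test function for $\f$ at $(t_0,x_0)$. A spatial test $q\ge \f_{t_0}$ gives no control on $\f(s,\cdot)$ for $s\ne t_0$: upper semicontinuity only yields, for each \emph{fixed} $\delta>0$, a space--time neighborhood on which $\f(s,x)\le q(x)+\delta$; it does not give $\f(s,x)\le q(x)+o(1)$ uniformly as $s\to t_0$. Since your penalty $\tfrac1\e(s-t_0)^2+C_\e(s-t_0)$ tends to $0$ as $s\to t_0$ (and is even negative for $s<t_0$), it cannot absorb such a fixed $\delta$, so $Q_\e\ge\f$ may fail in every neighborhood of $(t_0,x_0)$. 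The standard repair is to let the contact point move: maximize $\f(s,x)-q(x)-|x-x_0|^4-\tfrac1\e(s-t_0)^2$ over a compact cylinder, obtain interior maximum points $(s_\e,x_\e)\to(t_0,x_0)$ with $\f(s_\e,x_\e)\to\f(t_0,x_0)$, apply the parabolic subsolution inequality at $(s_\e,x_\e)$ (whose right-hand side is $\ge 0$ because $\mu\ge 0$), and pass to the limit. The paper avoids redoing even this: it localizes, writes $\theta_t=dd^ch_t$ on a chart, notes that $u=\f+h$ is a viscosity subsolution of the \emph{local} flow $(dd^cu_t)^n=e^{\dot u+\tilde F}\mu$, and quotes the known local result that such subsolutions are plurisubharmonic in $x$ for each fixed $t$ (\cite[Corollary 3.7]{EGZ15}); no new test-function construction is attempted.

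There are also unresolved sign issues at the end of your argument, which you acknowledge but do not close. The clean bookkeeping is: since the right-hand side of the subsolution inequality is nonnegative, every genuine parabolic upper test satisfies $(\theta_{t_0}(x_0)+dd^cq(t_0,x_0))^n\ge 0$; parabolic tests are stable under adding a semipositive spatial quadratic $\sum P_{j\bar k}(x-x_0)_j\overline{(x-x_0)}_k$, so $\det\bigl(\theta_{t_0}(x_0)+dd^cq(t_0,x_0)+P\bigr)\ge 0$ for \emph{all} Hermitian $P\ge 0$, and a negative eigenvalue of $\theta_{t_0}(x_0)+dd^cq(t_0,x_0)$ would allow a choice of $P$ making this determinant negative; hence the form is semipositive at the contact point, and the viscosity characterization of plurisubharmonicity (as in \cite{EGZ11}) concludes. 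This also shows that your case distinction $\mu(t_0,x_0)>0$ and the limit $C_\e\to+\infty$ are unnecessary --- and the case $\mu(t_0,x_0)=0$ cannot be discarded here, since $(CMAF)$ only assumes $\mu\ge 0$. None of this, however, rescues the argument without first repairing the construction of the parabolic test function.
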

\begin{proof}
Observe first that the problem is local. For any $x_0\in \Omega$, we can choose a small neighborhood  $U$ of $x_0$ such that $\theta_t=dd^c h_t$ for all $ (t,x)\in  (t-\e,t+\e)\times U$ for some $\e>0$ sufficiently small.  We then infers  that $u=h+\f$    is a viscosity subsolution of the local equation
\begin{equation}
(dd^cu_t)^n=e^{\dot{u}+\tilde{F}(t,z,u_t)}\mu
\end{equation}
on $(t-\e,t+\e)\times U$, where $\tilde{F}(t,z,u_t)=F(t,z, u_t-h_t) -\dot h_t$.  It follows from \cite[Corollary 3.7]{EGZ15} that $u_{t_0}$ is plurisubharmonic on $U$ for any $t_0\in (t-\e,t+\e)$.  Therefore we have $\f_t\in PSH(\Omega,\theta_t)$ as required.  
 \end{proof}

\subsection{A useful local comparison principle}
We recall here a useful lemma due to \cite[Corollary 3.9]{EGZ18} for the local equation
\begin{equation*}
 (dd^cu_t)^n=e^{\dot{u}_t+F(t,z,u_t)}\mu(t,z),  \quad\quad (MAF)_{F,\mu}
\end{equation*}
with the initial condition $u(0,z)=u_0$ a continuous psh function in $D\Subset\C^n$. 

\begin{lem}\label{local comparison}
Assume that $\mu(t,z)\geq 0$ be a continuous family of volume forms on some domain $D\Subset \mathbb{C}^n$. Let $u:[0,T)\times D\rightarrow \R$ be a viscosity subsolution to the local equation $(MAF)_{F,\mu}$ and let $v:[0,T)\times D\rightarrow \R$ be a supersolution to the local equation $(MAF)_{G,\mu}$. Assume that
\begin{itemize}
\item the function $u_t-v_t$ achieves a local maximum at some $(t_0,z_0)\in (0,T)\times D$.
\item there exits a constant $c_1>0$ such that $z\mapsto u(t,z)-2c_1|z^2|$ is a plurisubhamonic near $z_0$ and $t$ near $t_0$.
\end{itemize}
If either $\mu (t_0,z_0)>0$ or $\mu=\mu(z)$, then 
\begin{equation}
F(t_0,z_0,u(t_0,z_0))\leq G(t_0,z_0,v(t_0,z_0)).
\end{equation}
\end{lem}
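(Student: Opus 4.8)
The plan is to reduce the statement to a pointwise algebraic inequality coming from the structure of test functions at a maximum, exploiting the hypothesis that $u$ has strictly positive complex Hessian (after subtracting $2c_1|z|^2$) near $(t_0,z_0)$. First, I would apply the Jensen--Ishii maximum principle (a.k.a.\ the theorem on sums, see \cite{CIL92}) to the function $u_t - v_t$ at its local maximum $(t_0,z_0)$: this produces, for each $\eta>0$, a real number $\tau$, a real $(2n\times 2n)$ symmetric matrix, and from it a complex Hessian candidate $H$, together with upper test jets of the form $(\tau, A)$ for $u$ and $(\tau, B)$ for $v$ at $(t_0,z_0)$, with $A \leq B$ in the sense of Hermitian forms and $A$ controllable in terms of $H$. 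The key point — and this is where the strict plurisubharmonicity enters — is that one may choose these jets so that $A \geq c_1 I > 0$, so that $A$ (and hence $B \geq A$) is a genuine positive definite Hermitian matrix on which $\log\det$ is defined and finite.

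Next, I would feed these jets into the viscosity inequalities. Because $u$ is a subsolution of $(MAF)_{F,\mu}$ and $A>0$, an upper test function realizing the jet $(\tau,A)$ gives
\[
\det A \;\geq\; e^{\tau + F(t_0,z_0,u(t_0,z_0))}\,\mu(t_0,z_0).
\]
Because $v$ is a supersolution of $(MAF)_{G,\mu}$, a lower test function realizing the jet $(\tau, B)$ gives
\[
(\det B)_+ \;\leq\; e^{\tau + G(t_0,z_0,v(t_0,z_0))}\,\mu(t_0,z_0);
\]
and since $B \geq A > 0$ we have $(\det B)_+ = \det B > 0$. Combining the two displays and using monotonicity of $\det$ on positive Hermitian matrices, $\det A \leq \det B$, yields
\[
e^{\tau + F(t_0,z_0,u(t_0,z_0))}\,\mu(t_0,z_0) \;\leq\; e^{\tau + G(t_0,z_0,v(t_0,z_0))}\,\mu(t_0,z_0).
\]
Here the hypothesis $\mu(t_0,z_0)>0$ lets us cancel the (finite, nonzero) factor $\mu(t_0,z_0)$ and the common factor $e^{\tau}$, leaving $F(t_0,z_0,u(t_0,z_0)) \leq G(t_0,z_0,v(t_0,z_0))$, as desired. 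In the alternative case $\mu = \mu(z)$ one argues at points $z$ near $z_0$ where $\mu(z)>0$ (using that $\mu \geq 0$ is continuous and $u-v$ has a strict-enough max after a small quadratic perturbation, so the max point can be pushed to such a $z$, or $\mu(z_0)=0$ forces the inequality trivially via an approximation), then passes to the limit.

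The main obstacle I anticipate is the bookkeeping in the first step: arranging the output of the theorem on sums so that the complex Hessian block $A$ is simultaneously (i) a valid upper test jet for $u$, (ii) bounded below by $c_1 I$, and (iii) dominated by the corresponding jet $B$ for $v$, uniformly enough to pass to the limit as the doubling parameter $\eta \to 0$. This is precisely the role of the technical hypothesis that $z\mapsto u(t,z) - 2c_1|z|^2$ is plurisubharmonic near $(t_0,z_0)$: it guarantees that every relevant upper test jet of $u$ has complex Hessian $\geq 2c_1 I$ up to the error introduced by doubling, which after choosing $\eta$ small keeps $A \geq c_1 I$. Managing the real-vs-complex Hessian conversion (only the $(1,1)$-part of the real Hessian matters for the Monge--Amp\`ere operator, and one must check the positivity survives this projection) is the delicate part; the rest is a direct consequence of the concavity/monotonicity of $\log\det$ already exploited elsewhere in the paper. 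The time-derivative components match trivially since $u_t - v_t$ has a max at $t_0$ in the interior, forcing equal $\tau$'s for both jets.
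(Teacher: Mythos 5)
Your plan for the first alternative, $\mu(t_0,z_0)>0$, is essentially the right argument, and it is the one behind the result the paper actually invokes here: note that the paper does not prove this lemma, it quotes it from \cite[Corollary 3.9]{EGZ18}, so the comparison is with that proof. There, as in your sketch, one doubles the space variable, applies the parabolic theorem on sums of \cite{CIL92} at the penalized maxima $(t_\eta,x_\eta)$, $(t_\eta,y_\eta)$ (not at $(t_0,z_0)$ itself), uses that the hypothesis ``$u(t,\cdot)-2c_1|z|^2$ psh'' forces every closure upper jet of $u$ to have complex Hessian $\geq 2c_1 I$, so that $\det$-monotonicity applies, $(\det B)_+=\det B>0$, the common factor $e^{\tau}$ cancels at fixed doubling parameter (no bound on $\tau$ is needed), and one passes to the limit using $u(t_\eta,x_\eta)\to u(t_0,z_0)$, $v(t_\eta,y_\eta)\to v(t_0,z_0)$ and continuity of $F$, $G$, $\mu$. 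Up to this bookkeeping, which you flag yourself, that half is fine.

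The genuine gap is the second alternative, $\mu=\mu(z)$ with $\mu(z_0)=0$, which is not a marginal case: it is exactly the situation needed in the proof of Theorem \ref{comparison_strict_positive}, where the localized measure $z_*\mu$ is time-independent but only $\geq 0$. Your proposed fix (``push the max point to a nearby point where $\mu>0$, or conclude trivially by approximation'') does not work: a small quadratic perturbation does not let you choose where the perturbed maximum lands (all nearby maxima of the usc function $u-v$ may lie inside $\{\mu=0\}$), and the desired inequality concerns the values $u(t_0,z_0)$, $v(t_0,z_0)$ at the original point. Moreover the limit of your chain of inequalities degenerates: one obtains $e^{F(t_\eta,x_\eta,u)}\mu(x_\eta)\leq e^{G(t_\eta,y_\eta,v)}\mu(y_\eta)$ with $\mu$ evaluated at two \emph{different} spatial points, both tending to a zero of $\mu$ with no control on the ratio, so the limit is just $0\leq 0$; and you cannot rescue it by bounding $\tau$, since $(2c_1)^n\leq \det B\leq e^{\tau+G}\mu(y_\eta)$ only shows $\tau\to+\infty$, while $\det A$ grows with the doubling parameter. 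The missing idea is precisely what the hypothesis ``$\mu$ independent of $t$'' is for: one regularizes the subsolution by a sup-convolution in the \emph{time} variable only, which perturbs $F$ harmlessly but leaves $\mu$ unchanged, and makes the regularized subsolution Lipschitz in $t$, hence bounds the time components $\tau$ of its jets; then $(2c_1)^n\leq e^{\tau+G}\mu(y_\eta)$ forces $\mu$ to stay bounded away from $0$ near the relevant maximum points, the two factors $\mu(x_\eta)$, $\mu(y_\eta)$ converge to the same positive value and cancel, and one concludes by letting the time-regularization parameter go to $0$. Without an argument of this type (or some equivalent use of the time-independence), your case distinction is incomplete and the lemma, as you prove it, would not cover the degenerate measures this paper actually feeds into it.
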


\subsection{Comparison principle for complex Monge-Amp\`ere flows}
Let $(X,\omega)$ be a compact K\"ahler manifold. Fix $\alpha$ is a big cohomology class and $\theta$ is a smooth $(1,1)$-form representing $\alpha$. 
We consider the following  degenerate complex Monge-Amp\`ere flow
$$(\theta_t+dd^c\phi_t)^n=e^{\dot{\phi}_t + F(t,x,\phi_t)}\mu\quad (CMAF),$$
where 
\begin{itemize}
\item $F(t,x,r)$ is a continuous in $ X_T=[0,T)\times X$ and non decreasing in $r$.
\item $\mu=fdV\geq 0$ is a   continuous volume form on $X$,
\item $\theta_t(x)$ is a family of  smooth  $(1,1)$-forms representing big cohomology classes $\alpha_t$ such that $\Amp(\alpha_t)\supset \Omega:= \Amp(\alpha)$, 
\item $\phi: [0,T)\times X\rightarrow \R$ is the unknown function with $ \phi_t(\cdot):=\phi(t,\cdot)$.
\end{itemize}

\medskip
We now prove the following comparison principle  extending the one in \cite{EGZ16,EGZ18} where the class $\{\theta_t\}$ is semipositive and big.  In particular, we exploit the concavity of $\log\det$ avoiding the difficulties from the  time derivative  of the subsolution. 

\begin{thm}\label{comparison_strict_positive}
 Assume that  $\theta_t\geq \theta$ for a smooth $(1,1)$-form $\theta$ in some fixed big class $\alpha$. Let $\f$ (resp. $\psi$) be a viscosity subsolution (resp.  a supersolution) to $(CMAF)$. 
Then  for any  $ (t,x)\in  [0,T)\times \Amp(\alpha)
$
$$(\f-\psi)(t,z)\leq \max\{ \sup_{\{0\}\times X}(\f-\psi)^* ,0\}  $$
 \end{thm}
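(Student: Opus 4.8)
The plan is to run the viscosity comparison scheme of \cite{EGZ16,EGZ18} in the big class $\alpha$; the one genuinely new point is to \emph{graft} onto the subsolution a multiple of the strictly positive potential $\rho$ of Lemma \ref{rho} and to make this graft admissible by exploiting the concavity of $\log\det$, which is exactly what lets one dispense with any hypothesis on $\partial_t\f$ or on $\theta_t$. Fix once and for all a point $(t^*,x^*)\in[0,T)\times\Omega$ (write $\Omega=\Amp(\alpha)$) and a time $T'\in(t^*,T)$; it suffices to bound $(\f-\psi)(t^*,x^*)$. Set $m:=\max\{\sup_{\{0\}\times X}(\f-\psi)^*,\,0\}$.

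\emph{The graft (the main obstacle).} For $\lambda\in(0,1)$ put $\theta^\lambda_t:=(1-\lambda)\theta_t+\lambda\theta$ and $u_\lambda:=(1-\lambda)\f+\lambda\rho$. Since $\theta_t\geq\theta$ one has $\theta\leq\theta^\lambda_t\leq\theta_t$, the class $\{\theta^\lambda_t\}$ is big with $\Amp(\{\theta^\lambda_t\})\supseteq\Omega$ (add the smooth semipositive form $\theta^\lambda_t-\theta$ to the K\"ahler current with analytic singularities defining $\Omega$), $\rho\in PSH(X,\theta^\lambda_t)$ with $\theta^\lambda_t+dd^c\rho\geq\eps\,\omega$, and $\theta^\lambda_t+dd^cu_\lambda\geq\lambda\eps\,\omega$. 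I claim $u_\lambda$ is a viscosity subsolution, on $\Omega_{T'}$, of the flow with form $\theta^\lambda_t$, the \emph{same} volume form $\mu$, and nonlinearity
$$F_\lambda(t,x,r):=(1-\lambda)\,F\!\left(t,x,\tfrac{r-\lambda\rho(x)}{1-\lambda}\right)+\lambda\log c,\qquad c:=\frac{\eps^n}{\|f\|_\infty}\inf_X\frac{\omega^n}{dV}>0,$$
a continuous function, non-decreasing in $r$, with $F_\lambda\to F$ as $\lambda\to0$. Indeed, if $\tilde q$ is an upper test function for $u_\lambda$, then $q:=(\tilde q-\lambda\rho)/(1-\lambda)$ is one for $\f$; as $\f_t$ is $\theta_t$-psh one gets $\theta_t+dd^cq\geq0$ at the contact point, so $\theta^\lambda_t+dd^c\tilde q=(1-\lambda)(\theta_t+dd^cq)+\lambda(\theta+dd^c\rho)$ is a convex combination of a positive semidefinite form and a form $\geq\eps\omega$; by the concavity of $\log\det$, together with $(\theta+dd^c\rho)^n\geq(\eps\omega)^n\geq c\,\mu$ and the subsolution inequality for $q$,
$$(\theta^\lambda_t+dd^c\tilde q)^n\ \geq\ \bigl[(\theta_t+dd^cq)^n\bigr]^{1-\lambda}\bigl[(\theta+dd^c\rho)^n\bigr]^{\lambda}\ \geq\ e^{(1-\lambda)(\dot q+F(t,x,q))}c^\lambda\,\mu\ =\ e^{\dot{\tilde q}+F_\lambda(t,x,\tilde q)}\,\mu ,$$
the decisive identity being $\dot{\tilde q}=(1-\lambda)\dot q$ (as $\rho$ is $t$-independent): the $(1-\lambda)$ from the logarithmic homogeneity of $\det$ cancels the $(1-\lambda)^{-1}$ the graft puts in front of $\dot q$. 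Symmetrically, since $\theta^\lambda_t\leq\theta_t$ and $F$ is non-decreasing, for every $\eps>0$ the function $\psi+\eps t$ is a viscosity supersolution, on $\Omega_{T'}$, of the flow with form $\theta^\lambda_t$, volume form $\mu$, and nonlinearity $F(t,x,r)-\eps$.

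\emph{Reduction to an interior maximum and the local comparison principle.} On the compact set $[0,T']\times X$ consider $G_{\lambda,\eps}:=u_\lambda-\psi-\eps t$. From $\f\leq V_{\theta_t}+C$, $\psi\geq V_{\theta_t}-C$, $\rho\leq V_\theta\leq V_{\theta_t}$ and Lemma \ref{rho}(4), one checks that $G_{\lambda,\eps}$ is bounded above by $(2-\lambda)C$ and that $G_{\lambda,\eps}(t,x)\to-\infty$ as $x\to\partial\Omega$ uniformly in $t$, so its supremum is attained at some $(t_0,x_0)\in[0,T']\times\Omega$. If $t_0=0$, then $\rho-\psi\leq C$ on $\{0\}\times\Omega$ gives $G_{\lambda,\eps}(t_0,x_0)\leq m+\lambda C$. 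If $t_0>0$ (so $0<t_0\leq T'<T$), localize at $(t_0,x_0)$ in a coordinate ball $D\ni x_0$ where $\theta^\lambda_t=dd^ch_t$ for $t$ near $t_0$, and apply Lemma \ref{local comparison} to $h_t+u_\lambda$ (local subsolution) and $h_t+\psi+\eps t$ (local supersolution): the strict plurisubharmonicity hypothesis holds because $\theta^\lambda_t+dd^cu_\lambda\geq\lambda\eps\,\omega$, the local maximum sits at $(t_0,x_0)\in(0,T)\times D$, and the hypothesis on the volume form holds because $\mu$ does not depend on $t$. Unwinding the conclusion ($\dot h$ cancels) gives
$$(1-\lambda)F(t_0,x_0,\f(t_0,x_0))+\lambda\log c\ \leq\ F\bigl(t_0,x_0,\psi(t_0,x_0)+\eps t_0\bigr)-\eps .$$
If $\f(t_0,x_0)\geq\psi(t_0,x_0)+\eps t_0$, monotonicity of $F$ turns this into $\eps\leq\lambda\bigl(F(t_0,x_0,\psi(t_0,x_0)+\eps t_0)-\log c\bigr)$; but in this case $\psi(t_0,x_0)+\eps t_0\leq\f(t_0,x_0)\leq V_{\theta_{t_0}}(x_0)+C\leq C$, so $F(t_0,x_0,\psi(t_0,x_0)+\eps t_0)\leq\max_{[0,T']\times X}F(\cdot,\cdot,C)=:C_F$ and the inequality fails as soon as $\lambda(C_F-\log c)<\eps$; for such $\lambda$ we therefore have $\f(t_0,x_0)<\psi(t_0,x_0)+\eps t_0$, and substituting into $G_{\lambda,\eps}$ and using $\rho\leq V_{\theta_{t_0}}$ and $\psi\geq V_{\theta_{t_0}}-C$ gives $G_{\lambda,\eps}(t_0,x_0)\leq\lambda C$.

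\emph{Conclusion.} In all cases $\sup_{[0,T']\times X}G_{\lambda,\eps}\leq m+\lambda C$ once $\lambda$ is small depending on $\eps$. Since $G_{\lambda,\eps}(t^*,x^*)=(1-\lambda)\f(t^*,x^*)+\lambda\rho(x^*)-\psi(t^*,x^*)-\eps t^*$ and $\rho(x^*),\f(t^*,x^*)$ are finite (if $\f(t^*,x^*)=-\infty$ there is nothing to prove), rearranging yields $(\f-\psi)(t^*,x^*)\leq m+\eps t^*+\lambda\bigl(C-\rho(x^*)+\f(t^*,x^*)\bigr)$; letting $\lambda\downarrow0$ and then $\eps\downarrow0$ gives $(\f-\psi)(t^*,x^*)\leq m$. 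The only delicate ingredient is the graft of the second paragraph, where the concavity of $\log\det$ replaces the hypotheses on $\partial_t\f$ and $\theta_t$ made in \cite{EGZ16,EGZ18}; the interior-maximum reduction and the two limits are otherwise routine, the big-class setting only making the $\partial\Omega$-barrier bookkeeping heavier.
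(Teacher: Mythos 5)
Your argument is, at its core, the same as the paper's: graft $\lambda\rho$ (from Lemma \ref{rho}) onto the subsolution, make the graft admissible via the concavity of $\log\det$ together with $\theta+dd^c\rho\geq\e\omega$, use $\rho-V_{\theta_t}\to-\infty$ near $\partial\Amp(\alpha)$ to force the maximum of the perturbed difference into $\Omega$, invoke Lemma \ref{local comparison} there, and let the parameters tend to $0$. The cosmetic differences (you tilt the reference form to $\theta^\lambda_t=(1-\lambda)\theta_t+\lambda\theta$ and put the strictness $-\e$ on the supersolution via $\psi+\e t$, whereas the paper keeps $\theta_t$ and perturbs the subsolution by $-A\lambda t-\delta/(T-t)$) do not change the mechanism; your computations at the contact point, the choice of $c$, the $t_0=0$ case and the dichotomy $\f(t_0,x_0)\gtrless\psi(t_0,x_0)+\e t_0$ are all correct.

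There is, however, one genuine gap: the handling of the terminal time. You maximize $G_{\lambda,\e}=u_\lambda-\psi-\e t$ over the compact slab $[0,T']\times X$, and nothing prevents the maximum from being attained at $t_0=T'$. At such a point $G_{\lambda,\e}$ is only known to be maximal for times $t\leq T'$, so $(t_0,x_0)$ is not a \emph{local} maximum of $u_t-v_t$ in a two-sided time neighborhood, which is exactly the hypothesis of Lemma \ref{local comparison} ("achieves a local maximum at some $(t_0,z_0)\in(0,T)\times D$"); the $-\e t$ term gives strictness in the nonlinearity but does not exclude $t_0=T'$. This is precisely what the paper's penalization $-\delta/(T-t)$ is for: it sends $\f_\lambda-\psi$ to $-\infty$ as $t\to T$, so the maximum over $[0,T)\times X$ is attained at an interior time and the local lemma applies verbatim. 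The fix for your write-up is routine — add a term $-\sigma/(T'-t)$ (or work on $[0,T'')$ with the paper's $-\delta/(T''-t)$), note that it only costs $\sigma/(T'-t^*)$ at the fixed point $(t^*,x^*)$, and let $\sigma\to0$ at the end; alternatively one must prove (or cite) a one-sided-in-time version of the local comparison lemma, which is not the statement quoted in the paper. As written, the application of Lemma \ref{local comparison} when $t_0=T'$ is not justified.
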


\begin{proof} 
Since $\alpha=\{\theta\}$ is big,  by Lemma \ref{rho},  there exists a $\theta$-psh function $\rho \leq \f$  satisfying
$$\theta+dd^c\rho\geq C\omega,$$
$\rho$ is smooth in the ample locus $\Amp(\alpha)$ and  $(\rho(x)-V_\theta) \rightarrow -\infty$ as $x\rightarrow \partial \Omega$. Since $\theta_t\geq \theta$, we have $V_\theta\leq V_{\theta_t}$, so  $(\rho(x)-V_{\theta_t}) \rightarrow -\infty$ as $x\rightarrow \partial \Omega$.

\medskip
Now fix $\delta>0$, $\lambda \in (0,1)$  and set
$$\f_\lambda(t,x):=(1-\lambda)\f(t,x)+\lambda \rho(x) -\frac{\delta}{T-t}-A\lambda t \leq \f,$$
where $A$ will be chosen hereafter.  Then $\f_\lambda$ is a strictly $\theta_t-$psh function since
$\theta+dd^c\f_t\geq \lambda (\theta+dd^c\rho)\geq C\omega$.
We now prove that $\f_\lambda$ satisfies 
\begin{equation}\label{subsol_phi_lambda}
(\theta_t+dd^c \f_\lambda)^n\geq e^{\dt \f_\lambda + (1-\lambda )F(t,x,\f_\lambda)+n \lambda \log C +A + \frac{\delta}{(T-t)^2} } \mu
\end{equation}
in the viscosity sense. Indeed, let $q$ be  an upper test for $\f_\lambda$ at $(t_0,z_0)$ then 
$$\tilde{q}= (1-\lambda)^{-1}\left(q-\lambda \rho(x) +\frac{\delta}{T-t}+A\lambda t \right)$$ 
is an upper test for $\f$ at $(t_0,z_0)$. If $\mu(z_0)=0$ then we have already $(\theta_t+dd^c \tilde{q})^n\geq 0 $ at $(t_0,z_0)$ hence   $(\theta_t+dd^c q)^n\geq 0 =\mu(z_0)$. Now assume that  $\mu(z_0)\neq 0$. Using the concavity of $\log\det$ and  $\theta_t+dd^c\rho \geq\theta+dd^c \rho\geq  C\omega_X$,  we have at  $(t_0,z_0)$
\begin{eqnarray*}
\log\frac{(\theta_t+ dd^c q)^n}{\mu} &=&\log\frac{ \left((1-\lambda)(\theta_t+dd^c \tilde q) +\lambda (\theta+dd^c\rho)\right)^n}{\mu}\\
&\geq & (1-\lambda )\log\frac{ (\theta_t+dd^c \tilde q)^n}{\mu} +\lambda \log\frac{ (\theta_t+dd^c\rho)^n}{\mu}\\
&\geq & (1-\lambda )\log\frac{ (\theta_t+dd^c \tilde q)^n}{\mu} +n\lambda\log C.
\end{eqnarray*}
It follows from the definition of viscosity subsolution  and the inequality above that   at  $(t_0,z_0)$
\begin{eqnarray*}
\dt q &=&  (1-\lambda)\dt  \tilde{q} +A\lambda+\frac{\delta}{(T-t)^2}\\
&\leq &(1-\lambda) \left(\log \frac{(\theta_t+dd^c \tilde q)^n}{\mu} -F(t_0,z_0, \tilde{q}(t_0,z_0))\right) -A\lambda-\frac{\delta}{(T-t)^2}\\
&\leq &  \log\frac{(\theta_t+ dd^c q)^n}{\mu}  -(1-\lambda )F(t_0,z_0, \tilde{q}(t_0,z_0) )-n\lambda\log C-A\lambda-\frac{\delta}{(T-t)^2}\\
&\leq&  \log\frac{(\theta_t+ dd^c q)^n}{\mu}  -(1-\lambda )F(t_0,z_0, q(t_0,z_0)) -n\lambda\log C-A\lambda-\frac{\delta}{(T-t)^2},
\end{eqnarray*} 
where the last inequality comes from the fact that $F$ is non-decreasing in the third variable and $\f_\lambda\leq \f$.   This implies \eqref{subsol_phi_lambda} as required. 

\medskip
By the definition of viscosity sub/super solutions, ${\f}_\lambda-{\psi}\leq \f-\psi$ is bounded from above, we can extend it as an usc function $({\f}_\lambda-\psi)^*$ on $X$. Moreover
\begin{eqnarray}
\f_\lambda-\psi =(1-\lambda )(\f-V_{\theta_t})-(\psi-V_{\theta_{t}})+\lambda(\rho-V_{\theta_t})  -\frac{\delta}{T-t}-A\lambda t,
\end{eqnarray}
hence $\f_\lambda-\psi   \rightarrow -\infty$ as  either $x\rightarrow \partial \Omega$ or $t\rightarrow T$. It follows that there exists $(t_0, x_0)\in [0,T)\times \Omega$ such that 
$$\sup_{X_T} (\f_\lambda-\psi )^*=\f_\lambda (t_0,x_0)-{\psi}(t_0,x_0). $$

 The idea is to localize near $x_0$ and use Lemma \ref{local comparison}. We choose the complex coordinates $z=(z_1,\ldots,z_n)$ near $x_0$ defining a biholomorphism identifying a closed neighborhood $\bar{B}$ of $x_0$ to the closed complex ball $\bar{B}_3:=B(0,3)\subset \C^n$ of radius 3, sending $x_0$ to the origin in $\C^n$.
Let $h_t$ be a smooth local potential for $\theta_t$ in $B_2$, i.e. $dd^ch_t=\theta_t$ in $B_3$.

\medskip
We have
$$\max_{[0,T)\times z^{-1}(\bar{B}_2)} (\f_\lambda-\psi)=\f_\lambda(t_0,x_0) - {\psi}(t_0,x_0).$$
If $t_0=0$, we are done. Otherwise,  assume  $t_0\in (0,T)$, we now prove that $\f_\lambda\leq \psi$ in $[0,T]\times \Omega $. 

\medskip
Now $ u(t,z)=\f_\lambda\circ z^{-1}+h_t\circ z ^{-1}$ is upper semi-continuous in $[0,T)\times B_2$ and strictly psh in $B_2$ since $dd^c u_t\geq \lambda C\omega_X$.  It follows  from \eqref{subsol_phi_lambda}  that 
$$(\theta_t+dd^c \f_\lambda)^n\geq e^{\dt \f_\lambda + (1-\lambda )F(t,x,\f_\lambda)+n \lambda \log C+A +\frac{\delta}{(T-t)^2} } \mu$$
 in $(t_0-r,t_0 +r)\times B_2$.  Therefore 
$$ (dd^c u_t)^n\geq e^{\partial_t u+\tilde{F}(t,z, u_t)}\tilde{\mu},\, \text{in } B_2,$$
where  $\tilde{\mu}=z_*\mu\geq 0$ is a  continuous volume form and 
$$\tilde{ F}(t,z,s)=(1-\lambda)F(t,z, s-h(t,z))+ n\lambda \log C+\frac{\delta}{(T-t)^2}+ A\lambda-\partial_t h_t(z)  .$$

\medskip Similarly, we have
$v=\psi\circ z^{-1}+h_t\circ z^{-1}$ is lower semi-continuous in $[0,T)\times B_2$ and satisfies 
$$(dd^c  v)^n_+\leq e^{\partial_t \bar v+\tilde{G}(t,z, v(t,z))}\tilde{\mu}, \, \text{in } B_2,$$
where $$\tilde{G}(t,z,s):=F(t,z,s-h_t(z))-\partial_t h_t(z)  .$$

By our assumption we have
$$\max_{[0,T)\times \bar{B}_2} u - v=  u(t_0,0) - v(t_0,0).$$
 It follows from Lemma \ref{local comparison} that 
$$\tilde{F} (t_0,0,  u (t_0,0))  \leq  \tilde G(t_0,0, v (t_0,0)) ,$$
hence
 $$F(t_0,x_0,\f_\lambda) +\lambda(A-F(t_0,x_0,\f_\lambda) +n\log C)+\frac{\delta}{(T-t)^2} \leq  F(t_0,x_0, \psi(t_0,x_0)) .$$
Choosing $A=-n\log C+ \sup_{X_T} F(t,x,\f)$, we have 
$$F(t_0, \f_\lambda(t_0,x_0)) + \frac{\delta}{(T-t)^2}\leq F(t_0,x_0,\psi(t_0,x_0)).$$
This implies that $\f_\lambda(t_0, x_0)\leq \psi(t_0,x_0)$, hence $ \f_\lambda\leq {\psi}$ in $[0,T)\times X $. Letting $\lambda \rightarrow 0$ and $\delta\rightarrow 0$ we get ${\f}\leq  {\psi}$ in $[0,T)\times\Omega$, we thus conclude that $\f\leq \psi$ in $[0,T)\times\Omega$ as required.
\end{proof}

\begin{cor}\label{cor:comparison}
With the same assumption above, but replacing the condition $\theta_t\geq \theta$ by $\theta_t\geq g(t)\theta$  for some  smooth positive function $g:[0,T]\rightarrow \R$ with $g'>0$. Then 
 if $ \f(0,x)\leq \psi(0,x), \forall x\in X$, 
 we have
$$\f(t,x)\leq \psi(t,x), \forall (t,x)\in  \Omega_T.$$
\end{cor}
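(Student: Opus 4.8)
The plan is to rerun the proof of Theorem~\ref{comparison_strict_positive} essentially word for word, the only change being that the static perturbation $\lambda\rho$ is replaced by the time-dependent one $\lambda g(t)\rho$. First I would invoke Lemma~\ref{rho} for $\alpha=\{\theta\}$ to get a $\theta$-psh function $\rho$, smooth on $\Omega=\Amp(\alpha)$, with $\theta+dd^c\rho\ge C\omega$ and $\rho-V_\theta\to-\infty$ as $x\to\partial\Omega$; I record the elementary observation $\rho\le V_\theta\le 0$ (the second inequality because $\sup_X V_\theta\le 0$), which will be used crucially. Writing $m:=\min_{[0,T]}g>0$ and fixing $\lambda\in(0,1)$, $\delta>0$, I set
$$\f_\lambda(t,x):=(1-\lambda)\f(t,x)+\lambda g(t)\rho(x)-\frac{\delta}{T-t}-A\lambda t,$$
with $A$ to be chosen at the end.

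The one point that requires checking is that $\f_\lambda$ still satisfies the analogue of~\eqref{subsol_phi_lambda}. On the cohomological side,
$$\theta_t+dd^c\f_\lambda=(1-\lambda)(\theta_t+dd^c\f)+\lambda(\theta_t+g(t)dd^c\rho)\ \ge\ \lambda g(t)(\theta+dd^c\rho)\ \ge\ \lambda m C\,\omega,$$
using that $\f$ is $\theta_t$-psh and $\theta_t\ge g(t)\theta$; in particular $\f_\lambda$ is strictly $\theta_t$-psh, so the localization near a maximum point is licit. Running the $\log\det$-concavity argument of Theorem~\ref{comparison_strict_positive} with an upper test function $q$ for $\f_\lambda$ at a point $(t_0,z_0)$ with $\mu(z_0)\ne 0$, the sole new term produced by differentiating the perturbation in $t$ is $\dt\big(\lambda g(t)\rho\big)=\lambda g'(t)\rho$, and this is $\le 0$ because $g'>0$ and $\rho\le 0$; it can therefore simply be dropped, and one obtains, in the viscosity sense, the inequality~\eqref{subsol_phi_lambda} with the constant $C$ replaced by $mC$.

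After that the argument is identical to Theorem~\ref{comparison_strict_positive}. For the barrier behaviour, $u\in PSH(X,\theta)$ implies $g(t)u\in PSH(X,g(t)\theta)$, so $V_{\theta_t}\ge V_{g(t)\theta}\ge g(t)V_\theta$, whence $g(t)\rho-V_{\theta_t}\le g(t)(\rho-V_\theta)\le m(\rho-V_\theta)\to-\infty$ as $x\to\partial\Omega$; combined with $\f-V_{\theta_t}$ bounded above, $\psi-V_{\theta_t}$ bounded below, and $-\delta/(T-t)\to-\infty$ as $t\to T$, this forces the supremum of $(\f_\lambda-\psi)^*$ over $X_T$ to be attained at some $(t_0,x_0)\in[0,T)\times\Omega$. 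If $t_0=0$ the supremum is bounded by $\sup_{\{0\}\times X}(\f-\psi)^*$; otherwise I localize near $x_0$, apply the local comparison Lemma~\ref{local comparison}, and pick $A:=-n\log(mC)+\sup_{X_T}F(t,x,\f)$ to get $\f_\lambda(t_0,x_0)\le\psi(t_0,x_0)$. Letting $\lambda\to0$, $\delta\to0$ then gives $(\f-\psi)(t,x)\le\max\{\sup_{\{0\}\times X}(\f-\psi)^*,0\}$ on $\Omega_T$, i.e.\ the stated inequality (which reduces to $\f\le\psi$ when $\f_0\le\psi_0$). The one obstacle is the sign bookkeeping for the new term $\lambda g'(t)\rho$: the hypothesis $g'>0$, together with $\rho\le 0$, is exactly what makes it nonpositive and hence harmless---for a decreasing $g$ this term would be positive and unbounded near $\partial\Omega$, and the comparison would break down.
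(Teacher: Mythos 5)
Your argument is correct in substance, but it is a genuinely different route from the paper's. The paper does not revisit the proof of Theorem \ref{comparison_strict_positive} at all: it rescales the unknown, $\tilde\f_t=g(t)^{-1}\f_t$, replaces $\theta_t$ by $g(t)^{-1}\theta_t\ \ge\ \theta$, and reparametrizes time by $h'(t)=g(h(t))$, $h(0)=0$, so that the pair $(\f,\psi)$ is turned into a sub/supersolution pair of a flow of the same type with a modified nonlinearity $\tilde F(t,x,s)=F(t,x,s)+g'(t)s$ (up to further glossed terms); the hypothesis $g'>0$ is used there to keep $\tilde F$ nondecreasing in $s$, and Theorem \ref{comparison_strict_positive} is then applied as a black box to the transformed problem. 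You instead rerun the proof of Theorem \ref{comparison_strict_positive} with the time-dependent perturbation $\lambda g(t)\rho$, and $g'>0$ enters only through the sign of the extra term $\lambda g'(t)\rho\le 0$. Your route avoids the change of unknown and of time (and the attendant bookkeeping of how viscosity sub/supersolutions and the nonlinearity transform, which the paper treats rather briskly), isolates exactly where the monotonicity of $g$ is used, and gives directly the stronger max-type estimate including the initial term; the price is that the whole argument has to be rechecked. Your barrier estimate $g(t)\rho-V_{\theta_t}\le g(t)(\rho-V_\theta)\le m(\rho-V_\theta)$ and the choice $A=-n\log(mC)+\sup_{X_T}F(t,x,\f)$ are fine.

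One step does need repair. The derivation of \eqref{subsol_phi_lambda} ends by replacing $F(t_0,z_0,\tilde q(t_0,z_0))$ with $F(t_0,z_0,q(t_0,z_0))$, which uses $F$ nondecreasing together with $\f_\lambda\le\f$; in the original proof this follows from the normalization $\rho\le\f$ of Lemma \ref{rho}. With your perturbation the needed inequality is $g(t)\rho\le\f$, and since $\rho\le 0$ this follows from $\rho\le\f$ only when $g\ge 1$: if $m=\min g<1$ (which is precisely the case relevant to the K\"ahler--Ricci flow, where $\theta_t\ge(1-e^{-t})\theta$), one has $g(t)\rho-\rho=(g(t)-1)\rho\to+\infty$ near $\partial\Omega$, so $g(t)\rho\le\f$ is not automatic and your claim that one obtains \eqref{subsol_phi_lambda} verbatim (with $C$ replaced by $mC$) is not yet justified. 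The gap is easily closed under the same implicit normalization the paper uses (a subsolution with $\f\ge V_{\theta_t}-C$): replace $\rho$ by $\rho-C/m$, so that $g(t)(\rho-C/m)\le g(t)V_\theta-C\le V_{\theta_t}-C\le\f$, while the curvature bound, smoothness on $\Omega$ and the behaviour at $\partial\Omega$ are unchanged; alternatively one can keep $F(t,x,\f)$ in the exponent (viewing it as a nonlinearity in $\f_\lambda$ via the smooth correspondence on $\Omega_T$) and run the endgame with Lemma \ref{local comparison} accordingly. With that adjustment your proof is complete.
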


\begin{proof}
Denote $\omega_t(z)=g(t)^{-1}\theta_t$.
 Since $\f$ is a subsolution to $(CP)$, we have
$$(\omega_t+dd^c\tilde{\f}_t)^n\geq e^{g(t) \partial_t \tilde{\f}+ \tilde F(t,z,\tilde{\f})} \mu,$$
in the sense of viscosity, where $\tilde{\f} (t,x):= g(t)^{-1}\f(t,x)$ and $\tilde F(t,z,s) =F(t,z,s)+ g'(t)s$.

\medskip
 We change the time variable: $$\chi(t,x)=\omega (h(t),x) \quad \text{ and }\quad \phi(t,x):=\tilde{\f } (h(t),x)= \frac{1}{g(h(t))}\f(h(t),x),$$ where $h$ will be chosen hereafter. Then $$\partial_t \phi (t,x)=(\partial_t \tilde{\f}) (h(t),x)h'(t),$$
 hence
\begin{eqnarray}
(\chi(t,x)+dd^c \phi(t,x))^n\geq e^{ \frac{g(h(t))}{h'(t)} \partial _t \phi  +\tilde{F}(t,z, \phi) }\mu.
\end{eqnarray}
We choose $h$ such that $h'(t)=g(h(t))$ and $h(0)=0$, hence $\phi(t,x)$ is a subsolution of 

\begin{equation}\label{new eq}
(\chi_t+dd^c \phi)^n=e^{\partial_t\phi+\tilde{F}(t,x,\phi) }\mu.
\end{equation}

Similarly, we have $\hat{\psi}(t,x)=\frac{1}{g(h(t))}\psi(h(t),x)$ is a supersolution of (\ref{new eq}).  Since $\chi\geq \theta$, Theorem \ref{comparison_strict_positive} thus implies the desired inequality. 
\end{proof}

\subsection{Viscosity solutions to Cauchy problem for complex Monge-Amp\`ere flows}
Let $X$ be a $n$-dimensional compact K\"ahler manifold and $\alpha$ be a fixed big class on $X$. We have a general Cauchy problem on $X_T= [0,T)\times X$
$$(CP)\hspace*{1cm} \begin{cases}
 (\theta_t+dd^c\f_t)^n=e^{\dot\f_t+ F(t,x,\f_t) }\mu(t,z), \\ 
 \f(0,x)=\f_0(x)\quad  x\in X 
\end{cases},
$$
where  
\begin{itemize}
\item $F(t,x,r)$ is a continuous in $[0,T)\times X$ and non decreasing in $r$.
\item $\mu(t,x)\geq 0$ is a family of bounded continuous volume forms on $X$,
\item $\theta_t(x)$ is a family of  smooth  $(1,1)$-forms representing big cohomology classes $\alpha_t$ such that $\Amp(\alpha_t)\supset \Omega:= \Amp(\alpha)$.
\item $\varphi_0$ is a given $\theta_0$-psh function. 
\end{itemize}
\begin{defi}
A {\it subsolution} to $(CP)$ is a vicosity subsolution $u$ to the flow
$$(\theta_t+dd^c\f_t)^n=e^{\dot  \f_t +F(t,x, \f_t) }\mu,$$ 
on $X_T$,  satisfying that $u(0,x)\leq \f_0(x)$ for all $x\in X $.

\medskip
A {\it supersolution} to $(CP)$ is a vicosity supersolution $v$ to the flow
$$(\theta_t+dd^c\f_t)^n=e^{\dot \f_t+ F(t,x, \f_t) }\mu,$$
on $X_T$,
satisfying  $v(0,x)\geq \f_0(x)$ for all $x\in X$.
\end{defi}
 
 \begin{defi}
 A function $u$ on $X$ is a vicosity solution to the Cauchy problem $(CP)$ if it is both a subsolution and a supersolution for $(CP)$. 
 \end{defi}

\section{Cauchy problem in a big cohomology class}
\label{Cauchy_pro_sect}
Let $(X,\omega)$ be a K\"ahler manifold.  Fix $\alpha$ is a big cohomology class on $X$ and $\theta$ is a smooth $(1,1)$-form representing $\alpha$. 
 In this section we consider the following Cauchy problem 
$$ (CP_1)\hspace*{1cm} \begin{cases}
(\theta+dd^c\f_t)^n= e^{\dot{\f_t}+\f_t}\mu \quad \text{in }\, [0,T)\times X,\\
\f(0,x)=\f_0(x),\quad (0,x)\in \{0\}\times X,
\end{cases}
$$ 
where  $\mu=\mu(x)>0$ is a positive continuous volume form  and $\f_0$ is a given $\theta$-psh function on $X$ with minimal singularities which is continuous in $\Omega:=\Amp(\alpha)$. 
We first have the following lemma which  is useful to construct subsolutions. 
\begin{lem}\label{lem:ODE}
Let $f: \R^+\rightarrow \R$ be the smooth solution of the ODE
\begin{equation}
f'(t)+f(t) =C\log (1-Be^{-t}) \quad \text{with} \quad f(0)=0, 
\end{equation}
for some $B,C>0$. 
There exists $A>0$ such that for all $t\geq 0$, $-A(t+1)e^{-t} \leq f(t)\leq 0$.
 \end{lem}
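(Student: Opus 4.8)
The plan is to solve the linear first-order ODE explicitly by an integrating factor, then estimate the resulting integral. Writing $g(t) = C\log(1 - Be^{-t})$, the solution with $f(0) = 0$ is
\[
f(t) = e^{-t}\int_0^t e^s g(s)\, ds = C e^{-t}\int_0^t e^s \log(1 - Be^{-s})\, ds.
\]
Since $0 < B < 1$ is forced (otherwise $\log(1-Be^{-s})$ is not defined at $s=0$; note $1 - Be^{-0} = 1-B$ must be positive — I will note at the outset that the hypothesis implicitly requires $B \in (0,1)$, or alternatively one restricts to $t$ large enough, but the clean statement needs $B<1$), we have $\log(1 - Be^{-s}) \le 0$ for all $s \ge 0$, hence $f(t) \le 0$ immediately. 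This gives the upper bound with no work.

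For the lower bound, the key observation is that $-\log(1 - Be^{-s})$ decays like $Be^{-s}$ as $s \to \infty$: more precisely, using $-\log(1-u) \le \frac{u}{1-u}$ for $u \in [0,1)$ with $u = Be^{-s} \le B$, we get $0 \le -\log(1 - Be^{-s}) \le \frac{B}{1-B} e^{-s}$. Therefore
\[
0 \le -f(t) = C e^{-t}\int_0^t e^s \bigl(-\log(1 - Be^{-s})\bigr)\, ds \le \frac{CB}{1-B} e^{-t}\int_0^t e^s e^{-s}\, ds = \frac{CB}{1-B}\, t\, e^{-t}.
\]
Choosing $A := \frac{CB}{1-B}$ (or any larger constant, e.g. to also dominate on a compact initial interval if one prefers a cleaner bound) yields $-f(t) \le A\, t\, e^{-t} \le A(t+1)e^{-t}$, which is exactly the claimed inequality $-A(t+1)e^{-t} \le f(t) \le 0$.

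The argument is essentially routine once the explicit formula is in hand; the only genuine point requiring care is the elementary inequality $-\log(1-u) \le \frac{u}{1-u}$ (equivalently $\log(1-u) \ge \frac{-u}{1-u}$), which follows from comparing derivatives or from the series expansion, and the tacit restriction $B \in (0,1)$ needed for the ODE's right-hand side to be real-valued on all of $\R^+$. I do not expect any substantial obstacle; if one wanted the sharper shape $(t+1)e^{-t}$ rather than $t e^{-t}$ for bookkeeping reasons elsewhere in the paper, it costs nothing since $t \le t+1$.
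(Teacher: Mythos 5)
Your integrating-factor approach is the right one, and the upper bound $f\le 0$ together with the estimate $-\log(1-u)\le u/(1-u)$ is exactly the kind of elementary argument this lemma calls for. However, there is a genuine gap: you restrict to $B\in(0,1)$, claiming the hypothesis forces it, whereas the case the paper actually uses everywhere is $B=1$ (the ODE $f'(t)+f(t)=n\log(1-e^{-t})$ in Lemma \ref{subsolution 1}, Lemma \ref{subsolution 2}, Theorem \ref{convergence_1} and Theorem \ref{convergence of KRF}). For $B=1$ the right-hand side equals $-\infty$ at $t=0$, but the singularity is only logarithmic ($\log(1-e^{-s})\sim\log s$), hence integrable, so the solution $f(t)=Ce^{-t}\int_0^t e^s\log(1-Be^{-s})\,ds$ is still well defined, continuous at $0$ with $f(0)=0$ and smooth for $t>0$; the lemma is meant to cover this case, and your constant $CB/(1-B)$ blows up precisely there, so your estimate gives nothing when $B=1$.

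The repair is short and explains why the bound has the shape $(t+1)e^{-t}$ rather than $te^{-t}$: split the integral. On $[0,\min(t,1)]$ the function $s\mapsto e^s\bigl(-\log(1-Be^{-s})\bigr)$ is integrable (dominated near $0$ by a constant multiple of $-\log s$ plus a constant when $B\le 1$), so this piece contributes at most $C_1e^{-t}$. On $[1,t]$ you have $u=Be^{-s}\le Be^{-1}<1$, so your inequality $-\log(1-u)\le u/(1-u)$ applies with the harmless constant $1/(1-Be^{-1})$ and gives a contribution at most $C_2\,t\,e^{-t}$. Adding the two pieces yields
\begin{equation*}
0\le -f(t)\le C_1e^{-t}+C_2\,t\,e^{-t}\le A(t+1)e^{-t},
\end{equation*}
which is the claimed bound; the extra ``$+1$'' is exactly what absorbs the bounded contribution from the singular region near $s=0$. (For $B>1$ the right-hand side is not real for $t<\log B$, so the natural scope of the lemma is $0<B\le 1$; your argument as written covers only $B<1$.)
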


\subsection{Existence of viscosity sub/super-solutions}

\begin{lem}\label{subsolution 1}
$(CP_1)$ admits a viscosity subsolution.
\end{lem}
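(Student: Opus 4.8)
The plan is to construct an explicit viscosity subsolution of $(CP_1)$ by combining the pluripotential solution of the elliptic equation with a time-dependent correction governed by the ODE of Lemma \ref{lem:ODE}. First I would let $\f_\infty$ denote the unique $\theta$-psh solution with minimal singularities of $(\theta+dd^c\f_\infty)^n=e^{\f_\infty}\mu$ given by Theorem \ref{thm:BEGZ2}, which is continuous in $\Omega=\Amp(\alpha)$ by the viscosity theory recalled above. Since $\f_0$ and $\f_\infty$ both have minimal singularities and are continuous on $\Omega$, there is a constant $B\in(0,1)$ (after rescaling) and a constant $C>0$ such that $\f_0 \geq \f_\infty + C\log(1-B)$ on $X$; the point of the factor $(1-Be^{-t})$ in the ODE is precisely to interpolate from this lower bound at $t=0$ up to $0$ as $t\to\infty$.

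Next I would set $u(t,x) := \f_\infty(x) + f(t)$, where $f$ solves $f'(t)+f(t)=C\log(1-Be^{-t})$ with $f(0)=0$ as in Lemma \ref{lem:ODE}. I would check the three conditions in the definition of subsolution to $(CP)$ (equivalently, use Lemma \ref{pluri and vis subsolutions}, since $u_t=\f_\infty+f(t)$ is $\theta$-psh with minimal singularities, continuous on $\Omega$, and has continuous $\partial_t u = f'(t)$). For the differential inequality: on $\Omega$ one has in the pluripotential sense
\[
(\theta+dd^c u_t)^n = (\theta+dd^c\f_\infty)^n = e^{\f_\infty}\mu = e^{u_t - f(t)}\mu,
\]
so I must verify $e^{u_t-f(t)} \geq e^{\dot u_t + u_t}$, i.e. $-f(t) \geq f'(t) + f(t)$, i.e. $f'(t) + 2f(t) \leq 0$. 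Hmm — that is not quite what the ODE gives directly, so the right move is instead to compare against a density: since $\f_\infty$ has minimal singularities, $(\theta+dd^c\f_\infty)^n \geq c\,\mu$ on compacts of $\Omega$ is false in general, so I should rather exploit that $e^{\f_\infty}\geq e^{\f_\infty}(1-Be^{-t})^C \cdot (1-B)^{-C}$ is the wrong direction too. The clean fix is to define $u(t,x):=\f_\infty(x)+f(t)$ and use that $F(t,x,r)=r$ together with the ODE to absorb the defect: one checks $(\theta+dd^cu_t)^n = e^{\f_\infty}\mu = e^{u_t-f(t)}\mu$ and $\dot u_t + u_t = f'(t)+\f_\infty+f(t) = \f_\infty + C\log(1-Be^{-t}) \leq \f_\infty = u_t - f(t)$ since $C\log(1-Be^{-t})\le 0$; hence $(\theta+dd^cu_t)^n = e^{u_t-f(t)}\mu \geq e^{\dot u_t+u_t}\mu$, which is exactly the subsolution inequality. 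Finally $u(0,x)=\f_\infty(x)+f(0)=\f_\infty(x)$, and I would arrange $\f_\infty \leq \f_0$ by the normalization above (replacing $\f_\infty$ by $\f_\infty - \max_X(\f_\infty-\f_0)^+$ if needed, noting a constant shift only helps the inequality since $F$ is increasing), so that $u(0,\cdot)\leq \f_0$.

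The main obstacle is bookkeeping at the boundary $\partial\Omega$ and the singular locus: I must make sure that $u$ is a genuine \emph{global} $\theta$-psh minorant of $V_\theta$ up to a constant (so that the structural bound $u(t,x)\leq V_\theta(x)+C$ in the definition holds, as well as the fact that test functions only need to be checked on $\Omega_T$), and that the comparison argument of Lemma \ref{pluri and vis subsolutions} applies verbatim with the continuous density $e^{f(t)\cdot 0}\mu = \mu>0$ on $\Omega$. Since $\mu>0$ is continuous and $\f_\infty$ has minimal singularities, all of this is standard; the only genuinely new input is the elementary ODE estimate of Lemma \ref{lem:ODE}, which guarantees $f$ stays bounded (indeed $-A(t+1)e^{-t}\leq f(t)\leq 0$), so that $u$ does not drift to $-\infty$ in $t$ and remains a legitimate subsolution on all of $[0,\infty)\times X$.
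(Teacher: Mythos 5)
Your construction is correct, but it is genuinely different from the paper's. You build the subsolution out of the \emph{elliptic} solution $\f_\infty$ of $(\theta+dd^c\f_\infty)^n=e^{\f_\infty}\mu$ (Theorem \ref{thm:BEGZ2}, continuous on $\Amp(\alpha)$ by the viscosity theory), shifted down by a constant so that $u(0,\cdot)\le\f_0$; the verification is sound once you reach the final computation $\dot u_t+u_t=\f_\infty+C\log(1-Be^{-t})\le\f_\infty=u_t-f(t)$ (your intermediate ``$f'+2f\le 0$'' was a miscount that you then corrected), the downward shift is harmless because $F(t,x,r)=r$ is increasing, and $\sup_X(\f_\infty-\f_0)^+<\infty$ since both functions have minimal singularities. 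In fact your $f(t)$ is superfluous: $\f_\infty$ minus a suitable constant is already a stationary subsolution. The paper instead takes $u(t,x)=e^{-t}\f_0+(1-e^{-t})\rho-At+f(t)$, where $\rho$ is the Boucksom potential with $\theta+dd^c\rho\ge\epsilon\omega$ from Lemma \ref{rho} and $f'+f=n\log(1-e^{-t})$, and estimates $(\theta+dd^cu)^n\ge(1-e^{-t})^n(\theta+dd^c\rho)^n$. What the paper's route buys, and yours loses, is the equality $u(0,\cdot)=\f_0$ together with continuity of $u$ up to $t=0$ on $\Omega$: this is exactly what makes the same $u$ serve as a subbarrier in Proposition \ref{barrier} and what yields $\f_*(0,x_0)\ge\f_0(x_0)$ in the Perron argument of Theorem \ref{thm:envelope}. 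Your $u(0,\cdot)=\f_\infty-c$ is in general strictly below $\f_0$, so while it proves the lemma as literally stated, it could not replace the paper's subsolution in those later steps; conversely, your argument is shorter and needs no ODE beyond the sign of $f'+f$, at the price of invoking the BEGZ/EGZ elliptic existence and continuity result up front.
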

\begin{proof}
It flows from \cite{Bou04} that there exists a $\theta$-psh function $\rho$ with analytic singularities  satisfying
$$
\theta+dd^c\rho\geq \epsilon \omega  \quad \text{and }\sup_X\rho=0.
$$
 Set  
 $$u(t,x):=e^{-t}\f_0 +(1-e^{-t})\rho -At +f(t)$$ such that  the function $f$ satisfies the ODE: $f'(t)+f(t)= n\log (1-e^{-t})$ and  $f(0)=0$. 
Then  $u$ is continuous on $[0,T]\times  \Omega$ and    $u(0,x)=
 \f_0(x)$ and
\begin{eqnarray*}
(\theta+dd^cu)^n&=&\left( e^{-t}(\theta+dd^c\f_0)+(1-e^{-t})(\theta+dd^c\rho)\right)^n\\
&\geq& (1-e^{-t})^n(\theta+dd^c\rho)^n\\
&\geq &e^{n\log (1-e^{-t})} \epsilon^n \omega^n  \\
&=& e^{\partial_t u+u_t -\rho +A  +\log(\epsilon C) }\mu,\\
&\geq & e^{\partial_t u+u_t +A  +\log(\epsilon C) }\mu,
\end{eqnarray*} 
where $C^n\mu\leq \omega^n$. By choosing $A=-\log(\epsilon C)$, we infer that 
$ (\theta+dd^cu)^n\geq e^{\dot{u}_t +u_t}\mu$
in the pluripotential sense. 
Lemma \ref{pluri and vis  subsolutions} implies that $u$ is a viscosity subsolution of $(CP_1)$. 
\end{proof}

\begin{lem}\label{supersolution 1}
There exists a viscosity supersolution of $(CP_1)$.
\end{lem}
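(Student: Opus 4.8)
The plan is to construct an explicit supersolution by running the flow downward from a large constant and comparing against the stationary solution of the elliptic equation $(\theta+dd^c\varphi_\infty)^n=e^{\varphi_\infty}\mu$ provided by Theorem~\ref{thm:BEGZ2}. First I would recall that $\varphi_\infty$ is $\theta$-psh with minimal singularities, and (by the viscosity side of the elliptic theory, Theorem~\ref{CP} and its corollary) that $(\varphi_\infty)_*$ is a viscosity supersolution of $(MA_\mu)$ on $\Omega$; moreover $\varphi_\infty$ is continuous on $\Omega$. The idea is then to set
$$
v(t,x):=e^{-t}(\varphi_0^{\sup}) + (1-e^{-t})\varphi_\infty(x) + g(t),
$$
where $\varphi_0^{\sup}$ is a constant chosen so large that $\varphi_0 \le \varphi_0^{\sup}$ (this is possible since $\varphi_0$ has minimal singularities, hence $\varphi_0\le V_\theta + C \le C'$), and $g$ solves an ODE of the form $g'(t)+g(t)=-C\log(1+Be^{-t})$, $g(0)=0$, analogous to Lemma~\ref{lem:ODE} but with the opposite sign, so that $0\le g(t)\le A(t+1)e^{-t}$ for a suitable $A$.

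The key computation is that $v$ is a pluripotential supersolution: using concavity of $t\mapsto (1-e^{-t})\varphi_\infty$ in the Monge--Ampère sense (i.e.\ $\theta+dd^c v_t \le \theta + (1-e^{-t})dd^c\varphi_\infty$ up to the contribution of the constant term, which contributes nothing to $dd^c$), one gets
$$
(\theta+dd^c v_t)^n \le \big(\theta + (1-e^{-t})dd^c\varphi_\infty\big)^n \le C_0\,\mu
$$
on $\Omega$ for a uniform constant $C_0$ coming from the $L^\infty$ bound on $\theta$ and the density of $\mu$; here one uses that $\varphi_\infty$ solves the elliptic equation to control $(\theta+dd^c\varphi_\infty)^n=e^{\varphi_\infty}\mu$. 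Then one checks $\dot v_t + v_t = -e^{-t}\varphi_0^{\sup}+\varphi_\infty\cdot(\text{stuff}) + g'(t)+g(t) + \dots$ is bounded below by $\log C_0$ once $A$ and $B$ are chosen correctly, which gives $(\theta+dd^c v_t)^n \le e^{\dot v_t + v_t}\mu$ in the pluripotential sense on $\Omega_T$. Since $v_t$ is $\theta$-psh with minimal singularities, continuous on $\Omega$, and admits continuous $\partial_t v$, Lemma~\ref{pluri and vis supersolution} applies (with $w = \dot v_t + v_t$, which is continuous on $\Omega$ and bounded below as needed), showing $v$ is a viscosity supersolution of the flow. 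Finally $v(0,x)=\varphi_0^{\sup}\ge \varphi_0(x)$, so $v$ is a supersolution of the Cauchy problem $(CP_1)$.

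The main obstacle I anticipate is not the ODE bookkeeping but verifying that the Monge--Ampère inequality holds \emph{with continuous density} down to the boundary of $\Omega$ in a way compatible with Lemma~\ref{pluri and vis supersolution}: the density $e^{\varphi_\infty}\mu$ degenerates as one approaches $\partial\Omega$ (since $\varphi_\infty\to-\infty$ there along with $V_\theta$), so one must be careful that the comparison $(\theta+dd^c v_t)^n \le e^{w}\mu$ uses a function $w$ that is genuinely continuous on all of $\Omega$, which forces $w=\dot v_t+v_t$ rather than a cruder bound. The concavity inequality $\big((1-\lambda)\,T_1 + \lambda\,T_2\big)^n \ge (1-\lambda)^{?}\cdots$ must be applied in the correct direction for a \emph{super}solution (we want an upper bound on the mixed Monge--Ampère mass), so I would instead expand $(\theta+dd^c v_t)^n$ directly as a sum of mixed terms $\binom{n}{j}(1-e^{-t})^j\,\theta^{n-j}\wedge(\theta+dd^c\varphi_\infty)^j / \theta^{n-j}\cdots$ and dominate each term using $(\theta+dd^c\varphi_\infty)^j\wedge\theta^{n-j}\le e^{\varphi_\infty}\mu$-type estimates together with the mixed Monge--Ampère inequalities of \cite{BEGZ}; this is the step where I would spend the most care.
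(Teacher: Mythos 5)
There is a genuine gap at the step you yourself flagged as the delicate one, and it cannot be closed in the way you suggest. Writing $\theta+dd^c v_t=e^{-t}\theta+(1-e^{-t})(\theta+dd^c\varphi_\infty)$, the elliptic equation only controls the full top power $(\theta+dd^c\varphi_\infty)^n=e^{\varphi_\infty}\mu$; it gives no control whatsoever on the mixed terms $\theta^{n-j}\wedge(\theta+dd^c\varphi_\infty)^j$, since no bound on $dd^c\varphi_\infty$ (e.g.\ on its trace) is available -- individual eigenvalues of $\theta+dd^c\varphi_\infty$ can blow up inside $\Omega$ even though their product is $e^{\varphi_\infty}f$. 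The mixed Monge--Amp\`ere inequalities of \cite{BEGZ} (Dinew-type) go in the opposite direction: they are \emph{lower} bounds on mixed masses, so they cannot yield the upper bound $(\theta+(1-e^{-t})dd^c\varphi_\infty)^n\leq C_0\mu$ you need for a supersolution. Two further problems: since $\theta$ is only a smooth representative of a big class (not semipositive), $v_t$ need not be $\theta$-psh, so the hypothesis of Lemma \ref{pluri and vis supersolution} (``$v_t$ is $\theta_t$-psh with minimal singularities'') can fail and the non-pluripolar measure you estimate is not even defined as used; and even granting a bound by $C_0\mu$, your bookkeeping $\dot v_t+v_t=\varphi_\infty+g(t)+g'(t)$ is \emph{not} bounded below by $\log C_0$ on $\Omega$, because $\varphi_\infty\to-\infty$ at $\partial\Omega$. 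The interpolation ansatz is the right shape for a \emph{sub}solution (where concavity of $\log\det$ works in your favor, as in Theorem \ref{convergence_1}), but it is structurally wrong for a supersolution.

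The paper's construction avoids all of this: choose a K\"ahler form $\omega\geq\theta$, let $\phi$ be the continuous Calabi--Yau potential with $(\omega+dd^c\phi)^n=c\mu$, $\sup_X\phi=0$, and set $v=\phi+C$ (time-independent) with $C$ so large that $v\geq\varphi_0$ and $v\geq\log c$; then $(\theta+dd^c v)_+^n\leq(\omega+dd^c\phi)^n=c\mu\leq e^{\dot v+v}\mu$, and Lemma \ref{pluri and vis supersolution} applies since $\phi$ is bounded and continuous on all of $X$. If you insist on using $\varphi_\infty$ from Theorem \ref{thm:BEGZ2}, the correct barrier is the static one with a decaying constant, $v(t,x)=\varphi_\infty(x)+Ae^{-t}$ with $A\geq\sup_X(\varphi_0-\varphi_\infty)$ (finite since both have minimal singularities): then $\dot v+v=\varphi_\infty$ exactly, and $(\theta+dd^c v_t)^n=e^{\varphi_\infty}\mu=e^{\dot v+v}\mu$, with $w=\varphi_\infty$ continuous on $\Omega$ -- this is precisely the supersolution used later in the proof of Theorem \ref{convergence_1}, and it makes your ODE term $g$ and the constant $\varphi_0^{\sup}$ unnecessary.
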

\begin{proof}
We can assume that $\theta\leq \omega$ for some K\"ahler form $\omega$.  Let $\phi$ is the unique continuous $\omega$-psh function satisfying
$$(\omega+dd^c \phi )^n=c\mu,\quad \sup_X \phi =0.$$
where $$c=\frac{\int_X \omega^n}{\int_X \mu}>0.$$  
Set $v=\phi+C$ with $C>0$ such that  $\f_0\leq v$ and $  \log c\leq v $. Then we have 
\begin{eqnarray}
(\theta +dd^c v)^n\leq (\omega+dd^c\phi)^n= c\mu \leq e^{\partial_t v+v}\mu.
\end{eqnarray}
It follows from Lemma \ref{pluri and vis supersolution} that $v$
 is a supersolution to $(CP_1)$. 
 \end{proof}

\subsection{Barrier construction}

\begin{defi}
Fix $(0,x_0)\in \{0\}\times \Omega$ and $\e>0$.
\begin{itemize}
\item An upper semi-continuous function $u:\Omega_T\rightarrow \R$ is an {\sl $\e$-subbarrier} to the $(CP_1)$ at the boundary point $(0,x_0)$, if $u$ is subsolution to $(CP_1)$  in $[0,T)\times X$, and $u_*(0,x_0)\geq \f_0(x_0)-\e$.\\
When $\e=0 $, $u$ is called a subbarrier.
\medskip
\item A  lower semi-continuous $v:\Omega_T\rightarrow \R$ is an {\sl $\e$-supperbarrier} to the $(CP_1)$ at the boundary point $(0,x_0)$, if $v$ is a supersolution to the $(CP_1)$ in $[0,T)\times X$, and $v^*(0,x_0)\leq \f_0(x_0)+\e$.\\
When $\e=0 $, $v$ is called a superbarrier.
\end{itemize}
\end{defi}
\begin{prop}\label{barrier}
Fix $\e>0$, there exist an $\e$-subbarrier and an $\e$-superbarrier to the Cauchy problem $ (CP_1)$ in $[0,T)\times \Amp(\alpha)$. 
\end{prop}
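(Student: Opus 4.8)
The plan is to build the $\e$-subbarrier and the $\e$-superbarrier separately, in each case perturbing the global sub/supersolutions from Lemmas \ref{subsolution 1} and \ref{supersolution 1} so as to force the correct initial value at the single point $(0,x_0)$, while keeping the sub/supersolution property on all of $[0,T)\times X$.

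For the \emph{$\e$-superbarrier}, I would start from the continuous $\theta$-psh function $\f_0$ (which is continuous on $\Omega$) and choose, by upper semicontinuity of $\f_0$ near $x_0$, a small ball $B\ni x_0$ on which $\f_0(x)\le \f_0(x_0)+\e/2$. The idea is to take $v^{x_0}:=\min\{v,\ w^{x_0}\}$ where $v$ is the supersolution from Lemma \ref{supersolution 1} and $w^{x_0}(t,x):= \f_0(x_0)+\e + \Lambda t + q(x)$ for a suitable smooth strictly $\theta$-psh local model $q$ with a strict minimum at $x_0$ (so $q(x_0)=0$, $q>0$ away from $x_0$, $\theta+dd^cq\ge 0$) extended by $+\infty$ outside $B$, and $\Lambda$ a large constant. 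Taking $\Lambda$ large makes $w^{x_0}$ a classical (hence viscosity) supersolution of $(CP_1)$ on $[0,T)\times B$ — because the right-hand side $e^{\dot w + w}\mu$ grows like $e^{\Lambda t}$ while $(\theta+dd^cw)_+^n$ is bounded — and the minimum of two supersolutions is a supersolution. Since $w^{x_0}(0,x_0)=\f_0(x_0)+\e$ and the minimum is $\le w^{x_0}$, we get $(v^{x_0})^*(0,x_0)\le \f_0(x_0)+\e$, and $v^{x_0}\ge \f_0$ at $t=0$ because both $v$ and $w^{x_0}$ dominate $\f_0$ there (the latter by the choice of $B$ and $q\ge 0$); away from $B$, $v^{x_0}=v$. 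This is the superbarrier.

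For the \emph{$\e$-subbarrier}, the construction is dual: start from the subsolution $u$ of Lemma \ref{subsolution 1}, and use instead $u^{x_0}:=\max\{u,\ \tilde w^{x_0}\}$ where $\tilde w^{x_0}(t,x):= \f_0(x_0)-\e - \Lambda t - \tilde q(x)$ with $\tilde q$ smooth, $\tilde q(x_0)=0$, $\tilde q\ge 0$, supported in a ball $B$ where $\f_0(x)\ge \f_0(x_0)-\e/2$ (possible since $\f_0$ is continuous at $x_0\in\Omega$), and $\tilde w^{x_0}:=-\infty$ outside $B$. For $\Lambda$ large, $\tilde w^{x_0}$ is a smooth strictly $\theta$-psh function whose Monge-Ampère mass dominates $e^{\dot{\tilde w}+\tilde w}\mu$ on $B$ (again because $e^{-\Lambda t}$ kills the right-hand side), hence a subsolution there; the maximum of two subsolutions is a subsolution; and $(u^{x_0})_*(0,x_0)\ge \tilde w^{x_0}(0,x_0)=\f_0(x_0)-\e$, while $u^{x_0}\le \f_0$ at $t=0$ holds because $u(0,\cdot)=\f_0$ and $\tilde w^{x_0}(0,\cdot)\le \f_0$ on $B$ by construction.

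The main obstacle I anticipate is the bookkeeping needed to glue the local models $w^{x_0},\tilde w^{x_0}$ — defined only on a coordinate ball $B$ — to the global objects $v,u$ so that the $\min$/$\max$ really is globally a sub/supersolution: one must ensure that near $\partial B$ the local model is already dominated (for $\max$) or dominates (for $\min$) the global one, so that the gluing introduces no spurious test-function obstructions and the singularity/domination bounds ($\le V_\theta+C$, $\ge V_\theta-C$) are preserved. Choosing $q,\tilde q$ with a steep enough strict extremum at $x_0$ and $\Lambda$ large enough handles this, exactly as in the barrier constructions of \cite{EGZ15b,EGZ16,EGZ18}; I would carry it out by first fixing the ball and the model, then enlarging $\Lambda$, then checking the two boundary regimes ($x\to\partial B$ and $t\to 0$) separately.
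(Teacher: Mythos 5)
Your superbarrier construction is correct, and it takes a genuinely different route from the paper. The paper builds a \emph{global} superbarrier: it approximates $\f_0$ from above by envelopes $\f_j=P(h_j)$ of smooth functions $h_j\searrow\f_0$, uses that $(\omega+dd^c\f_j)^n=\mathbbm{1}_{\{\f_j=h_j\}}(\omega+dd^ch_j)^n$ has bounded density, checks that $\f_j+Bt$ is a supersolution once $B$ is large, and finally takes the infimum with the supersolution of Lemma \ref{supersolution 1}. You instead glue a local smooth model $w^{x_0}=\f_0(x_0)+\e+\Lambda t+q$ (say $q=K|z-x_0|^2$) to the global supersolution $v$ by a minimum; this works because $v$ is bounded, so choosing $q$ large near $\partial B$ forces $\min\{v,w^{x_0}\}=v$ there, the minimum of supersolutions is a supersolution, and the factor $e^{\dot\f_t}$ absorbs everything once $\Lambda$ is large. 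Note that your requirement $\theta+dd^cq\ge 0$ is not needed for a supersolution (the $(\cdot)_+$ convention handles indefinite forms), though it is harmless and achievable by enlarging $K$. Your route trades the envelope regularity input for gluing bookkeeping; both are legitimate.

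The subbarrier half, however, contains a genuine gap as written, and is also unnecessary. A smooth function can only be a viscosity subsolution of the flow (with $\mu>0$) if it is $\theta$-psh in the space variable; for $\tilde w^{x_0}=\f_0(x_0)-\e-\Lambda t-\tilde q$ this means $\theta-dd^c\tilde q\ge 0$ near $x_0$. Since you impose $\tilde q\ge 0$ with $\tilde q(x_0)=0$, you get $dd^c\tilde q(x_0)\ge 0$ and hence $\theta(x_0)\ge dd^c\tilde q(x_0)\ge 0$, which fails whenever the chosen representative $\theta$ is not semipositive at $x_0$ --- and for a merely big class this happens at points of $\Amp(\alpha)$ in general, since no semipositive smooth representative need exist. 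Enlarging $\Lambda$ only shrinks the right-hand side and does not restore the positivity constraint; likewise, making $\tilde q$ ``steep'' near $\partial B$ while keeping $-\tilde q$ $\theta$-psh is capped by the oscillation of a local potential of $\theta$ (maximum principle), so the steepness strategy for the $\max$-gluing does not run. One can repair the bump (take $\tilde q$ quadratic with $dd^c\tilde q\le\theta-\delta\omega$ on a small ball, allowing $\tilde q$ to dip to $-\e/2$ and using the $\e$-slack at $t=0$), but none of this is needed: the subsolution $u$ of Lemma \ref{subsolution 1} already satisfies $u(0,\cdot)=\f_0$ and is continuous on $[0,T)\times\Amp(\alpha)$, hence $u_*(0,x_0)=\f_0(x_0)$ and $u$ is an exact subbarrier at every point of $\{0\}\times\Amp(\alpha)$. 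This is precisely the paper's observation, and it renders the $\max$-construction superfluous (and, in the form you state it, unjustified).
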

\begin{proof}
It  is straightforward  that the subsolution $u$ constructed in Lemma \ref{subsolution 1} is a subbarrier, so $\e$-subbarrier for all $\e>0$. 

\medskip
We now  find an $\e$-supperbarrier. Assume that $\theta\leq \omega$ for some K\"ahler form $\omega$ on $X$,  then $\f_0$ is also a $\omega$-psh function.  Suppose $h_j$ be a sequence of smooth function decrease to $\f_0$.  Denote  $\f_j=P(h_j):=\sup_X \{ \phi\in PSH(X,\omega)| \, \phi\leq h_j \}$ the envelope of $h_j$ then  $\f_j\searrow \f$ and $$(\omega+dd^c\f_j)^n=\mathbbm{1}_{\{\f_j=h_j\}} (\omega+dd^c h_j)^n=\mathbbm{1}_{\{\f_j=h_j\}} f_j\mu,$$
where $f_j\geq 0$ is bounded. Thus for any $\e$ sufficiently small and any $x_0\in \Amp(\alpha)$, there exists a function $\f_j$ such that 
$$ \f_0(x_0)\leq \f_j(x_0) \leq\f_0(x_0)+\e.$$
Define
$$v_j(t,x):=\f_j+Bt,$$ 
for a positive constant $B$ will be chosen hereafter,  then
\begin{align*}
(\theta+dd^c v_j)^n &=(\theta+dd^c\f_j)^n \\
&=\mathbbm{1}_{\{\f_j=h_j\}}(\theta+dd^ch_j)^n\\
&= \mathbbm{1}_{\{\f_j=h_j\}} f_j \mu
\end{align*}
where $f_j\geq 0$ is bounded. 

\medskip  
Since $h_j$ is smooth on $X$, there exists a constant $C_j>0$ such that 
$$\mathbbm{1}_{\{\f_j=h_j\}}f_je^{-\f_j}= \mathbbm{1}_{\{\f_j=h_j\}}f_je^{-h_j}\leq C_j.$$
We now choose $B>0$ such that $e^{B}\geq C_j$, hence 
\begin{eqnarray*}
(\theta+dd^cv_j)^n&=&\mathbbm{1}_{\{\f_j=h_j\}}f_j\mu\\
&=&\mathbbm{1}_{\{\f_j=h_j\}}f_je^{-\f_j}e^{\f_j}\mu\\
&\leq& C_je^{-B} e^{\dot v_j+v_j}\mu\\
&\leq& e^{\dot v_j+v_j}\mu.
\end{eqnarray*}
It follows from Lemma \ref{pluri and vis supersolution} that $v_j(t,x)$ is an $\e$-supperbarrier to $(CP_1)$ at  $(0,x_0)\in \{0\}\times \Amp(\alpha)$, so is 
$$v_{\e}:=\inf\{v_j(t,x),v+A_2t \},$$
where $v$ is the supersolution to $(CP_1)$ in Lemma \ref{supersolution 1}.
\end{proof}
\subsection{The Perron envelope}
Consider the upper envelope
$$\f:=\sup\{w, w \text{ is a subsolution of }(CP_1),u\leq w\leq v \},$$
where $u$ and $v$ are the viscosity  sub/super-solution from Lemma \ref{subsolution 1} and \ref{supersolution 1}.

\begin{thm}\label{thm:envelope}
The upper envelope $\f$ is the unique viscosity solution to $(CP_1)$ in $[0,T)\times \Omega$. 
\end{thm}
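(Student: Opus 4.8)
The plan is to run the standard Perron method, split into three parts: (i) the envelope $\f$ is well-defined and satisfies $u \le \f \le v$; (ii) $\f$ is a viscosity subsolution; (iii) $\f_*$ (the lower semicontinuous regularization) is a viscosity supersolution; then the comparison principle (Theorem \ref{comparison_strict_positive}) forces $\f^* \le \f_*$ on $[0,T)\times\Omega$, which together with $\f_* \le \f \le \f^*$ gives that $\f$ is continuous and a genuine solution; uniqueness is again immediate from comparison.

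**First** I would check the family over which we take the supremum is nonempty (it contains $u$ from Lemma \ref{subsolution 1}) and that every competitor $w$ satisfies $w \le v$, so $\f$ is finite and sits between $u$ and $v$; in particular $\f^*(0,x) = \f_0(x) = \f_*(0,x)$ on $\Omega$ because $u$ and $v$ already pinch the initial data (using the $\e$-barriers from Proposition \ref{barrier} to get the initial condition at each $(0,x_0)$, letting $\e \to 0$). **Next**, the fact that $\f$ is a subsolution is the usual Perron step: the supremum of subsolutions, regularized from above, is a subsolution — one uses that if $q$ is an upper test function for $\f^*$ at $(t_0,x_0)\in\Omega_T$, then perturbing $q$ slightly and using upper semicontinuity one finds a competitor $w$ and a point where $w$ touches $q$ from below, so the subsolution inequality for $w$ passes to $q$; the concavity/monotonicity structure of the operator makes this routine, and one must take care that the test point can be taken in $\Omega_T$ where everything is well-behaved. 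I would cite the analogous arguments in \cite{EGZ15b,EGZ16,EGZ18} rather than reproduce them.

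**The main obstacle** is step (iii): showing $\f_*$ is a supersolution. The standard "bump" argument goes by contradiction — if $\f_*$ fails the supersolution inequality at some $(t_0,x_0)\in\Omega_T$, one constructs a smooth strict subsolution slightly above $\f_*$ near $(t_0,x_0)$, glues it to $\f$ to produce a larger subsolution in the admissible family, contradicting maximality of $\f$. The delicate points here are: (a) ensuring the glued function still lies below $v$ and above $u$ (so it is a legitimate competitor), which requires the local bump to be small and supported away from $\{t=0\}$ and away from $\partial\Omega$; (b) handling the degeneracy $\mu \ge 0$ and the $\theta_+$ convention in the supersolution definition — but here $\mu > 0$ is strictly positive (this is the hypothesis of $(CP_1)$), which is exactly why the local comparison Lemma \ref{local comparison} and the bump construction work cleanly; (c) the constraint $\f_* \ge V_\theta - C$ must be preserved, which follows since $u \ge V_\theta - C'$ already.

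**Finally**, once $\f^*$ is a subsolution and $\f_*$ a supersolution, both with the same initial trace $\f_0$ on $\Omega$, Theorem \ref{comparison_strict_positive} (applied with $\theta_t \equiv \theta$, so the hypothesis $\theta_t \ge \theta$ is trivially met, and $\sup_{\{0\}\times X}(\f^*-\f_*)^* \le 0$) yields $\f^* \le \f_*$ on $[0,T)\times\Omega$; since trivially $\f_* \le \f \le \f^*$, all three coincide, $\f$ is continuous on $\Omega_T$, and it is a viscosity solution. If $\psi$ were another solution, comparison in both directions gives $\f = \psi$ on $\Omega_T$, proving uniqueness. I would also note $\f$ extends to a $\theta_t$-psh function with minimal singularities on all of $X_T$ because of the two-sided bound by $V_\theta$.
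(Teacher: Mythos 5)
Your proposal is correct and follows essentially the same route as the paper: the Perron envelope pinched between the sub/supersolutions of Lemmas \ref{subsolution 1} and \ref{supersolution 1}, the $\e$-barriers of Proposition \ref{barrier} (with $\e\to 0$) to enforce the initial condition, and the comparison principle to conclude $\f^*=\f_*=\f$ and uniqueness. The paper likewise treats the standard Perron steps (that $\f^*$ is a subsolution and $\f_*$ a supersolution of the flow) as known rather than reproving the bump construction, so your deferral of those steps to the cited references matches its level of detail.
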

\begin{proof} Let $\f^*$ (resp. $\f_*$) be the upper (resp. lower) semi-continuous envelope for $\f$ in $[0,T)\times \Omega$, and  set $\f^*(t,x)=\f_*(t,x)=\f(t,x)$  in $[0,T)\times (X\setminus \Omega)$.
 Obeserve that $\f^*$ (resp. $\f_*$) is a subsolution (resp. supersolution) to the complex Monge-Amp\`ere flow 
$$(\theta+dd^c \phi)^n=e^{\partial_t \phi+\phi}\mu$$
on $(0,T)\times \Omega$.  We now show that they are also subsolution and supersolution respectively  to the Cauchy problem $(CP_1)$.

\medskip
We first have $\f\geq u$ on $\Omega_T$. Since $u$ is continuous in $[0,T)\times \Omega$, $\f_*(0,x_0)\geq \f_0(x_0)$ for any $x_0\in \Amp(\alpha)$. This shows that $\f_*$ is a supersolution to the Cauchy problem $(CP_1)$.
\medskip

We prove now that $\f^*(0,.)\leq \f_0$ in $\Omega$.  Fix $\e>0$, by Proposition \ref{barrier} there exists an $\e$-supperbarrier $v_\e$ to $(CP_1)$ at any point $(0,x_0)$ with $x_0\in \Omega$. It follows from the comparison principle (Theorem \ref{comparison_strict_positive})  
that
$$\f(t,x)\leq v_\e(t,x)\quad \text{in }[0,T)\times\Omega,$$  
hence
$$\f^*(0,x_0)\leq v_\e^*(0,x_0)\leq\f_0(x_0)+\e$$
for all $x_0\in \Omega$, hence $\f^*$ is a viscosity subsolution to the Cauchy problem $(CP_1)$.

\medskip
The comparison principle (Theorem \ref{comparison_strict_positive}) therefore implies that $\f^*=\f_*=\f$ in $[0,T)\times \Omega$.

\medskip
Finally, the uniqueness of viscosity solution in $[0,T)\times \Omega$ is deduced by the comparison principle (Theorem \ref{comparison_strict_positive}).
\end{proof}
\subsection{Long term behavior of the flow}
It follows from \cite{BEGZ} and \cite{EGZ15} that the Monge-Amp\`ere equation 
\begin{equation}\label{DMAE}
(\theta+dd^c\phi)^n=e^{\phi}\mu
\end{equation}
have a unique pluripotential solution which is a viscosity solution in $\Amp(\alpha)$. In this section,  we will prove that the solution of the Monge-Amp\`ere flow 
\begin{equation}\label{DMAF}
(\theta+dd^c\f_t)^n=e^{\dot{\f_t}+\f_t}\mu
\end{equation}
converges to the solution of (\ref{DMAE}) as $t\rightarrow +\infty$. 
\begin{thm}\label{convergence_1}
The solution $\f_t$ of the complex Monge-Amp\`ere flow  \ref{DMAF} starting at $\f_0$ converges,  exponentially fast in $\Amp(\alpha)$, as $t\rightarrow +\infty$, to the solution of the degenerate elliptic Monge-Amp\`ere equation (\ref{DMAE}).
\end{thm}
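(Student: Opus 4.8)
The plan is to run a standard barrier argument combining the parabolic comparison principle (Theorem \ref{CP}) with carefully chosen sub/super-solutions built from the elliptic solution $\phi_\infty$ of \eqref{DMAE}. First I would recall that $\phi_\infty$ is a $\theta$-psh function with minimal singularities, continuous in $\Omega=\Amp(\alpha)$, which is both a viscosity sub- and super-solution of \eqref{DMAE} in $\Omega$. Since the static equation has no time derivative, $\phi_\infty$ is automatically a \emph{stationary} solution of the flow \eqref{DMAF}: indeed $\dot\phi_\infty\equiv 0$, so $(\theta+dd^c\phi_\infty)^n=e^{\phi_\infty}\mu=e^{\dot\phi_\infty+\phi_\infty}\mu$ in the viscosity sense on $\Omega_T$ (using Lemmas \ref{pluri and vis subsolutions} and \ref{pluri and vis supersolution} with $F(t,x,r)=r$, $w=\phi_\infty$). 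The idea is then to sandwich the flow $\phi_t$ between $\phi_\infty \pm c(t)$ for a suitable function $c(t)\to 0$.

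The key step is the construction of the comparison functions. I would set $c(t)=Me^{-t}$ for a constant $M$ chosen so large that $|\phi_0-\phi_\infty|\le M$ on $\Omega$; this is possible because both $\phi_0$ and $\phi_\infty$ have the same (minimal) singularities, hence their difference is bounded. Define the competitors
$$\psi^{\pm}(t,x):=\phi_\infty(x)\pm Me^{-t}.$$
A direct check shows $\psi^+$ is a supersolution of the flow: since adding a constant in space does not change $(\theta+dd^c\psi^+)^n=(\theta+dd^c\phi_\infty)^n$, and $\dot\psi^+=-Me^{-t}<0$ while $\psi^+>\phi_\infty$ (so $e^{\dot\psi^++\psi^+}\mu = e^{-Me^{-t}+Me^{-t}}e^{\phi_\infty}\mu=e^{\phi_\infty}\mu$, wait — more carefully: $e^{\dot\psi^++\psi^+}=e^{-Me^{-t}+\phi_\infty+Me^{-t}}=e^{\phi_\infty}$), we get equality $(\theta+dd^c\psi^+)^n=e^{\dot\psi^++\psi^+}\mu$, so $\psi^+$ is in fact a solution of the flow with $\psi^+(0,\cdot)=\phi_\infty+M\ge\phi_0$. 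Symmetrically $\psi^-=\phi_\infty-Me^{-t}$ is a solution with $\psi^-(0,\cdot)\le\phi_0$. Then I apply the comparison principle (Theorem \ref{CP}, or rather its parabolic analogue proved here as Theorem \ref{comparison_strict_positive} with $\theta_t\equiv\theta$) twice: comparing the subsolution $\psi^-$ against the solution $\phi_t$ gives $\psi^-\le\phi_t$ on $\Omega_T$, and comparing $\phi_t$ against the supersolution $\psi^+$ gives $\phi_t\le\psi^+$. Since $T$ is arbitrary, this yields
$$|\phi_t(x)-\phi_\infty(x)|\le Me^{-t}\qquad\text{for all }(t,x)\in[0,\infty)\times\Omega,$$
which is exactly the claimed exponential convergence in $\Amp(\alpha)$.

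The main obstacle is a bookkeeping one rather than a conceptual one: one must verify that $\psi^{\pm}$ genuinely satisfy the sub/super-solution \emph{boundary} condition $\psi^{\pm}(t,x)\ge V_\theta(x)-C$ (resp. $\le V_\theta(x)+C$) on all of $X_T$, not just on $\Omega_T$, so that the comparison principle applies — but this is immediate since $\phi_\infty$ has minimal singularities, hence $\phi_\infty = V_\theta + O(1)$, and adding the bounded term $\pm Me^{-t}$ preserves this. A second minor point: the comparison principle as stated gives the conclusion only on $\Amp(\alpha)$, but that is precisely where the convergence is asserted, so no extension beyond $\Omega$ is needed. Finally, one should note that the solution $\phi_t$ supplied by Theorem \ref{thm:envelope} is continuous on $[0,T)\times\Omega$ and has the correct singularity type, so it is a legitimate input for Theorem \ref{CP}; the uniqueness in that theorem also guarantees the limit is the unique pluripotential solution of \eqref{DMAE}.
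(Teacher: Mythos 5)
Your argument is correct and rests on the same mechanism as the paper's proof (barriers built from the elliptic solution plus the parabolic comparison principle, Theorem \ref{comparison_strict_positive} applied with $\theta_t\equiv\theta$ on each $[0,T)$), but the lower barrier is genuinely different and in fact simpler. The paper's upper barrier is exactly yours, $\phi+Ae^{-t}$ with $A\geq|\f_0-\phi|$; for the lower bound, however, the paper interpolates, taking $u(t,x)=e^{-t}\f_0+(1-e^{-t})\phi+f(t)$ where $f$ solves $f'+f=n\log(1-e^{-t})$, $f(0)=0$ (Lemma \ref{lem:ODE}), the convexity of the interpolation and the concavity of the Monge--Amp\`ere operator producing the factor $(1-e^{-t})^n$ that $f$ absorbs; this gives the two-sided bound $e^{-t}(\f_0-\phi)+f(t)\leq \f_t-\phi\leq Ae^{-t}$, i.e.\ a rate $O(te^{-t})$, with the extra feature that the subbarrier starts exactly at $\f_0$. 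Your choice $\psi^-=\phi-Me^{-t}$ is instead an exact translate solution of the flow (since $\dot\psi^-+\psi^-=\phi$ and $dd^c\psi^-_t=dd^c\phi$), made admissible via Lemma \ref{pluri and vis  subsolutions} because $\phi$ has minimal singularities and is continuous on $\Amp(\alpha)$; it only needs $\psi^-(0,\cdot)\leq\f_0$, avoids the ODE lemma entirely, and yields the cleaner symmetric estimate $|\f_t-\phi|\leq Me^{-t}$, slightly sharper than the paper's $O(te^{-t})$. The bookkeeping points you flag (minimal singularities so that $\psi^\pm=V_\theta+O(1)$, the sup in the comparison principle being taken over $\{0\}\times X$, and applying the comparison on each finite horizon $[0,T)$ with constants independent of $T$) are exactly the ones the paper also relies on, so the proposal stands as a valid, marginally stronger variant of the published argument.
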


\begin{proof}
Let $\phi$ be the unique pluripotential solution to the equation (\ref{DMAE}).  Set  
 $$u(t,x):=e^{-t}\f_0 +(1-e^{-t})\phi +f(t),$$
 where $f(t)=O(te^{-t})$ is the unique solution of the ODE $f'(t)+f(t)=n\log(1-e^{-t})$ and $f(0)=0$. We now have $u(0,x)=
 \f_0(x)$ and
\begin{eqnarray*}
(\theta+dd^cu)^n&=&\big(e^{-t}(\theta+dd^c\f_0)+(1-e^{-t})(\theta+dd^c\phi)\big)^n\\
&\geq& (1-e^{-t})(\theta+dd^c\phi)^n\\
&=&e^{\partial_tu+u_t}\mu.
\end{eqnarray*} 
Lemma \ref{pluri and vis  subsolutions} implies that $u$ is a subsolution of $(CP_1)$ in $[0,T)\times \Amp(\alpha)$. It follows from the comparison principle (Theorem \ref{comparison_strict_positive}) we get $u\leq \f$ in $[0,T)\times \Amp(\alpha)$. Therefore
$$\f_t-\phi\geq e^{-t}(\f_0-\phi)+f(t) \quad \text{ in } [0,T]\times \Amp(\alpha).$$
 \medskip
 In addition, we also have $$v(t,x)=Ae^{-t}+\phi,$$
 where $A>0$ satisfies $|\f_0-\phi|\leq A$, is a supersolution of $(CP_1)$ in $[0,T)\times \Amp(\alpha)$. By the comparison principle (Theorem \ref{comparison_strict_positive}), we obtain $v(t,x)\geq \f_t$ in $[0,T)\times\Amp(\alpha)$, hence
 $$\f_t-\phi\leq Ae^{-t}$$
 in $[0,T)\times\Amp(\alpha)$.
 
 \medskip
 All together yields $|\f_t-\phi|=O(te^{-t})$ in $[0,T]\times \Amp(\alpha)$.
 Letting $t\rightarrow +\infty$ we obtain $\f_t\rightarrow \phi$ in $\Omega$.  
\end{proof}

\section{The K\"ahler-Ricci flow on manifolds of general tpye}\label{KFR_sect}
Let $X$ be a K\"ahler manifold  with the canonical bundle $K_X$ is big but not nef. Fix $\alpha_0$ a K\"ahler class on $X$ and $\omega_0$ is a K\"ahler form representing $\alpha_0$.  A (classical) solution of the {\sl normalized K\"ahler-Ricci flow} on $X$ starting at $\hat \omega\in \alpha_0$ is a family of K\"ahler forms  $(\omega_t)$ solving
\begin{equation} \label{NKRF}
\frac{\partial \omega_t}{\partial t}=-Ric(\omega_t)-\omega_t,\quad {\omega_t}_{|_{t=0}}=\hat \omega.
\end{equation}

Note that $\omega_t\in \alpha_t:= e^{-t}\alpha_0+(1-e^{-t})c_1(K_X)$. Moreover it follow from  \cite{TZ06} (see also \cite{Cao85,Tsu88} for special cases) that  
the normalized K\"ahler-Ricci flow with initial metric $\omega_0$ has a unique smooth solution on on $[0,T)$ with 
$$T:=\sup \{t>0|\;  e^{-t}\alpha_0+(1-e^{-t})c_1(K_X)>0 \}.$$
Assuming that $\alpha_T$ is big, Tosatti and Collins  \cite{CT} proved  that as $t\rightarrow T^-$ the metric $\omega(t)$ develop singularities precisely on the Zariski closed set $ X\setminus \Amp(K_X)$.

\medskip
Now at $T$, $\alpha_{T}$ is big and nef. However, for $t>T$, $\alpha_t$ is still big but no longer nef, thus we can not continue the flow in the classical sense.  In  \cite{FIK} the authors asked  whether one can  define and construct weak solutions of K\"ahler-Ricci flow after  the maximal existence time for smooth solutions.
 In \cite{BT12}, Boucksom and Tsuji have tried to run the   normalized K\"ahler-Ricci on projective varieties  in a weak sense  beyond the maximal time using the  discretization of the K\"ahler-Ricci flow  and algebraic geometry tools. They have proposed a conjecture  (cf. \cite[Conjecture 1, page 208]{BT12}) with respect to this direction. 

\medskip
In this section  we  answer the question of  a Feldman-Ilmanen-Knopf and give an analytic approach to the conjecture of Boucksom and Tsuji using  the viscosity theory established in Section \ref{sec:comparison_principle}.   Moreover we  show that the weak flow exists for all time and  converges to the singular K\"ahler-Einstein metric contructed in \cite{BEGZ}.  

\subsection{Existence and uniqueness of extended flow}
Now let $\theta$ be a smooth closed $(1,1)$-form representing $c_1(K_X)$ and set
$$\theta_t:=e^{-t}\omega_0+(1-e^{-t})\theta.$$
Let $dV$ be a smooth volume form on  $X$, then $-{\rm Ric}(dV)\in c_1(K_X)$. Therefore there exists $f\in C^\infty (X)$ such that $\theta= {\rm Ric}(\mu)$ with $\mu=e^f dV $.  Then  the normalized K\"ahler-Ricci flow (\ref{NKRF}) can be written as the complex Monge-Amp\`ere flow
$$(CP_2)\quad  \begin{cases}
(\theta_t+dd^c\f_t)^n=e^{\dot{\f_t}+\f_t} \mu\\
\f(0,x)=\f_0
\end{cases},
$$
where $\hat \omega=\omega_0+dd^c\f_0$. 
\begin{lem}\label{subsolution 2}
For any $T>0$, there exists a subsolution and a supersolution to the Cauchy problem $(CP_2) $ in $[0,T)\times \Amp(K_X)$.

\end{lem}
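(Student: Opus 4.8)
The plan is to construct explicit sub/supersolutions for $(CP_2)$ by the same recipe as in the proof of Lemma \ref{subsolution 1} and Lemma \ref{supersolution 1}, but now being careful that the class $\alpha_t$ changes with $t$ and is no longer semipositive. The main new point compared with $(CP_1)$ is that the reference form $\theta_t = e^{-t}\omega_0+(1-e^{-t})\theta$ varies, but it satisfies the key positivity $\theta_t \geq e^{-t}\omega_0 \geq 0$, and in fact $\theta_t$ dominates a fixed strictly positive quantity away from $\partial\Amp(K_X)$.

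\emph{Construction of a subsolution.} First I would pick, using \cite{Bou04} (as in Lemma \ref{rho} and the proof of Lemma \ref{subsolution 1}), a $\theta$-psh function $\rho$ with analytic singularities such that $\theta+dd^c\rho \geq \eps\omega$ for some $\eps>0$, $\sup_X\rho = 0$, and $\Amp(K_X) = \{\rho=-\infty\}^c$. Since $\f_0$ is $\omega_0$-psh and $\omega_0\geq 0$, the combination $e^{-t}\f_0 + (1-e^{-t})\rho$ is $\theta_t$-psh. Then I would set
$$u(t,x) := e^{-t}\f_0(x) + (1-e^{-t})\rho(x) - At + f(t),$$
where $f$ solves the ODE $f'(t)+f(t) = n\log(1-e^{-t})$ with $f(0)=0$ (so $f(t) = O(te^{-t})$ by Lemma \ref{lem:ODE}), and $A>0$ is a constant to be fixed. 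Exactly as in Lemma \ref{subsolution 1}, convexity of the Monge–Amp\`ere expression along the convex combination gives
$$(\theta_t + dd^c u)^n \geq (1-e^{-t})^n (\theta+dd^c\rho)^n \geq (1-e^{-t})^n \eps^n\omega^n \geq C^n(1-e^{-t})^n\mu$$
for a constant $C>0$ with $C^n\mu \leq \omega^n$; rewriting the right-hand side as $e^{\dot u_t + u_t - \rho + A + \log(\eps C)}\mu \geq e^{\dot u_t + u_t + A + \log(\eps C)}\mu$ (using $\rho\leq 0$) and choosing $A = -\log(\eps C)$ makes $u$ a pluripotential subsolution, hence by Lemma \ref{pluri and vis subsolutions} a viscosity subsolution of $(CP_2)$ on $[0,T)\times\Amp(K_X)$, with $u(0,x)=\f_0(x)$. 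Here one uses that $u$ is continuous on $[0,T]\times\Amp(K_X)$ and that $\mu = e^f dV$ is a smooth positive volume form, so the density on the right is continuous and positive in the ample locus.

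\emph{Construction of a supersolution.} For the supersolution I would mimic Lemma \ref{supersolution 1}: fix a K\"ahler form $\omega$ with $\theta_t \leq \omega$ for all $t\in[0,T]$ (possible since $[0,T]$ is compact and $\theta_t$ is a continuous family), let $\phi$ be the unique continuous $\omega$-psh solution of $(\omega+dd^c\phi)^n = c\,\mu$ with $\sup_X\phi = 0$, where $c = \int_X\omega^n / \int_X\mu > 0$, and set $v = \phi + C$ with $C$ large enough that $\f_0 \leq v$ and $\log c \leq v$. Then $(\theta_t+dd^c v)^n \leq (\omega+dd^c\phi)^n = c\,\mu \leq e^{v}\mu = e^{\dot v_t + v_t}\mu$, so Lemma \ref{pluri and vis supersolution} gives that $v$ is a viscosity supersolution of $(CP_2)$ on $[0,T)\times\Amp(K_X)$ with $v(0,x)\geq\f_0(x)$.

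\emph{Main obstacle.} The delicate point is not the algebra but checking that the various objects have the regularity required to invoke Lemmas \ref{pluri and vis subsolutions} and \ref{pluri and vis supersolution}: one needs $u$ (resp.\ $v$) to be a $\theta_t$-psh function with minimal singularities that is continuous in $\Omega_T$ and admits a continuous time derivative, and one needs the pluripotential inequalities to hold on each slice $\Omega_t$. Continuity in $t$ is immediate from the explicit formulas; membership in $PSH(X,\theta_t)$ for each $t$ uses $\theta_t\geq e^{-t}\omega_0$ and $\omega_0\geq 0$ together with $\rho\in PSH(X,\theta)$ and $\phi\in PSH(X,\omega)$; minimal singularities of $u$ follows since $\rho$ and $\f_0$ have at worst the singularities of $V_\theta$ resp.\ are bounded, and of $v$ since $\phi$ is bounded. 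The verification that $\f_0$ may indeed be taken $\omega_0$-psh continuous on $\Amp(K_X)$ is part of the standing hypothesis on the initial datum (it is $\hat\omega = \omega_0+dd^c\f_0$ with $\hat\omega$ K\"ahler, so $\f_0$ is even smooth on $X$). With these points settled the conclusion of Lemma \ref{subsolution 2} follows.
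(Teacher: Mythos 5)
Your construction is correct, and the subsolution half is exactly the paper's argument: the same $\rho$ from \cite{Bou04}, the same ansatz $u=e^{-t}\f_0+(1-e^{-t})\rho-At+f(t)$ with $f'+f=n\log(1-e^{-t})$, the same determinant estimate and choice of $A$, concluding via Lemma \ref{pluri and vis  subsolutions}. For the supersolution you deviate slightly: the paper takes $A>0$ with $\theta\leq A\omega_0$, sets $a(t)=e^{-t}+A(1-e^{-t})$ so that $\theta_t\leq a(t)\omega_0$, solves $(\omega_0+dd^c\psi)^n=c\mu$ and uses the time-dependent barrier $v=a(t)\psi+Bt$, absorbing the factor $a(t)^n$ through the linear term $Bt$ in $\dot v$; you instead freeze a single K\"ahler majorant $\omega\geq\theta_t$ for all $t\in[0,T]$ (e.g. $\omega=\max(1,A)\,\omega_0$), solve the elliptic equation with respect to $\omega$, and take the time-independent $v=\phi+C$ with $C$ large, exactly as in Lemma \ref{supersolution 1}. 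Both routes are valid here because only a fixed finite horizon $T$ is needed and $(\theta_t+dd^cv)_+\leq \omega+dd^c\phi$ (resp. $\leq a(t)(\omega_0+dd^c\psi)$) gives the required upper bound on the Monge--Amp\`ere measure; your version is marginally simpler, while the paper's $a(t)$-scaled barrier is the form that gets reused in the $\e$-superbarrier construction and in the convergence argument (Lemma \ref{lem:converge}), which is presumably why it is set up that way. The regularity points you flag (continuity on $\Omega_T$, continuous $\partial_t$, minimal singularities, smoothness of $\f_0$) are indeed what is needed to invoke Lemmas \ref{pluri and vis  subsolutions} and \ref{pluri and vis supersolution}, and your treatment of them matches the paper's level of detail.
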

\begin{proof}
It follows from \cite{Bou04} that there exists a $\theta$-psh function $\rho$ with analytic singularities  satisfying
$$
\theta+dd^c\rho\geq \epsilon \omega_0  \quad \text{and }\sup_X\rho=0.
$$
We consider $$u(t,x):=e^{-t}\f_0+(1-e^{-t})\rho+f(t) -At,$$
where $f(t)$ is the unique solution of $f'(t)+f(t)=n\log (1-e^{-t})$ and $f(0)=0$. Then $u(0,x)=\f_0(x)$ and $u$ is a $\theta_t$-psh function since 
\begin{eqnarray*}
\theta_t+dd^cu&=&e^{-t}\omega_0+(1-e^{-t})\theta +dd^cu\\
&=& e^{-t}\left(\omega_0+dd^c\f_0\right)+(1-e^{-t})(\theta+dd^c\rho)\geq 0.
\end{eqnarray*}
Therefore
Then  $u$ is continuous on $[0,T)\times  \Omega$ and    $u(0,x)=
 \f_0(x)$ and
\begin{eqnarray*}
(\theta_t+dd^c  u)^n&=&\left( e^{-t}(\omega_0+dd^c\f_0)+(1-e^{-t})(\theta+dd^c\rho)\right)^n\\
&\geq& (1-e^{-t})^n(\theta+dd^c\rho)^n\\
&\geq &e^{n\log (1-e^{-t})} \epsilon^n \omega^n  \\
&=& e^{\partial_t u+u_t -\rho +A  +\log(\epsilon C) }\mu,\\
&\geq & e^{\partial_t u+u_t +A  +\log(\epsilon C) }\mu,
\end{eqnarray*} 
where $C^n\mu\leq \omega^n$. By choosing $A=-\log(\epsilon C)$, we infer that 
$ (\theta_t+dd^cu)^n\geq e^{\dot{u}_t +u_t}\mu$
in the pluripotential sense.  Lemma \ref{pluri and vis  subsolutions} thus  implies that $u$ is a subsolution to $(CP_2)$ in $[0,T)\times\Amp(K_X)$.
\medskip

For supersolution, we suppose that $\theta\leq A\omega_0$ for some $A>0$. Denote $\psi$ is the unique solution of the complex Monge-Amp\`ere equation 
$$(\omega_0+dd^c\psi)^n=c\mu,\quad \min_X \psi =0,  \text{ with } c=\frac{\int_X \omega_0^n}{\int_X \Omega}.$$
Then we set $v= a(t)\psi+Bt$, where  $a(t)= e^{-t}+A(1-e^{-t})$ and $g$ will be chosen hereafter. We have
\begin{eqnarray*}
(\theta_t+dd^cv)^n&\leq&(a(t)\omega_0+dd^cv )^n\\
&=&e^{\log c +n\log a(t)}\Omega
\end{eqnarray*}
  By choosing $$B=\log c+ n\max_{[0,T]} \log a(t),$$ we have   $\log c +n\log a(t) \leq \dot{v}+v$,    Lemma \ref{pluri and vis supersolution} thus  implies that  $v$ is a supersolution to $(CP_2)$. 
\end{proof}
\begin{lem}
Fix $\e>0$. There exist an $\e$-subbarrier and an $\e$-supperbarrier to the Cauchy problem $(CP_2)$.
\end{lem}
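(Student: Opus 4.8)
The plan is to follow the scheme of Proposition~\ref{barrier}, adapted to the time-dependent family $\theta_t=e^{-t}\omega_0+(1-e^{-t})\theta$, and to exploit that here the initial datum $\f_0$ (given by $\hat\omega=\omega_0+dd^c\f_0$, a Kähler form) is smooth on all of $X$.

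\textbf{The $\e$-subbarrier.} The subsolution $u$ constructed in Lemma~\ref{subsolution 2} satisfies $u(0,x)=\f_0(x)$ for every $x$ and is continuous on $[0,T)\times\Amp(K_X)$. Hence $u_*(0,x_0)=\f_0(x_0)\ge\f_0(x_0)-\e$ at every $(0,x_0)\in\{0\}\times\Amp(K_X)$, so $u$ is in fact a subbarrier, in particular an $\e$-subbarrier for every $\e>0$.

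\textbf{The $\e$-superbarrier.} First fix a Kähler form $\omega$ with $\theta_t\le\omega$ for all $t\in[0,T]$; this is possible since $[0,T]$ is compact and $\theta\le A\omega_0$ for some $A>0$, e.g. $\omega:=\max(1,A)\,\omega_0$. As $\f_0$ is smooth and $\omega+dd^c\f_0\ge\omega_0+dd^c\f_0=\hat\omega>0$, the density $M$ of the positive smooth form $(\omega+dd^c\f_0)^n$ with respect to $\mu=e^fdV$ is continuous and positive, so $M_0:=\sup_XM<\infty$. I claim that, for $C\ge\log M_0-\min_X\f_0$, the smooth function $v(t,x):=\f_0(x)+Ct$ is a viscosity supersolution of $(CP_2)$. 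Indeed $v(0,\cdot)=\f_0$ and $v_t\ge\min_X\f_0\ge V_{\theta_t}-C'$ for a suitable $C'$ (recall $V_{\theta_t}\le 0$); and for $(t_0,x_0)\in(0,T)\times\Amp(K_X)$ and any lower test function $q$ for $v$ there, the maximality of $q-v$ at $(t_0,x_0)$ forces $\dot q=\dot v=C$ and $dd^cq\le dd^c\f_0$ at that point, so, using $\theta_{t_0}\le\omega$ and the monotonicity of $H\mapsto(H)_+^n$ on Hermitian forms, $\big(\theta_{t_0}+dd^cq\big)_+^n\le(\omega+dd^c\f_0)^n\le M_0\,\mu\le e^{C+\f_0+Ct_0}\,\mu=e^{\dot q+q}\,\mu$ at $x_0$, which is exactly the supersolution inequality. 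Since $v(0,x_0)=\f_0(x_0)\le\f_0(x_0)+\e$, the function $v$ is a (even $0$-, hence $\e$-) superbarrier at every $(0,x_0)\in\{0\}\times\Amp(K_X)$.

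\textbf{Obstacle and robustification.} The point I expect to be delicate is the time-dependence and degeneration of $\alpha_t=\{\theta_t\}$, which moves from the Kähler class $\{\omega_0\}$ to the merely big class $c_1(K_X)$ as $t$ grows. This is why no single fixed Kähler potential works for all $t$, and why the argument is organized around the uniform bound $\theta_t\le\omega$ together with the positive-part convention $(\cdot)_+^n$: this way one never needs $v_t$ to be $\theta_t$-psh (which would fail, since $\theta+dd^c\f_0$ is not semipositive in general), and in particular one does not invoke Lemma~\ref{pluri and vis supersolution} here; one only needs the above estimates to be uniform in $t\in[0,T]$, which holds by compactness of $[0,T]$. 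If one prefers an argument strictly parallel to Proposition~\ref{barrier} (and wants to allow $\f_0$ merely continuous on $X$), one replaces $\f_0$ by smooth $\omega$-psh regularizations $\f_j\searrow\f_0$, sets $v_j:=\f_j+C_jt$, picks $j$ with $\f_j(x_0)\le\f_0(x_0)+\e$, and globalizes by taking $v_\e:=\inf\{v_j,\,v+A_2t\}$ with $v$ the supersolution of Lemma~\ref{subsolution 2}.
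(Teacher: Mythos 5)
Your argument is correct, and it differs from the paper's in the superbarrier half. The subbarrier part is identical: the subsolution of Lemma \ref{subsolution 2} starts exactly at $\f_0$, hence is a subbarrier. For the superbarrier, the paper repeats the scheme of Proposition \ref{barrier}: it approximates $\f_0$ from above by envelopes $\f_j=P(h_j)$ of smooth $h_j\searrow\f_0$, uses $(\omega_0+dd^c\f_j)^n=\mathbbm{1}_{\{\f_j=h_j\}}f_j\mu$ with bounded density, and takes $v=a(t)\f_j+Bt$ with $a(t)=e^{-t}+A(1-e^{-t})$ so that $\theta_t+dd^cv\le a(t)(\omega_0+dd^c\f_j)$, concluding via Lemma \ref{pluri and vis supersolution}; this only yields an $\e$-superbarrier and is the route one must take when $\f_0$ is merely continuous (as in $(CP_1)$). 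You instead exploit that for $(CP_2)$ the initial potential is smooth with $\omega_0+dd^c\f_0=\hat\omega>0$, so the explicit function $\f_0+Ct$ is a supersolution, checked directly from the definition using a uniform bound $\theta_t\le\omega$ and the monotonicity of $H\mapsto (H)_+^n$; this produces an exact ($0$-)superbarrier at every point, dispenses with the envelope machinery and with Lemma \ref{pluri and vis supersolution}, and, as you note, never requires $v_t$ to be $\theta_t$-psh. Your fallback paragraph is essentially the paper's proof, so nothing is lost in generality. Two harmless points to tighten: take $C\ge\max\bigl(0,\log M_0-\min_X\f_0\bigr)$ so that the term $Ct_0$ in $e^{C+\f_0+Ct_0}$ is nonnegative (with $C<0$ the displayed inequality could fail for $t_0$ close to $T$); and your verification of the differential inequality is at interior times $t_0\in(0,T)$, with the initial time handled by the boundary condition $v(0,\cdot)=\f_0$ --- this is the same convention the paper uses implicitly, since no reasonable function satisfies the supersolution test at $t_0=0$ against lower tests with arbitrarily negative time derivative.
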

\begin{proof}
Observe that for any $\e>0$, the subsolution $u(t,x)$  in Lemma \ref{subsolution 2} is also a $\e$-subsolution since $u(0,x)=\f_0$.  

\medskip
For the $\e$-supperbarrier at $(t_0,x_0)\in \{t_0\}\times \Amp(K_X)$, we use the same argument in Proposition \ref{barrier}.  Approximate $\f_0$ by a decreasing sequence $(h_j)$ of smooth functions. Denote  $\f_j=P(h_j)$ the envelope of $h_j$ then  $\f_j\searrow \f_0$ and $$(\omega_0+dd^c\f_j)^n=\mathbbm{1}_{\{\f_j=h_j\}} (\omega_0+dd^c h_j)^n=\mathbbm{1}_{\{\f_j=h_j\}} f_j\mu,$$
where $f_j\geq 0$ is bounded.  Thus there exists a function $\f_j$ such that 
$$ \f_0 \leq \f_j \leq\f_0 +\e.$$

Define $v(t,x)=a(t)\f_j+Bt$ with $a(t)$ as in Lemma \ref{subsolution 2}, then
\begin{eqnarray*}
(\theta_t+dd^cv)^n&\leq&(a(t)\omega_0+dd^cv )^n\\
&=&\mathbbm{1}_{\{\f_j=h_j\}}f_je^{n\log a(t) }\mu\\
&\leq& \mathbbm{1}_{\{\f_j=h_j\}}f_j e^{(A-1)\f_j+n\log a(t)-B} e^{\partial_t v+v}\mu.
\end{eqnarray*}
Since $h_j$ is smooth on $X$ and $f_j$ is bounded on $X$, there is a constant $C_j$ such that 
$$\mathbbm{1}_{\{\f_j=h_j\}}f_je^{(A-1)h_j}\leq e^{C_j}.$$
By choosing $B=\max_{[0,T]}n\log a(t)+C_j$, we get 
$$(\theta_t+dd^cv)^n \leq e^{\partial_tv_t+v_t}\mu.$$
Since $\f_j$ is continuous on $X$, so is $\dot{v}+v$, Lemma \ref{pluri and vis supersolution} infers that $ v$ is a $\e$-superbarrier to the Cauchy problem $(CP_2)$ in $[0,T)\times \Amp(K_X)$. 
\end{proof}

We now obtain the solution of $(CP_2)$ using the Perron envelope as in Theorem \ref{thm:envelope}.
\begin{thm}\label{existence of KRF}
For any $T>0$, there exists a unique viscosity solution to the Cauchy problem $(CP_2)$ on $[0,T)\times X$. As consequence, the normalized K\"ahler-Ricci flow exists for all time  in the viscosity sense.
\end{thm}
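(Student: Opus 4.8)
The plan is to run the Perron-envelope argument of Theorem \ref{thm:envelope}, now fed by the subsolution, the supersolution and the $\e$-barriers of the two preceding lemmas, and closed by the comparison principle in the form adapted to the K\"ahler-Ricci flow. Fix $T>0$ and write $\Omega:=\Amp(K_X)$. First I would note that the viscosity framework of Section \ref{sec:comparison_principle} applies on $\Omega_T=[0,T)\times\Omega$ with ambient locus $\Omega$: adding $e^{-t}\omega_0$ to a K\"ahler current with analytic singularities in $c_1(K_X)$ that is smooth on $\Omega$ produces a K\"ahler current in $\alpha_t$ with the same singular set, so each $\alpha_t$ is big and $\Amp(\alpha_t)\supseteq\Omega$ for all $t\in[0,T)$. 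Let $u\le v$ be the subsolution and supersolution of $(CP_2)$ on $\Omega_T$ from Lemma \ref{subsolution 2}, normalised by an additive constant so that $u\le v$, and set
$$\Phi:=\sup\{\,w\ :\ w\text{ is a subsolution of }(CP_2)\text{ on }X_T,\ u\le w\le v\,\}.$$
By the standard viscosity machinery (Ishii's lemma and the stability/regularisation arguments of \cite{CIL92,EGZ16,EGZ18}, exactly as in the proof of Theorem \ref{thm:envelope}), the upper semicontinuous envelope $\Phi^{*}$ is a subsolution, and the lower semicontinuous envelope $\Phi_{*}$ a supersolution, of the flow $(\theta_t+dd^c\f_t)^n=e^{\dot\f_t+\f_t}\mu$ on $(0,T)\times\Omega$.

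Next I would use the barriers to identify the Cauchy data. Since $\Phi\ge u$ and $u$ is continuous on $[0,T)\times\Omega$ with $u(0,\cdot)=\f_0$, one has $\Phi_{*}(0,\cdot)\ge\f_0$, so $\Phi_{*}$ is a supersolution of $(CP_2)$. Conversely, fix $\e>0$ and $x_0\in\Omega$ and let $v_\e$ be an $\e$-superbarrier of $(CP_2)$ at $(0,x_0)$ as in the preceding lemma; applying the comparison principle to each competitor $w$ in the family defining $\Phi$ gives $w\le v_\e$ on $\Omega_T$, hence $\Phi\le v_\e$, hence $\Phi^{*}(0,x_0)\le v_\e^{*}(0,x_0)\le\f_0(x_0)+\e$. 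Letting $\e\downarrow0$ yields $\Phi^{*}(0,\cdot)\le\f_0$, so $\Phi^{*}$ is a subsolution of $(CP_2)$.

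Finally I would close the argument with the comparison principle. Writing $\theta_t=e^{-t}\omega_0+(1-e^{-t})\theta$ and choosing $\delta>0$ small enough that $\delta\theta\le e^{-T}\omega_0$ (possible since $\theta\le C\omega_0$ for some $C$), the function $g(t):=1-e^{-t}+\delta$ is smooth, positive on $[0,T]$ with $g'=e^{-t}>0$, and $\theta_t-g(t)\theta=e^{-t}\omega_0-\delta\theta\ge0$; so Corollary \ref{cor:comparison} applies with this $g$ and the fixed big class $\{\theta\}=c_1(K_X)$ and yields $\Phi^{*}\le\Phi_{*}$ on $\Omega_T$. Since $\Phi_{*}\le\Phi\le\Phi^{*}$ always, we get $\Phi^{*}=\Phi=\Phi_{*}$: thus $\Phi$ is continuous on $\Omega_T$ and is the viscosity solution of $(CP_2)$ there, and uniqueness follows by applying the same comparison principle to any two solutions. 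As $T>0$ was arbitrary and each $[0,T)$-solution is unique, the solutions for different values of $T$ agree on their common domain and glue to a single viscosity solution $\f$ on $[0,\infty)\times\Omega$; setting $\omega_t:=\theta_t+dd^c\f_t$ gives the weak normalized K\"ahler-Ricci flow for all time. I expect the comparison step to be the crux: because $c_1(K_X)$ is big but not semipositive one cannot invoke \cite{EGZ16,EGZ18} directly, so everything hinges on Theorem \ref{comparison_strict_positive}/Corollary \ref{cor:comparison}---in particular on the concavity of $\log\det$, which removes the need for any a priori control of $\partial_t$ of the subsolution---and on checking that its hypotheses ($\theta_t\ge g(t)\theta$ with $\{\theta\}$ big and $\Amp(\alpha_t)\supseteq\Omega$) hold uniformly on $[0,T)$.
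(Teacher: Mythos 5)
Your proposal is correct and follows essentially the same route as the paper: the Perron envelope of Theorem \ref{thm:envelope}, fed by the sub/supersolution of Lemma \ref{subsolution 2} and the $\e$-barriers, and closed by the comparison principle adapted to $\theta_t$ via Corollary \ref{cor:comparison}. Your explicit choice $g(t)=1-e^{-t}+\delta$ with $\delta\theta\le e^{-T}\omega_0$ (keeping $g>0$ at $t=0$ while preserving $\theta_t\ge g(t)\theta$) is a useful detail that the paper leaves implicit.
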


\begin{proof}
We consider the
 upper envelope
$$\f:=\sup\{w, w \text{ is a subsolution of }(CP_2),u\leq w\leq v \},$$
where $u$ (reps. $v$) is the subsolution (reps. supersolution)  of $(CP_2)$ contructed above. 
Let $\f^*$ (resp. $\f_*$) be the upper (resp. lower) semi-continuous envelope for $\f$ in $[0,T)\times\Amp(K_X)$, and  set $\f^*(t,x)=\f_*(t,x)=\f(t,x)$,   $\forall(t,x)\in [0,T)\times (X\setminus \Amp(K_X))$. 

\medskip
Then we have $\f^*(x)$ is a viscosity subsolution to the equation
\begin{equation}\label{eq:MAF_KFR}
(\theta_t+dd^c\f_t)^n=e^{\partial_t\f_t+\f_t}\mu
\end{equation}
on $[0,T)\times X$. In addition,  it follows from the bump construction (cf. \cite{CIL92,EGZ11}) that $\f_*$ satisfies the viscosity inequality in Definition \ref{def:super}. 

\medskip
To  see that $\f_*$ is a supersolution of \eqref{eq:MAF_KFR}, we need to prove further that $ \f(t,x)\geq V_{\theta_t}(x)-C(t), \forall t\in [0,T)$, for some time-dependent constant  $C(t)$ (cf. Definition \ref{def:super}). It is sufficient to find a subsolution $\psi_t$ of $(CP_2)$ such that  $\psi_t\geq V_{\theta_t}(x)-C(t), \forall t\in [0,T)$. This  is straightforward in Theorem \ref{thm:envelope} when $\theta_t$ is independent of $t$, but not trivial when $\theta_t$ depends on $t$. We would like to thank Hoang-Chinh Lu for pointing out this missing argument in our last version. We now give such supersolution in our case when $\theta_t=e^{-t}\omega_0+ (1-e^{-t})\theta $. 

\medskip
We first remark that $$\chi_t:=\frac{\theta_t}{1-e^{-t}}= \theta+ \frac{\omega_0}{e^t-1}$$  is decreasing in $t$.
%

\medskip
 Let $\phi_t(x)$ be the unique elliptic viscosity solution of 
\begin{equation}
(\chi_t+dd^c\phi_t)^n =e^{\phi_t}\mu,
\end{equation}
for each $t\in [0,T)$.
Since $t\mapsto \chi_t$ is decreasing, for $s\leq t$ we have 
$$(\chi_s+ dd^c \phi_t)^n\geq (\chi_t+ dd^c\phi_t)^n =e^{\phi_t}\mu .$$
Therefore $\phi_t$ is a viscosity subsolution to $(\chi_s+dd^c \phi_s)^n =e^{\phi_s}\mu$. It follows from the comparison principle (cf. Theorem \ref{CP})  that $\phi_t\leq \phi_s$ in $\Amp(\{\chi_s\})\supset \Amp(K_X)$.  Hence  $t\mapsto \phi_t$ is decreasing on $[0,T)\times\Amp(K_X)$. 

\medskip
Set $\psi_t:=(1-e^{-t})\phi_t + f(t)+A$, where $A= \min\{\inf_X \f_0, 0\}$, and  $f$ satisfies the ODE: $f'(t)+f(t)= n\log (1-e^{-t})$ and  $f(0)=0$.  Then we have
\begin{eqnarray*}
\partial_t \psi_t +\psi_t &=& (1-e^{-t})\partial_t\phi_t  + e^{-t}\phi_t +(1-e^{-t})\phi_t+ n\log (1-e^{-t}) +A\\
&=&(1-e^{-t})\partial_t\phi_t  +\phi_t+ n\log (1-e^{-t}) +A
\end{eqnarray*}
Since  $t\mapsto \phi_t$ is decreasing on  $[0,T)\times\Amp(K_X)$ and $A\leq 0$, we infer that
\begin{eqnarray*}
(\theta_t+dd^c\psi_t)^n &=& (1-e^{-t})^n (\chi_t+dd^c\phi_t)^n  \\
&=& e^{\phi_t+n\log(1-e^{-t})}\mu\\
&\geq & e^{(1-e^{-t})\partial_t\phi_t + \phi_t +n\log(1-e^{-t})+A}\mu\\
&= &e^{\partial_t\psi_t + \psi_t }\mu
\end{eqnarray*}
in viscosity sense.
This follows that $\psi_t$ is a viscosity subsolution to $(CP_2)$. Since $\phi_t\geq V_{\chi_t}-C(t)$ for $t\in [0,T)$, we have  $\psi_t\geq V_{\theta_t}-C'(t)$ for  $t\in [0,T)$ as required.

\medskip
Finally, as in the proof of Theorem \ref{thm:envelope}, we use the $\e$-sub/supper-barriers constructed above to prove that $\f^*$ (resp. $\f_*$) is the subsolution (resp. suppersolution) to $(CP_2)$.  Then the comparison principle (Corollary \ref{cor:comparison})  implies that $\f^*=\f_*=\f$ in $[0,T)\times \Amp (K_X)$, hence $\f$ is a viscosity solution to $(CP_2)$. The uniqueness again follows from the comparison principle (Corollary \ref{cor:comparison}).
\end{proof}
\subsection{Convergence of the weak normalized K\"ahler-Ricci flow}
We now study the long-time behavior of the normalized K\"ahler-Ricci flow on compact K\"ahler manifolds of general type. Precisely we prove that the normalized K\"ahler-Ricci flow continuously deforms any initial K\"ahler form $\omega_0$ towards the unique singular K\"ahler-Einstein metric $\omega_{KE}=\theta+dd^c\f_{KE}$ in the canonical class $K_X$, with 
\begin{equation}
(\theta+dd^c\f_{KE})^n=e^{\f_{KE}}\mu
\end{equation}
(we refer to \cite{EGZ09} and \cite{BEGZ} for the construction of $\omega_{KE}$). 
\begin{thm}\label{convergence of KRF}
The viscosity  solution of  the Monge-Amp\`ere flow
$$\begin{cases}
(\theta_t+dd^c\f_t)^n=e^{\dot{\f_t}+\f_t}\mu\\
\f(0,x)=\f_0
\end{cases}
$$
 converges,  as $t\rightarrow +\infty$, locally uniformly on $\Amp(K_X)$ to the unique solution $\f_{KE}$ of the Monge-Amp\`ere equation
$$(\theta+dd^c\f_{KE})^n=e^{\f_{KE}}\mu.$$
\end{thm}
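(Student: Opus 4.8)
The plan is to trap the viscosity solution $\f_t$ of $(CP_2)$ between a subsolution and a supersolution of $(CP_2)$, each converging to $\f_{KE}$ as $t\to+\infty$, and then conclude by the comparison principle for $(CP_2)$ (Theorem~\ref{comparison_strict_positive}, in the form adapted to the K\"ahler--Ricci flow already used in Theorem~\ref{existence of KRF}; see also Corollary~\ref{cor:comparison}). Throughout write $\Omega:=\Amp(K_X)$ and $\omega_{KE}:=\theta+dd^c\f_{KE}\ge0$.

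\medskip
\emph{Lower bound.} Mimicking the proof of Theorem~\ref{convergence_1}, I would take $u(t,x):=e^{-t}\f_0(x)+(1-e^{-t})\f_{KE}(x)+f(t)$, where $f$ solves $f'+f=n\log(1-e^{-t})$ with $f(0)=0$, so that $f(t)=O(te^{-t})\to0$. Since $\theta_t+dd^cu=e^{-t}(\omega_0+dd^c\f_0)+(1-e^{-t})\omega_{KE}$ is the sum of a K\"ahler form and the semipositive K\"ahler--Einstein current, monotonicity of the non-pluripolar Monge--Amp\`ere product gives $(\theta_t+dd^cu)^n\ge(1-e^{-t})^n\omega_{KE}^n=e^{\dot u+u}\mu$ in the pluripotential sense; moreover $u\le V_{\theta_t}+C$ and $u$ is continuous on $\Omega_T$, so by Lemma~\ref{pluri and vis subsolutions} $u$ is a viscosity subsolution of $(CP_2)$ with $u(0,\cdot)=\f_0$. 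The comparison principle then yields $u\le\f$, i.e. $\f_t-\f_{KE}\ge e^{-t}(\f_0-\f_{KE})+f(t)$, whence $\liminf_{t\to\infty}(\f_t-\f_{KE})\ge0$ locally uniformly on $\Omega$, $\f_0$ and $\f_{KE}$ being locally bounded there.

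\medskip
\emph{Upper bound.} This is the crux: the supersolution $Ae^{-t}+\f_{KE}$ that works in the autonomous case (Theorem~\ref{convergence_1}) is not admissible here because $\theta_t$ is only \emph{asymptotic} to $\theta$, so I would borrow a supersolution from a slightly larger big class. Fix $M>0$ with $\theta\ge-M\omega_0$ and, for $\delta>0$, set $\theta_\delta:=\theta+\delta\omega_0$; this is a smooth representative of a big class, $\Amp(\{\theta_\delta\})\supseteq\Omega$, and an elementary estimate gives $\theta_\delta\ge\theta_t$ for all $t\ge T_\delta:=\log\!\big((1+M)/\delta\big)$. By Theorem~\ref{thm_Cauchy_1} the (time--shifted) Cauchy problem $(\theta_\delta+dd^c\Psi^\delta_t)^n=e^{\dot\Psi^\delta_t+\Psi^\delta_t}\mu$ on $[T_\delta,\infty)\times X$ with initial datum $V_{\theta_\delta}+C$ (with $C$ large enough that $V_{\theta_\delta}+C\ge\f_{T_\delta}$ on $X$, which is possible since $\f_{T_\delta}\le V_{\theta_{T_\delta}}+C'\le V_{\theta_\delta}+C'$) has a unique viscosity solution $\Psi^\delta$, and by Theorem~\ref{convergence_1} $\Psi^\delta_t\to\f^\delta_{KE}$ locally uniformly on $\Amp(\{\theta_\delta\})\supseteq\Omega$, where $\f^\delta_{KE}$ solves $(\theta_\delta+dd^c\phi)^n=e^{\phi}\mu$. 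Since $\theta_t\le\theta_\delta$ for $t\ge T_\delta$, checking lower test functions and using monotonicity of the determinant on semipositive forms shows that $\Psi^\delta$ is a viscosity supersolution of $(CP_2)$ on $[T_\delta,\infty)\times\Omega$; as $\Psi^\delta_{T_\delta}\ge\f_{T_\delta}$ on $X$, the comparison principle gives $\f_t\le\Psi^\delta_t$ there, hence $\limsup_{t\to\infty}\f_t\le\f^\delta_{KE}$ locally uniformly on $\Omega$.

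\medskip
\emph{Conclusion and main obstacle.} Finally I would let $\delta\downarrow0$. Viewing $\f^{\delta'}_{KE}$ (resp. $\f_{KE}$) as $\theta_\delta$-psh for $\delta'<\delta$, it is a subsolution of the $\theta_\delta$-equation, so $\f_{KE}\le\f^{\delta'}_{KE}\le\f^\delta_{KE}$ and $\f^\delta_{KE}$ decreases to a limit as $\delta\downarrow0$; passing to the limit in $(\theta_\delta+dd^c\f^\delta_{KE})^n=e^{\f^\delta_{KE}}\mu$ by continuity of the non-pluripolar product along decreasing sequences with minimal singularities (with $\theta_\delta\downarrow\theta$, a standard stability argument, cf. \cite{BEGZ}) and invoking the uniqueness part of Theorem~\ref{thm:BEGZ2}, the limit is $\f_{KE}$, and the convergence $\f^\delta_{KE}\to\f_{KE}$ is then locally uniform on $\Omega$ by Dini's theorem. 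Combining the two bounds gives $\f_t\to\f_{KE}$ locally uniformly on $\Omega=\Amp(K_X)$ (an exponential rate follows by coupling $\delta=\delta(t)\to0$). I expect the supersolution step to be the main difficulty: one is forced to leave the class $c_1(K_X)$; one must verify the supersolution property of $\Psi^\delta$ for the genuinely $t$-dependent family $\theta_t$, which is also where the comparison principle has to be handled with care; and one must control the dependence $\delta\mapsto\f^\delta_{KE}$ near $\delta=0$.
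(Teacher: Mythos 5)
Your proposal is correct in outline, and the lower bound is essentially identical to the paper's: the same barrier $e^{-t}\f_0+(1-e^{-t})\f_{KE}+f(t)$ with $f'+f=n\log(1-e^{-t})$, Lemma \ref{pluri and vis subsolutions}, and the comparison principle. For the upper bound, however, you take a genuinely different route. The paper never leaves (multiples of) the canonical class: it shows that $\f_t-Be^{-t}+Ce^{-t}\rho$ (with $\rho$ as in Lemma \ref{rho}, so that $Ce^{-t}(\theta+dd^c\rho)$ absorbs $e^{-t}\omega_0$) is a \emph{subsolution} of the auxiliary flow $\bigl(a(t)\theta+dd^c\phi_t\bigr)^n=e^{\dot\phi_t+\phi_t}\mu$ with $a(t)=1+(C-1)e^{-t}$, and then proves (Lemma \ref{lem:converge}) that this auxiliary flow converges to $\f_{KE}$ by the rescaling $\tilde\phi_t=a(t)^{-1}\phi_t-b(t)$ and a reparametrization of time reducing it to the autonomous flow of Section \ref{Cauchy_pro_sect}; the limit is therefore $\f_{KE}$ on the nose and no perturbation of the class is needed. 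You instead dominate $\theta_t$ by the fixed form $\theta_\delta=\theta+\delta\omega_0$ for $t\geq T_\delta$, run the autonomous flow of Theorems \ref{thm_Cauchy_1} and \ref{convergence_1} in the enlarged class, transfer the supersolution property to $(CP_2)$ (correct: for a lower test $q$, either $\theta_{t_0}+dd^cq$ is not semipositive and $(\theta_{t_0}+dd^cq)_+^n=0$, or $0\leq\theta_{t_0}+dd^cq\leq\theta_\delta+dd^cq$ and monotonicity of the determinant applies), compare, and finally let $\delta\downarrow0$. This buys you a conceptually simple supersolution and avoids the change-of-variables lemma, but it costs an elliptic stability statement $\f^\delta_{KE}\downarrow\f_{KE}$ which is nowhere in the paper and which you only sketch; it is true and provable, either by a BEGZ-type argument or, more in the spirit of this paper, by the barrier trick: with $\theta+dd^c\rho\geq\epsilon\omega_0$ and $s=\delta/(\epsilon+\delta)$, the function $(1-s)\f^\delta_{KE}+s\rho+n\log(1-s)-Ms$ is a subsolution of $(\theta+dd^c\phi)^n=e^{\phi}\mu$, so it lies below $\f_{KE}$, which combined with the obvious inequality $\f^\delta_{KE}\geq\f_{KE}$ gives local uniform convergence on $\Amp(K_X)$ as $\delta\to0$. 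You should also make this last step, and the comparison on $[T_\delta,\infty)$ (which, as in the paper, really goes through Corollary \ref{cor:comparison} since $\theta_t\geq g(t)\theta$ for a suitable increasing $g$, not $\theta_t\geq\theta$ itself), explicit; with those details filled in, your argument gives the same locally uniform convergence as Theorem \ref{convergence of KRF}.
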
 
\begin{proof}
Set
$$u(t,x)=e^{-t}\f_0+(1-e^{-t})\f_{KE}+f(t)$$
where  $f(t)=O(te^{-t})$  is the unique solution of the ODE $f'(t)+f(t)=n\log(1-e^{-t})$ and $f(0)=0$ (cf. Lemma \ref{lem:ODE}). 
Then $u$ is a $\theta_t$-psh function and
\begin{eqnarray*}
(\theta_t+dd^c  u)^n&=&\left( e^{-t}(\omega_0+dd^c\f_0)+(1-e^{-t})(\theta+dd^c\f_{KE})\right)^n\\
&\geq& (1-e^{-t})^n(\theta+dd^c\f_{KE})^n\\
&\geq &e^{n\log (1-e^{-t}) +\f_{KE}} \mu \\
&= &e^{\dot{u}+u} \mu.
\end{eqnarray*}
Lemma \ref{pluri and vis  subsolutions} implies that  $u$ is a viscosity subsolution.  
It follows from   Theorem \ref{comparison_strict_positive}  that 
\begin{equation}
\f_t\geq u\quad \text{on}\,\, [0,+\infty)\times \Amp(K_X),
\end{equation}
 hence
 \begin{equation}\label{lower_bound}
 \f_t-\f_{KE}\geq e^{-t}(\f_0-\f_{KE})+f(t),
 \end{equation}
on $[0,\infty)\times \Amp(K_X)$ with $f(t)=O(te^{-t})$.

\medskip
For the upper bound of $\f_t-\f_{KE}$ we need to use the following lemma 

\begin{lem}\label{lem:converge}
There exists a unique viscosity solution $\phi_t$ for the following flow
\begin{equation}\label{CMAF_2}
\begin{cases}
\big(a(t)\theta+dd^c\phi_t\big)^n=e^{\partial_t\phi_t+\phi_t}\mu\\
\phi(0,x)=\phi_0
\end{cases}
\end{equation}
for any $\phi_0\in PSH(X,(1+A)\theta)\cap C^0(\Amp(K_X))$ with minimal singularities,
where $a(t)=1+Ae^{-t}$ with $A\geq 0$. Moreover, the flow converges to $\f_{KE}$, locally uniformly on $\Amp(K_X)$  as $t\rightarrow +\infty$. 
\end{lem}
\begin{proof}
Observe that
\begin{eqnarray}
\big(a(t)\theta+dd^c\phi_t\big)^n=a^n(t)\big(\theta+dd^ca(t)^{-1}\phi_t\big)^n .
\end{eqnarray}
By setting $\tilde{\phi}_t=a(t)^{-1}\phi_t -b(t)$, with $b(t)=a(t)^{-1}v(t)$ where $v=O(te^{-t})$ is the unique solution of  the ODE
$$v'+v=n\log a(t)\quad \text{and }\, v(0)=0.$$
We now can rewrite (\ref{CMAF_2}) to the flow  
\begin{equation}\label{CMAF_3}
\begin{cases}
\big(\theta+dd^c\tilde\phi_t\big)^n=e^{a(t)\partial_t\tilde\phi_t+\tilde\phi_t}\mu\\
\tilde \phi(0,x)=\phi_0
\end{cases}.
\end{equation}
Finally, by changing of the time variable $\psi(t,x)=\tilde{\phi}(h(t),x)$ where $h(t)$ is the unique solution of  the ODE
$$ h'(t)=a(h(t)) \quad\text{and }\, h(0)=0.$$ 
Then the equation (\ref{CMAF_2}) can be rewritten 
as
\begin{equation}\label{CMAF_4}
\begin{cases}
(\theta+dd^c \psi_t)^n=e^{\partial_t\psi+\psi_t}\mu\\
\phi(0,x)=\phi_0,
\end{cases}
\end{equation}
which is the flow we studied in Section \ref{Cauchy_pro_sect}.  Since  $b(t)\rightarrow 0$ and $h(t)\rightarrow +\infty$ as $t\rightarrow \infty$ the convergence is followed from  Theorem \ref{convergence_1}.
 \end{proof}
Since $K_X$ is big there exists a $\theta$-psh function $\rho$ with analytic singularities  satisfying
\begin{equation}\label{big_character}
\theta+dd^c\rho\geq C^{-1} \omega_0  \quad \text{and }\sup_X\rho=0,
\end{equation}
for some $C>0$ (cf. \cite{Bou04}). We can assume  further that $C\geq 1$. 

\medskip
Now set $u(t,x)=\f_t-Be^{-t}+Ce^{-t}\rho,$ and $a(t)=1+(C-1)e^{-t}$.
Using (\ref{big_character}) we have
\begin{eqnarray*}
a(t)\theta+dd^cu&=&Ce^{-t}\theta+(1-e^{-t})\theta+dd^c u\\
&\geq& e^{-t}(\omega_0-Cdd^c \rho)+(1-e^{-t})\theta+dd^c u\\
&=&\theta_t+dd^c\f_t,
\end{eqnarray*}
hence 
$$ (a(t)\theta+dd^cu)^n\geq e^{\partial_t u+u}\mu$$
in the viscosity sense.  Fix  $\phi_0$ a  $C\theta$-psh function with minimal singularities, then $ \phi_0-C\rho$ is bounded from below.  Therefore we can choose $B>0$ such that  $\phi_0 -C\rho \geq  \f_0   - B $. 
This implies that $u$ is a subsolution of the Cauchy problem (\ref{CMAF_2}). Since the flow (\ref{CMAF_2}) can be written as the flow (\ref{CMAF_4}) after changing of time variable,  the comparison principle also holds  for the flow (\ref{CMAF_2}). Therefore  we get 
$$u\leq \phi_t $$ on $ [0,\infty)\times \Amp(K_X)$.
  Combining with  (\ref{lower_bound})  and Lemma \ref{lem:converge}, we imply that  $\f_t$ converges to $\f_{KE}$ on $\Amp(K_X)$. 
\end{proof}

\end{document}